\providecommand{\U}[1]{\protect\rule{.1in}{.1in}}
\newtheorem{theorem}{Theorem}
\theoremstyle{plain}
\newtheorem{corollary}{Corollary}
\newtheorem{lemma}{Lemma}
\newtheorem{proposition}{Proposition}
\newtheorem{remark}{Remark}
\numberwithin{equation}{section}
\begin{document}
\title{Krein's strings whose spectral functions are of polynomial growth}
\author{S.Kotani}
\address{Kwansei Gakuin University}
\email{kotani@kwansei.ac.jp}
\subjclass[2000]{Primary 34L05, 34B20; Secondary 60J60, 60J55}
\keywords{generalized diffusion process, Krein's correspondence, inverse spectral problem}

\begin{abstract}
In the case of Krein's strings with spectral functions of polynomial growth a
necessary and sufficient condition for the Krein's correspondence to be
continuous is given.

\end{abstract}
\maketitle

\section{Introduction}

Let $\mathcal{M}$ be the totality of non-decreasing, right continuous
functions on $[0,\infty)$ satisfying%
\[
m(0-)=0,\text{ \ }m(x)\leq\infty,
\]
and set%
\[
\left\{
\begin{array}
[c]{l}%
l=\inf\left\{  x\geq0;\text{ }m(x)=\infty\right\}  ,\smallskip\\
a=\inf\left\{  x\geq0;\text{ }m(x)>0\right\}  .
\end{array}
\right.
\]
For $m\in\mathcal{M}$ denote $\varphi_{\lambda}\left(  x\right)
,\psi_{\lambda}\left(  x\right)  $ the solutions to%
\[
\left\{
\begin{array}
[c]{l}%
\varphi_{\lambda}\left(  x\right)  =1-\lambda%
{\displaystyle\int\nolimits_{0}^{x}}
\left(  x-y\right)  \varphi_{\lambda}\left(  y\right)  dm(y),\smallskip\\
\psi_{\lambda}\left(  x\right)  =x-\lambda%
{\displaystyle\int\nolimits_{0}^{x}}
\left(  x-y\right)  \psi_{\lambda}\left(  y\right)  dm(y),
\end{array}
\right.
\]
and define%
\[
h\left(  \lambda\right)  =\lim_{x\rightarrow l}\frac{\psi_{\lambda}\left(
x\right)  }{\varphi_{\lambda}\left(  x\right)  }=\int\nolimits_{0}^{l}%
\varphi_{\lambda}\left(  x\right)  ^{-2}dx.
\]
Then it is known that there exists a unique measure $\sigma$ on $[0,\infty)$
satisfying%
\[
h\left(  \lambda\right)  =a+\int\nolimits_{0}^{\infty}\frac{1}{\xi-\lambda
}d\sigma\left(  \xi\right)  ,
\]
and conversely, $h$ determines $m$ uniquely. Conventionally it is understood
that for $m\in\mathcal{M}$ taking $\infty$ identically on $[0,\infty)$ the $h$
vanishing identically corresponds, and for $m\in\mathcal{M}$ vanishing
identically on $[0,\infty)$ the $h$ taking identically $\infty$ corresponds.
This is the theorem obtained by Krein\cite{kr1} and $m$ is called Krein's
(regular) string. Later Kasahara\cite{ka} established the continuity for the
correspondence and applied it to show limit theorems for 1D diffusion
processes with $m$ their speed measures. Recently Kotani\cite{k3} extended
Kasahara's result to a certain kind of singular strings $m,$ namely to $m$
which is a non-decreasing and right continuous function on $\left(
-\infty,\infty\right)  $ satisfying%
\[
m(-\infty)=0,\text{ \ }m(x)\leq\infty,
\]
and%
\begin{equation}
\int\nolimits_{-\infty}^{a}x^{2}dm(x)<\infty\label{14}%
\end{equation}
for some $a$. When the condition (\ref{14}) is satisfied, the boundary
$-\infty$ is called as the limit circle type for the associated generalized
second order differential operator $d^{2}/dmdx.$ In this case he introduced a
new $h$ by%
\[
h\left(  \lambda\right)  =\lim_{x\rightarrow-\infty}\left(  x+\varphi
_{\lambda}(x)%
{\displaystyle\int\nolimits_{x}^{l}}
\frac{dy}{\varphi_{\lambda}(y)^{2}}\right)  =a+\int\nolimits_{0}^{\infty
}\left(  \frac{1}{\xi-\lambda}-\frac{\xi}{\xi^{2}+1}\right)  d\sigma\left(
\xi\right)  ,
\]
which satisfies%
\[
h^{\prime}\left(  \lambda\right)  =%
{\displaystyle\int\nolimits_{-\infty}^{l}}
\frac{\partial}{\partial\lambda}\varphi_{\lambda}\left(  x\right)  ^{-2}dx,
\]
and proved the continuity of the correspondence between $m$ and $h.$
Probabilistic applications of this result were given by
Kasahara-Watanabe\cite{k-w,k-w2} and it was interpreted from the point of view
of the excursion theory by Yano\cite{yano}. In this article we consider $m$
satisfying a milder condition than (\ref{14}), namely%
\[
\int\nolimits_{-\infty}^{a}\left\vert x\right\vert dm(x)<\infty,
\]
and obtain the continuity result under additional conditions on $m,$ which
allows any power growth of the spectral measures at $\infty.$

\section{Preliminaries}

Let $m(x)$ be a non-decreasing and right continuous function on $\left(
-\infty,\infty\right)  $ satisfying%
\[
m(-\infty)=0,\text{ \ \ \ }m(\infty)\leq\infty.
\]
Set%
\[
l=\sup\left\{  x>-\infty,\text{ }m(x)<+\infty\right\}  ,\text{ \ }l_{+}%
=\sup\text{\textrm{supp}}dm,\text{ \ }l_{-}=\inf\text{\textrm{supp}}dm.
\]
Note $m(l)=\infty$ if $l<\infty.$ Assume%

\begin{equation}%
{\displaystyle\int\nolimits_{-\infty}^{a}}
\left\vert x\right\vert dm(x)<\infty\label{1}%
\end{equation}
with some $a\in\left(  l_{-},l_{+}\right)  $. Let $\mathcal{E}$ be the
totality of non-decreasing functions $m$ satisfying (\ref{1}). We exclude $m$
vanishing identically on $\left(  -\infty,\infty\right)  $ from $\mathcal{E}$.
One can regard $dm$ as a distribution of weight and in this case $m$ works as
a string. On the other hand, one can associate a generalized diffusion process
with generator $L$%
\[
L=\dfrac{d}{dm}\dfrac{d}{dx}%
\]
if we impose a suitable boundary condition if necessary. The condition
(\ref{1}) is called as entrance condition in 1D diffusion theory developed by
W.Feller, so we call $m$ satisfying (\ref{1}) a string of entrance type. For
an entrance type $m$, it is easy to show that for $\lambda$ $\in\mathbf{C}%
$\ an integral equation%
\[
\varphi(x)=1-\lambda%
{\displaystyle\int\nolimits_{-\infty}^{x}}
(x-y)\varphi(y)dm(y)
\]
has a unique solution, which is denoted by $\varphi_{\lambda}(x)$. Introduce a subspace%

\[
L_{0}^{2}(dm)=\left\{  f\in L^{2}(dm);\text{ supp}f\subset(-\infty,l)\right\}
,
\]
and for $f\in L_{0}^{2}(dm)$ define a generalized Fourier transform by%
\[
\widehat{f}(\lambda)=%
{\displaystyle\int\nolimits_{-\infty}^{l}}
f(x)\varphi_{\lambda}(x)dm(x).
\]
Krein's spectral theory implies there exists a measure $\sigma$ on
$[0,\infty)$ satisfying%
\begin{equation}%
{\displaystyle\int\nolimits_{-\infty}^{l}}
\left\vert f(x)\right\vert ^{2}dm(x)=%
{\displaystyle\int\nolimits_{0}^{\infty}}
\left\vert \widehat{f}(\xi)\right\vert ^{2}d\sigma(\xi)\text{ \ \ for any
}f\in L_{0}^{2}(dm).\label{2}%
\end{equation}
$\sigma$ is called a spectral measure for the string $m.$ The non-uniqueness
of such $\sigma$ occurs if and only if%
\begin{equation}
l_{+}+m\left(  l_{+}\right)  <\infty.\label{3}%
\end{equation}
The number $l\left(  \geq l_{+}\right)  $ possesses its meaning only when
(\ref{3}) is satisfied, and in this case there exists a $\sigma$ satisfying
(\ref{2}) with the boundary condition%
\[
f\left(  l_{+}\right)  +\left(  l-l_{+}\right)  f^{+}\left(  l_{+}\right)  =0
\]
at $l_{+}$. Here $f^{+\text{ }}$is the derivative from the right hand side. If
$l=\infty,$ this should be interpreted as%
\[
f^{+}\left(  l_{+}\right)  =0.
\]
At the left boundary $l_{-}$ no boundary condition is necessary if
$l_{-}=-\infty,$ and if $l_{-}>-\infty$ we impose the reflective boundary
condition, namely%
\[
f^{-}\left(  l_{-}\right)  =0\text{ \ \ the derivative from left.}%
\]
Generally, for a string $m$ of entrance type it is known that for $\lambda<0$
there exists uniquely $f$ such that%
\[
\left\{
\begin{array}
[c]{l}%
-Lf=\lambda f,\text{ \ }f>0,\text{ \ }f^{+}\leq0,\text{ \ }f\left(  l-\right)
=0\smallskip\\
f(x)\varphi_{\lambda}^{+}(x)-f^{+}(x)\varphi_{\lambda}(x)=1
\end{array}
.\right.
\]
This unique $f$ is denoted by $f_{\lambda}$ and contains information of the
boundary condition we are imposing on $-L$ at the right boundary $l_{+}$, and
$f_{\lambda}$ can be represented by $\varphi_{\lambda}$ as%
\begin{equation}
f_{\lambda}(x)=\varphi_{\lambda}(x)%
{\displaystyle\int\nolimits_{x}^{l}}
\frac{dy}{\varphi_{\lambda}(y)^{2}}.\label{4}%
\end{equation}
The right side integral is always convergent for $\lambda<0$, because if
\textrm{supp}$dm\neq\phi,$ then choosing $a\in\mathrm{supp}dm,$ we see for
$x>a$%
\[
\varphi_{\lambda}(x)\geq1-\lambda\int\nolimits_{-\infty}^{x}\left(
x-y\right)  dm(y)\geq1-\lambda\int\nolimits_{-\infty}^{a}\left(  x-y\right)
dm(y)\geq1-\lambda\left(  x-a\right)  m(a),
\]
hence%
\[%
{\displaystyle\int\nolimits_{x}^{l}}
\frac{dy}{\varphi_{\lambda}(y)^{2}}\leq%
{\displaystyle\int\nolimits_{x}^{l}}
\frac{dy}{\left(  1-\lambda\left(  y-a\right)  m(a)\right)  ^{2}}<\infty
\]
for $x>a.$ If \textrm{supp}$dm=\phi,$ then $l<\infty$ and%
\begin{equation}
m(x)=\left\{
\begin{array}
[c]{ccc}%
0 & \text{for} & \text{ }x<l\\
\infty & \text{for} & x>l
\end{array}
,\right.  \label{13}%
\end{equation}
which implies%
\[
\varphi_{\lambda}(x)=\left\{
\begin{array}
[c]{ccc}%
1 & \text{for} & x<l\\
\infty & \text{for} & x>l
\end{array}
,\right.
\]
and%
\[%
{\displaystyle\int\nolimits_{x}^{l}}
\frac{dy}{\varphi_{\lambda}(y)^{2}}=l-x<\infty.
\]
Here note that we have excluded $m=0$ identically on $\left(  -\infty
,\infty\right)  ,$ hence $l<\infty.$ If $m$ is a non-decreasing function of
(\ref{13}) the spectral measure vanishes identically on $[0,\infty).$ If $m$
is $\infty$ identically on $\left(  -\infty,\infty\right)  ,$ the spectral
function $\sigma$ is defined to be $0$ identically on $[0,\infty).$ Conversely
if a spectral measure vanishes identically on $[0,\infty),$ then the
associated string $m$ should be of (\ref{13}). $\varphi_{\lambda}(x)$ is an
entire function of minimal exponential type as a function of $\lambda$ and the
zeroes of $\varphi_{\lambda}(x)$ coincide with the eigenvalues of $-L$ defined
as a self-adjoint operator on $L^{2}(dm,(-\infty,x])$ with the Dirichlet
boundary condition at $x,$ which means that $\varphi_{\lambda}(x)$ has simple
zeroes on $(0,\infty).$ The Green function $g_{\lambda}$ for $-L$ on
$L^{2}(dm)$ is given by%
\[
g_{\lambda}(x,y)=g_{\lambda}(y,x)=f_{\lambda}(y)\varphi_{\lambda}(x)
\]
for $x\leq y.$ The relationship between $\sigma$ and $g_{\lambda}$ is
described by an identity%
\[%
{\displaystyle\int\nolimits_{-\infty}^{l}}
{\displaystyle\int\nolimits_{-\infty}^{l}}
g_{\lambda}(x,y)f(x)\overline{f(y)}dm(x)dm(y)=%
{\displaystyle\int\nolimits_{0}^{\infty}}
\frac{\left\vert \widehat{f}(\xi)\right\vert ^{2}}{\xi-\lambda}\sigma(d\xi)
\]
for any $f\in L^{2}(dm),$ and%
\[
g_{\lambda}(x,y)=%
{\displaystyle\int\nolimits_{0}^{\infty}}
\frac{\varphi_{\xi}(x)\varphi_{\xi}(y)}{\xi-\lambda}d\sigma\left(  \xi\right)
,
\]
through which $\sigma$ is determined uniquely from the string $m.$ Distinct
$m$s may give an equal $\sigma,$ namely for $a\in\mathbf{R}$ a new string%
\[
m_{a}\left(  x\right)  =m\left(  x+a\right)
\]
defines the same $\sigma,$ because%
\[
\varphi_{\lambda}^{a}(x)=\varphi_{\lambda}(x+a),\text{ \ }f_{\lambda}%
^{a}(x)=f_{\lambda}(x+a),
\]
hence%
\[
g_{\lambda}^{a}(x,x)=\varphi_{\lambda}^{a}(x)f_{\lambda}^{a}(x)=g_{\lambda
}(x+a,x+a)=%
{\displaystyle\int\nolimits_{0}^{\infty}}
\frac{\varphi_{\xi}(x+a)^{2}}{\xi-\lambda}d\sigma\left(  \xi\right)  .
\]
On the other hand%
\[
g_{\lambda}^{a}(x,x)=%
{\displaystyle\int\nolimits_{0}^{\infty}}
\frac{\varphi_{\xi}^{a}(x)\varphi_{\xi}^{a}(x)}{\xi-\lambda}d\sigma_{a}\left(
\xi\right)  =%
{\displaystyle\int\nolimits_{0}^{\infty}}
\frac{\varphi_{\xi}(x+a)^{2}}{\xi-\lambda}d\sigma_{a}\left(  \xi\right)  ,
\]
hence an identity%
\[
\sigma_{a}\left(  \xi\right)  =\sigma\left(  \xi\right)
\]
should be held. Conversely we have

\begin{theorem}
\label{t1}$($Kotani\cite{k1,k2}$)$ If two strings $m_{1}$ and $m_{2}$ of
$\mathcal{E}$ have the same spectral measure $\sigma,$ then $m_{1}(x+c)=$
$m_{2}(x)$ for a $c\in\mathbf{R}.$
\end{theorem}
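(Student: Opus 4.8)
The plan is to recover $m$ from $\sigma$ up to exactly the one-parameter ambiguity already displayed, by routing through a characteristic (Weyl) function $h_m$ whose representing measure is $\sigma$. The computation preceding the theorem shows that the translate $m_a(x)=m(x+a)$ leaves $\sigma$ unchanged; for the converse I would (i) attach to each $m\in\mathcal{E}$ an analytic function $h_m$ on $\mathbf{C}\setminus[0,\infty)$ in the Herglotz--Stieltjes class, (ii) check that $m\mapsto m_a$ acts on $h_m$ by a pure additive shift, and (iii) recover $m$ from $h_m$ by the (extended) Krein correspondence. Then two strings with the same $\sigma$ have characteristic functions with the same representing measure, hence differing only by an additive constant, which by (ii) is realized by a unique translation, and by (iii) forces the two strings to be translates.

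For the shift law, recall from (\ref{4}) that $f_\lambda(x)/\varphi_\lambda(x)=\int_x^l\varphi_\lambda(y)^{-2}\,dy=:W(\lambda,x)$, and take $h_m(\lambda)=\lim_{x\to-\infty}\bigl(x+\varphi_\lambda(x)W(\lambda,x)\bigr)$, the entrance characteristic function introduced in the Introduction, continued in $\lambda$. Using $\varphi_\lambda^a(x)=\varphi_\lambda(x+a)$ and $f_\lambda^a(x)=f_\lambda(x+a)$ from the display preceding the theorem, together with $l_{m_a}=l-a$ and the substitution $y\mapsto y-a$ in $W$, one gets $\varphi_\lambda^a(x)W^a(\lambda,x)=\varphi_\lambda(x+a)W(\lambda,x+a)$ and hence, after the change of variable $u=x+a$,
\[
h_{m_a}(\lambda)=h_m(\lambda)-a.
\]
Thus if $m_1,m_2\in\mathcal{E}$ share the same $\sigma$, their characteristic functions differ by a real constant, $h_{m_2}=h_{m_1}-c$; the identity gives $h_{m_1}-c=h_{(m_1)_c}$, so $h_{m_2}=h_{(m_1)_c}$, and the injectivity of $m\mapsto h_m$ yields $(m_1)_c=m_2$, that is $m_1(x+c)=m_2(x)$, as claimed.

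The weight of the argument sits in two places. First, that the representing measure of $h_m$ is exactly the spectral measure $\sigma$ of (\ref{2}); this follows from the Green-function representation of $g_\lambda(x,x)$ but must be tracked through the entrance limit. Second, and this is the genuine inverse-spectral content, the injectivity of $m\mapsto h_m$: it is the extension to the entrance normalization at $-\infty$ of the Krein correspondence quoted, in its one-sided form, in the Introduction, and I would establish it along the classical route, reconstructing $m$ from the behaviour of $h_m$ along the negative axis by the continued-fraction / de~Branges procedure. The main obstacle, and the point where the hypothesis $\int^{a}|x|\,dm<\infty$ of (\ref{1}) --- weaker than the limit-circle condition (\ref{14}) --- bites, is the construction of $h_m$ itself: since $\sigma$ may grow polynomially at $\infty$, the Stieltjes integral and the defining limit need not converge, and one must pass to a regularized characteristic function obtained by subtracting a polynomial counterterm in $\xi$ (as foreshadowed by the $\xi/(\xi^{2}+1)$ regularization of the Introduction). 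One must then verify that this regularized object still converges under (\ref{1}), still carries $\sigma$ as its essential data up to a finite polynomial ambiguity, and still obeys $h_{m_a}=h_m-a$ modulo that ambiguity, so that the additive freedom collapses to the single translation parameter; the non-uniqueness of $\sigma$ flagged by (\ref{3}) and the boundary condition at $l_+$ are subsidiary bookkeeping to be absorbed here.
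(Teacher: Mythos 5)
The paper does not prove Theorem \ref{t1}; it quotes it from \cite{k1,k2}, where the argument runs through de Branges' theory of Hilbert spaces of entire functions: the chain of spaces generated by $x\mapsto\varphi_\lambda(x)$ is totally ordered (the ordering theorem of \cite{k2}), the spectral measure determines this chain, and the chain determines $m$ up to the translation of the $dx$-scale. Your proposal instead routes through a characteristic function $h_m$, and while the shift law $h_{m_a}=h_m-a$ you compute is correct whenever $h_m$ exists, the proposal as written has a genuine gap: the entire inverse-spectral content --- the injectivity of $m\mapsto h_m$, equivalently of $\sigma\mapsto m$ up to translation --- is exactly the statement to be proved, and you defer it to ``the classical route \dots by the continued-fraction / de Branges procedure.'' Krein's continued-fraction reconstruction applies to regular strings on $[0,\infty)$ with $m(0-)=0$; it does not extend as stated to strings singular at $-\infty$ under only the entrance condition (\ref{1}), and making the de Branges route work there is precisely the content of \cite{k1}. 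So the plan reduces the theorem to itself.

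Two further steps you flag but do not close are also genuinely problematic under (\ref{1}) alone. First, the limit $\lim_{x\to-\infty}\bigl(x+\varphi_\lambda(x)\int_x^l\varphi_\lambda(y)^{-2}dy\bigr)$ and the representation with the single subtraction $\xi/(\xi^2+1)$ are only established in \cite{k3} under the limit-circle condition (\ref{14}); since here $\sigma$ may grow like an arbitrary power, you must build a higher-order regularized $h_m$ and prove the corresponding limit exists --- this is not routine and is not done. Second, with polynomial counterterms, two characteristic functions sharing the representing measure $\sigma$ differ a priori by a real \emph{polynomial} in $\lambda$, not by a constant; collapsing that polynomial to a single real constant (so that the additive freedom matches the one translation parameter) requires an asymptotic analysis of $h_m(\lambda)$ as $\lambda\to-\infty$ that the proposal does not supply. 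Until these three points are carried out, the argument is a programme rather than a proof.
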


If we hope to obtain the continuity of the correspondence between $m$ and
$\sigma,$ we have to keep the non-uniqueness in mind. Namely, for $m$ of
$\mathcal{E}$ a sequence $\left\{  m_{n}\right\}  _{n\geq1}$ of $\mathcal{E}$
defined by%
\[
m_{n}\left(  x\right)  =m\left(  x-n\right)
\]
converges to the trivial function $0$ as $n\rightarrow\infty.$ However the
associated $\sigma$s are independent of $n.$ Therefore we shall give several
alternative definitions of convergence by imposing certain extra conditions
(related to tightness) in addition to pointwise convergence. Set%
\[
M(x)=%
{\displaystyle\int\nolimits_{-\infty}^{x}}
\left(  x-y\right)  dm(y)=%
{\displaystyle\int\nolimits_{-\infty}^{x}}
m(y)dy.
\]
Then, the condition (\ref{1}) is equivalent to%
\[
M(x)<\infty
\]
for $x<l.$ Using a convention%
\[
\lbrack-\infty,a)=\left(  -\infty,a\right)  ,\text{ }(a,\infty]=\left(
a,\infty\right)  \text{ and so on,}%
\]
we can see $M$ is a non-decreasing convex function on $\left(  -\infty
,\infty\right)  $ satisfying%
\[
\left\{
\begin{array}
[c]{l}%
M\left(  x\right)  =0\text{ \ on }(-\infty,l_{-}],\\
\text{continuous and strictly increasing on }[l_{-},l),\\
M\left(  x\right)  =\infty\text{ on }\left(  l,\infty\right)  .
\end{array}
\right.
\]
For a fixed positive number $c$, we assume%
\begin{equation}
0\in(l_{-},l]\text{ and }M\left(  l\right)  \geq c,\label{5}%
\end{equation}
and normalize such an $m$ by%
\begin{equation}
M\left(  0\right)  =c.\label{9}%
\end{equation}
Denote by $\mathcal{E}^{\left(  c\right)  }$ the set of all elements of
$\mathcal{E}$ satisfying (\ref{5}), (\ref{9}) and set%
\[
\mathcal{E}_{+}=\bigcup\limits_{c>0}\mathcal{E}^{\left(  c\right)  }.
\]
In this definition of $\mathcal{E}_{+}$ among functions satisfying (\ref{1})
any function $m$ defined by (\ref{13}) for some $l\leq\infty$ is excluded from
$\mathcal{E}_{+}.$ Therefore, $\mathcal{E}\backslash\mathcal{E}_{+}$ consists
of $m$ satisfying (\ref{13}) for some $l<\infty.$ The uniqueness of the
correspondence between $m$ and $\sigma$ holds under this normalization. Set%
\[
\mathcal{S}=\text{the set of all spectral measures for strings of }%
\mathcal{E}.
\]
Any suitable characterization of $\mathcal{S}$ is not known yet, however, any
measure on $[0,\infty)$ with polynomial growth at $\infty$ belongs to
$\mathcal{S}$.

We prepare a basic estimate for $\varphi_{\lambda}.$ $\varphi_{\lambda}$ can
be represented as%
\begin{equation}
\varphi_{\lambda}\left(  x\right)  =%
{\displaystyle\sum\limits_{n=0}^{\infty}}
\left(  -\lambda\right)  ^{n}\phi_{n}(x),\label{6}%
\end{equation}
where $\left\{  \phi_{n}\right\}  _{n\geq0}$ are%
\[
\phi_{n}(x)=%
{\displaystyle\int\nolimits_{-\infty}^{x}}
\left(  x-y\right)  \phi_{n-1}(y)dm(y),\text{ \ \ \ }\phi_{0}(x)=1.
\]
Then, the convergence of the above series can be seen by

\begin{lemma}
\label{l1}$\varphi_{\lambda}$ is given by an absolute convergent series
$($\ref{6}$)$ and satisfies%
\[
\left\vert \varphi_{\lambda}\left(  x\right)  \right\vert \leq\exp\left(
\left\vert \lambda\right\vert M(x)\right)  .
\]

\end{lemma}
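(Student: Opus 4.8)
The plan is to prove by induction on $n$ the pointwise bound
\[
0\le \phi_n(x)\le \frac{M(x)^n}{n!}\qquad(n\ge 0),
\]
and then to dominate the series $(\ref{6})$ termwise by the convergent majorant $\sum_{n\ge0}|\lambda|^n M(x)^n/n!=\exp\!\left(|\lambda|M(x)\right)$. The base case $n=0$ is immediate since $\phi_0\equiv 1$. For the inductive step I would substitute the hypothesis $\phi_{n-1}(y)\le M(y)^{n-1}/(n-1)!$ (together with $\phi_{n-1}\ge0$) into the defining recursion for $\phi_n$, which multiplies the inequality by the non-negative factor $(x-y)\,dm(y)$ and reduces the whole matter to the single estimate
\[
\int_{-\infty}^x (x-y)M(y)^{n-1}\,dm(y)\le \frac{M(x)^n}{n}.
\]
Granting this, absolute convergence of $(\ref{6})$ and the stated bound follow at once from the triangle inequality at every $x<l$, where $M(x)<\infty$ by $(\ref{1})$; where $M(x)=\infty$ the bound holds trivially.

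The step I expect to be the real obstacle is extracting the correct factorial $1/n!$. The crude substitution $M(y)\le M(x)$ made directly under the integral yields only
\[
\int_{-\infty}^x (x-y)M(y)^{n-1}\,dm(y)\le M(x)^{n-1}\int_{-\infty}^x(x-y)\,dm(y)=M(x)^{n},
\]
hence $\phi_n\le M(x)^n/(n-1)!$; the resulting majorant $\sum_n|\lambda|^nM(x)^n/(n-1)!$ is comparable to $|\lambda|M(x)\exp(|\lambda|M(x))$. This is enough for convergence but \emph{not} for the sharp exponential estimate claimed, so the factorial refinement is essential. To recover the missing factor $1/n$ I would rewrite the kernel as $(x-y)=\int_y^x dt$ and apply Fubini (all integrands being non-negative), turning the left-hand side into
\[
\int_{-\infty}^x\Big(\int_{-\infty}^t M(y)^{n-1}\,dm(y)\Big)\,dt .
\]
Monotonicity of $M$ then gives, for the inner integral, $\int_{-\infty}^t M(y)^{n-1}\,dm(y)\le M(t)^{n-1}\int_{-\infty}^t dm(y)\le M(t)^{n-1}m(t)$.

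Finally I would recognize $M(t)^{n-1}m(t)=\tfrac1n\frac{d}{dt}M(t)^n$, using $M(t)=\int_{-\infty}^t m(y)\,dy$: thus $M$ is absolutely continuous on compact subsets of $(-\infty,l)$ with $M'=m$ a.e., so $M^n$ is absolutely continuous and the fundamental theorem of calculus applies. Integrating in $t$ over $(-\infty,x]$ and using $M(-\infty)=0$ produces exactly $M(x)^n/n$, completing the induction. A virtue of this Fubini-plus-monotonicity route is that it is insensitive to atoms of $dm$; a direct symmetrization of the fully iterated integral representation of $\phi_n$ would instead have to be restricted to the strictly ordered region to avoid overcounting the diagonals when $dm$ carries point masses, which is a more delicate bookkeeping than the inductive argument above.
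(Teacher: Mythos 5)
Your proposal is correct, and it shares the paper's skeleton exactly: induction on the bound $\phi_n(x)\le M(x)^n/n!$ followed by termwise domination of the series. The only place where you diverge is in the execution of the inductive step, and there the two arguments are genuinely different. The paper establishes
\[
\int_{-\infty}^{x}(x-y)M(y)^{k}\,dm(y)\le\int_{-\infty}^{x}M(y)^{k}M'(y)\,dy=\frac{M(x)^{k+1}}{k+1}
\]
by an integration by parts in $dm$ and discarding the resulting non-positive term $-k(x-y)M(y)^{k-1}M'(y)^{2}$; you instead write $(x-y)=\int_y^x dt$, apply Tonelli, and use monotonicity of $M$ to bound the inner integral by $M(t)^{n-1}m(t)=\frac1n\frac{d}{dt}M(t)^n$, arriving at the same final integral. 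Both are valid; your route has the advantage you point out, namely that it manipulates only non-negative integrands and the locally Lipschitz function $M$, so no care is needed about how the Stieltjes integration by parts interacts with atoms of $dm$ (the paper's display is written as though $dm=M'(y)\,dy$, though it can be justified in general since the integrated-by-parts factor $(x-y)M(y)^k$ is continuous). Your preliminary observation that the crude bound $M(y)\le M(x)$ loses the factor $1/n$ and only gives convergence, not the stated exponential estimate, correctly identifies why the refinement is needed.
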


\begin{proof}
First we show for any $k\geq0$%
\begin{equation}
\phi_{k}(x)\leq\frac{M(x)^{k}}{k!}\label{7}%
\end{equation}
holds. Observe%
\[
\phi_{1}(x)=%
{\displaystyle\int\nolimits_{-\infty}^{x}}
\left(  x-y\right)  dm(y)=M(x).
\]
Assuming (\ref{7}) for some $k$, we have%
\begin{align*}
\phi_{k+1}(x) &  \leq\frac{1}{k!}%
{\displaystyle\int\nolimits_{-\infty}^{x}}
\left(  x-y\right)  M(y)^{k}dm(y)\\
&  =\frac{1}{k!}%
{\displaystyle\int\nolimits_{-\infty}^{x}}
\left(  M(y)-k\left(  x-y\right)  M^{\prime}(y)\right)  M(y)^{k-1}M^{\prime
}(y)dy\\
&  \leq\frac{1}{k!}%
{\displaystyle\int\nolimits_{-\infty}^{x}}
M^{\prime}(y)M(y)^{k}dy=\frac{M(x)^{k+1}}{\left(  k+1\right)  !},
\end{align*}
which proves (\ref{7}) for general $k.$ Then the estimate of $\varphi
_{\lambda}$ is clear.
\end{proof}

Here we clarify the convergence of a sequence of monotone functions taking
value $\infty.$ For non-negative and non-decreasing function $m$ which may
take $\infty,$ set%
\[
\widehat{m}\left(  x\right)  =\frac{2}{\pi}\tan^{-1}m(x),\text{ \ \ \ }%
x\in\boldsymbol{R}.
\]
Then%
\[
\widehat{m}\left(  x\right)  \in\left[  0,1\right]
\]
and right continuous non-decreasing function satisfying%
\[
0\leq\widehat{m}\left(  -\infty\right)  \leq\widehat{m}\left(  x\right)
\leq\widehat{m}\left(  l-\right)  \leq\widehat{m}\left(  l\right)  =1,
\]
if $l<\infty.$ A sequence of non-negative and non-decreasing functions $m_{n}$
is defined to converge to $m$ as $n\rightarrow\infty$ if%
\begin{equation}
\widehat{m}_{n}\left(  x\right)  \rightarrow\widehat{m}\left(  x\right)
\end{equation}
holds at any point of continuity of $\widehat{m}\left(  x\right)  $.

\begin{lemma}
\label{l3}Suppose $m_{n}\in\mathcal{E}$ converges to $m\in\mathcal{E}$ as
$n\rightarrow\infty.$ Then it holds that%
\[
\underset{n\rightarrow\infty}{\underline{\lim}}l_{n}\geq l.
\]

\end{lemma}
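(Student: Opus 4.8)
The plan is to argue by contradiction, exploiting the fact that the convergence $m_{n}\to m$ is formulated through the bounded transforms $\widehat{m}_{n},\widehat{m}$, which convert the value $\infty$ into the value $1$. The two facts I will lean on are purely definitional: by the definition of $l$ one has $m(x)<\infty$, hence $\widehat{m}(x)<1$, for every $x<l$; and by the definition of $l_{n}$ one has $m_{n}(x)=\infty$, hence $\widehat{m}_{n}(x)=1$, for every $x>l_{n}$. The assertion $\liminf_{n\to\infty}l_{n}\geq l$ is thus a lower-semicontinuity statement for $l$ under this mode of convergence.

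Suppose, contrary to the claim, that $\liminf_{n\to\infty}l_{n}<l$. Then I fix a real number $\beta$ with $\liminf_{n\to\infty}l_{n}<\beta<l$ and pass to a subsequence $\{n_{k}\}$ along which $l_{n_{k}}<\beta$ for every $k$. Since $\widehat{m}$ is monotone it has at most countably many points of discontinuity, so the nonempty open interval $(\beta,l)$ contains a point $x_{0}$ at which $\widehat{m}$ is continuous; this is the point at which I will test the hypothesized convergence.

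At $x_{0}$ the two sides become incompatible. On one hand $x_{0}<l$ gives $m(x_{0})<\infty$, so $\widehat{m}(x_{0})<1$. On the other hand $x_{0}>\beta>l_{n_{k}}$ gives $m_{n_{k}}(x_{0})=\infty$, so $\widehat{m}_{n_{k}}(x_{0})=1$ for every $k$. Passing to the limit in $\widehat{m}_{n_{k}}(x_{0})\to\widehat{m}(x_{0})$, which is legitimate precisely because $x_{0}$ is a continuity point of $\widehat{m}$, would force $1=\widehat{m}(x_{0})<1$, a contradiction. Hence $\liminf_{n\to\infty}l_{n}\geq l$.

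I do not expect a deep obstacle here; the whole argument is monotonicity-plus-continuity-point bookkeeping. The two points that require care are (i) guaranteeing that a continuity point of $\widehat{m}$ actually lies in $(\beta,l)$, which is exactly where the countability of the discontinuity set of a monotone function is invoked, and (ii) correctly translating ``takes the value $\infty$'' into ``$\widehat{m}=1$'' under the $\tan^{-1}$ transform, so that the strict inequality $\widehat{m}(x_{0})<1$ is genuinely available for $x_{0}<l$. The case $l=\infty$ is covered by the same argument verbatim, since then $(\beta,l)=(\beta,\infty)$ is still a nonempty interval and $m(x_{0})<\infty$ holds automatically.
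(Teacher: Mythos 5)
Your argument is correct and is essentially the paper's proof in contrapositive form: both hinge on picking a continuity point $x_{0}$ of $\widehat{m}$ with $x_{0}<l$ (available since a monotone function has countably many discontinuities), noting $\widehat{m}(x_{0})<1$, and using the convergence $\widehat{m}_{n}(x_{0})\rightarrow\widehat{m}(x_{0})$ to force $\widehat{m}_{n}(x_{0})<1$, hence $x_{0}\leq l_{n}$, for all large $n$. The paper runs this directly for every continuity point $x<l$ rather than by contradiction, but the content is identical.
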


\begin{proof}
Let $x<l$ be a point of continuity for $\widehat{m}.$ Then%
\[
\widehat{m}_{n}\left(  x\right)  \rightarrow\widehat{m}\left(  x\right)  <1,
\]
hence%
\[
\widehat{m}_{n}\left(  x\right)  <1
\]
for every sufficiently large $n,$ which implies $x<l_{n}$ and completes the proof.
\end{proof}

The continuity of the correspondence from $\mathcal{E}$ to $\mathcal{S}$ is
not hard to show. Let $m_{n},m$\ be strings of $\mathcal{E}$ and define the
convergence of $m_{n}$ to $m$ by\medskip\newline(A) $m_{n}\left(  x\right)
\rightarrow m\left(  x\right)  $ for every point of continuity of
$m.\smallskip$\newline(B) $\lim\limits_{x\rightarrow-\infty}\sup
\limits_{n\geq1}M_{n}(x)=0$\newline

\begin{theorem}
\label{t9}Suppose $m_{n}\in\mathcal{E}$ converge to $m\in\mathcal{E}$. Then,
for every $\lambda<0$ the Green functions $g_{\lambda}^{\left(  n\right)
}\left(  x,y\right)  $ of the string $m_{n}$ converge to the Green function
$g_{\lambda}\left(  x,y\right)  $ of $m$ for any $x,y<l.$ In particular the
spectral functions $\sigma_{n}\left(  \xi\right)  $ converge to $\sigma\left(
\xi\right)  $ at every point of continuity of $\sigma.$
\end{theorem}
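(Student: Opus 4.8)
The plan is to reduce the assertion to the convergence of the two ingredients out of which $g_\lambda$ is built, and then to read off the spectral convergence from a Stieltjes transform argument. For $\lambda<0$ and $x\le y$ the formula $g_{\lambda}(x,y)=f_{\lambda}(y)\varphi_{\lambda}(x)$ together with (\ref{4}) gives
\[
g_{\lambda}^{(n)}(x,y)=\varphi_{\lambda}^{(n)}(x)\,\varphi_{\lambda}^{(n)}(y)\int_{y}^{l_{n}}\frac{dz}{\varphi_{\lambda}^{(n)}(z)^{2}},
\]
so it suffices to prove (i) $\varphi_{\lambda}^{(n)}\to\varphi_{\lambda}$ locally uniformly on $(-\infty,l)$, and (ii) $\int_{y}^{l_{n}}\varphi_{\lambda}^{(n)}(z)^{-2}\,dz\to\int_{y}^{l}\varphi_{\lambda}(z)^{-2}\,dz$ for each fixed $y<l$.

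For (i) I would argue term by term in the series (\ref{6}). First, conditions (A) and (B) give $M_{n}(x)\to M(x)$ for $x<l$: writing $M_{n}(x)=\int_{-\infty}^{x}m_{n}(y)\,dy$, the integrand converges Lebesgue--a.e.\ (since $m_{n}\to m$ off the countable discontinuity set of $m$), it is dominated by a constant on compact subintervals, and the tail near $-\infty$ is uniformly small by (B). An induction on $k$, using integration by parts to transfer $\int(x-y)\phi_{k-1}\,dm_{n}$ onto $m_{n}$ (the boundary terms vanishing by (\ref{1})) and invoking the local uniform convergence $\phi_{k-1}^{(n)}\to\phi_{k-1}$ from the previous step, then yields $\phi_{k}^{(n)}\to\phi_{k}$ locally uniformly. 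Finally the bound $\phi_{k}^{(n)}(x)\le M_{n}(x)^{k}/k!$ from Lemma \ref{l1}, together with the local boundedness of $M_{n}$, dominates the series uniformly on compacts, so dominated convergence in the summation index delivers (i).

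Step (ii) is where the real work lies, and I expect it to be the main obstacle. On any $[y,b]\subset(-\infty,l)$ the integrand is bounded by $1$ (as $\varphi_{\lambda}^{(n)}\ge1$ for $\lambda<0$) and converges uniformly by (i), so $\int_{y}^{b}\to\int_{y}^{b}$; what remains is a uniform-in-$n$ tail bound showing $\int_{b}^{l_{n}}\varphi_{\lambda}^{(n)}(z)^{-2}\,dz\to0$ as $b\uparrow l$. Choosing a continuity point $a$ of $m$ with $m(a)>0$ we have $m_{n}(a-)\ge\delta>0$ for large $n$, whence $\varphi_{\lambda}^{(n)}(z)\ge1+|\lambda|\delta(z-a)$ for $z>a$; this disposes of the tail when $l=\infty$. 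The delicate case is $l<\infty$, where Lemma \ref{l3} only gives $\liminf_{n}l_{n}\ge l$, so the endpoints $l_{n}$ may overshoot $l$, even to $+\infty$. Here I would use that on $(l,\infty)$ one has $m(z)=\infty$, so $m_{n}(z)\to\infty$ and hence $M_{n}(z)\to\infty$ by Fatou; quantitatively, for $z\ge l'>l$ one gets $\varphi_{\lambda}^{(n)}(z)\ge1+|\lambda|(z-l')\bigl(m_{n}(l')-m_{n}(a)\bigr)$ with $m_{n}(l')\to\infty$, so $\int_{l'}^{l_{n}}\varphi_{\lambda}^{(n)}(z)^{-2}\,dz\le\bigl(|\lambda|(m_{n}(l')-m_{n}(a))\bigr)^{-1}\to0$. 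Splitting the tail at such an $l'$ and letting $b\uparrow l$ yields (ii), and therefore $g_{\lambda}^{(n)}(x,y)\to g_{\lambda}(x,y)$.

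For the spectral conclusion, fix $x<l$ and view $g_{\lambda}^{(n)}(x,x)=\int_{0}^{\infty}(\xi-\lambda)^{-1}\varphi_{\xi}^{(n)}(x)^{2}\,d\sigma_{n}(\xi)$ as the Stieltjes transform of $d\nu_{n}:=\varphi_{\xi}^{(n)}(x)^{2}\,d\sigma_{n}(\xi)$. Since $g_{\lambda}^{(n)}(x,x)\to g_{\lambda}(x,x)$ for every $\lambda<0$, the continuity theorem for Stieltjes transforms gives $\nu_{n}\to\nu:=\varphi_{\xi}(x)^{2}\,d\sigma(\xi)$ vaguely. To remove the weight, fix a continuity point $\xi_{0}$ of $\sigma$ and choose $x$ with $\varphi_{\xi_{0}}(x)\neq0$ (possible because $\varphi_{\xi_{0}}(x)\to1$ as $x\to-\infty$); since $\xi\mapsto\varphi_{\xi}^{(n)}(x)$ converges to $\xi\mapsto\varphi_{\xi}(x)$ locally uniformly (again by (\ref{6})) and the limit is entire with isolated zeros, $\varphi_{\xi}^{(n)}(x)^{2}$ is bounded and bounded away from $0$ uniformly in $n$ on a neighbourhood of $\xi_{0}$. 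Dividing the vague convergence $\nu_{n}\to\nu$ by $\varphi_{\xi}(x)^{2}$ on that neighbourhood then gives $\sigma_{n}\to\sigma$ at $\xi_{0}$, which is the claimed convergence at every continuity point of $\sigma$.
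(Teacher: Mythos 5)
Your proposal is correct and follows essentially the same route as the paper: locally uniform convergence of $\varphi_{\lambda}^{(n)}$ via the series (\ref{6}) and the bound of Lemma \ref{l1}, convergence of $f_{\lambda}^{(n)}$ through the integral $\int^{l_{n}}\varphi_{\lambda}^{(n)}(z)^{-2}dz$ using the linear lower bound $\varphi_{\lambda}^{(n)}(z)\geq1+C(z-a)$, and the eigenfunction-expansion identity for $g_{\lambda}$ to pass to $\sigma_{n}\rightarrow\sigma$. You merely supply details the paper leaves implicit, notably the tail estimate on $(l,l_{n}]$ when $l<\infty$ and the Stieltjes-transform continuity step.
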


\begin{proof}
Under the conditions it is easy to see that the $\varphi-$functions
$\varphi_{\lambda}^{\left(  n\right)  }(x)$ of $m_{n}$ converge to the
$\varphi-$function $\varphi_{\lambda}(x)$ of $m$ compact uniformly with
respect to $\left(  x,\lambda\right)  \in\left(  -\infty,l\right)
\times\boldsymbol{C}$ from the uniform bound for $\varphi_{\lambda}^{\left(
n\right)  }$ due to Lemma \ref{l1}.$.$ Moreover, if $m\left(  a\right)  >0$ at
some $a,$ a point of continuity of $m$, then there exists a positive constant
$C$ such that%
\[
\varphi_{\lambda}^{\left(  n\right)  }(y)\geq1-\lambda M_{n}\left(  y\right)
\geq1+C\left(  y-a\right)
\]
holds for any $y>a$, hence%
\[
f_{\lambda}^{\left(  n\right)  }(x)=\varphi_{\lambda}^{\left(  n\right)
}(x)\int_{x}^{l_{n}}\frac{1}{\varphi_{\lambda}^{\left(  n\right)  }(y)^{2}}dy
\]
also converge to $f_{\lambda}(x).$ If supp$m=\phi,$ namely $m(x)=0$
identically on $\left(  -\infty,l\right)  ,$ then%
\[
f_{\lambda}^{\left(  n\right)  }(x)\rightarrow l-x
\]
if $l<\infty.$ The case $l=\infty$ is excluded. Consequently, we have%
\[
g_{\lambda}^{\left(  n\right)  }\left(  x,y\right)  =\varphi_{\lambda
}^{\left(  n\right)  }(y)f_{\lambda}^{\left(  n\right)  }(x)\rightarrow
\varphi_{\lambda}(y)f_{\lambda}(x)=g_{\lambda}\left(  x,y\right)
\]
for any $y\leq x<l.$ The identity%
\[
g_{\lambda}^{\left(  n\right)  }\left(  x,y\right)  =\int_{0}^{\infty}%
\frac{\varphi_{\xi}^{\left(  n\right)  }(x)\varphi_{\xi}^{\left(  n\right)
}(y)}{\xi-\lambda}d\sigma_{n}\left(  \xi\right)
\]
shows the last statement of the theorem.
\end{proof}

\section{Scales and estimates by trace}

The straight converse statement of the theorem \ref{t9} is hopeless to be
true, because there is no characterization for a measure $\sigma$ on
$[0,\infty)$ to be a spectral measure of a string $m\in\mathcal{E}$ $.$
Therefore we prove the converse continuity of the correspondence by imposing a
condition on $\left\{  \sigma_{n}\right\}  .$ In the process of the proof we
have to estimate $\varphi_{\lambda}(x)^{-2}$ in terms of $m.$ A better way to
investigate $\varphi_{\lambda}(x)^{-2}$ is to use probabilistic methods.
Recall that for each fixed $a,$ $\varphi_{\lambda}(a)$ has simple zeroes
$\left\{  \mu_{n}\right\}  _{n\geq1}$ which are eigenvalues of $-L$ on
$(-\infty,a]$ with Dirichlet boundary condition at $x=a.$ Since the Green
function for this operator is%
\[
a-\left(  x\vee y\right)  ,
\]
we see%
\begin{equation}
\sum\limits_{n=1}^{\infty}\mu_{n}^{-1}=\mathrm{tr}\left(  -L\right)  ^{-1}=%
{\displaystyle\int\nolimits_{-\infty}^{a}}
\left(  a-x\right)  dm(x)=M(a)<\infty.\label{35}%
\end{equation}
Choosing a $b<l$, we denote by $\phi_{\lambda}^{b}$ $\left(  \psi_{\lambda
}^{b}\right)  $ the solutions of%
\[
-\dfrac{d}{dm}\dfrac{d}{dx}f=\lambda f,\text{ \ with }f(b)=1,\text{ }%
f^{\prime}(b)=0\text{ \ }\left(  f(b)=0,\text{ }f^{\prime}(b)=1\right)  \text{
respectively.}%
\]
Then we see an identity%
\[
\varphi_{\lambda}(x)=\varphi_{\lambda}(b)\phi_{\lambda}^{b}\left(  x\right)
+\varphi_{\lambda}^{\prime}(b)\psi_{\lambda}^{b}\left(  x\right)
\]
holds. Lemma \ref{l1} implies $\varphi_{\lambda}(b)$ and $\varphi_{\lambda
}^{\prime}(b)$ are entire functions of at most exponential type $M(b)$ as
functions of $\lambda$. On the other hand, $\phi_{\lambda}^{b}\left(
x\right)  $ and $\psi_{\lambda}^{b}\left(  x\right)  $ are entire functions of
order at most $1/2$ as functions of $\lambda$. Therefore we know that
$\varphi_{\lambda}(x)$ is an entire function of minimal exponential type,
which combined with (\ref{35}) shows%
\[
\varphi_{\lambda}(a)=\prod\limits_{n=1}^{\infty}\left(  1-\frac{\lambda}%
{\mu_{n}}\right)  .
\]
For the detail refer to page 441 of \cite{k1}. Now let $\left\{
X_{n}\right\}  _{n\geq1}$ be independent random variables each of which has an
exponential distribution of mean $1.$ Then, an identity%
\[
E\exp\left(  \lambda\sum_{n=1}^{\infty}\mu_{n}^{-1}X_{n}\right)
=\prod\limits_{n=1}^{\infty}\left(  1-\frac{\lambda}{\mu_{n}}\right)
^{-1}=\varphi_{\lambda}(a)^{-1}%
\]
holds. Therefore, letting $\left\{  \widetilde{X}_{n}\right\}  _{n\geq1}$ be
independent copies of $\left\{  X_{n}\right\}  _{n\geq1}$ and setting%
\[
Y_{n}=X_{n}+\widetilde{X}_{n},\text{ \ }X=\sum_{n=1}^{\infty}\mu_{n}^{-1}%
Y_{n},
\]
we have%
\begin{equation}
\varphi_{\lambda}(a)^{-2}=E\exp\left(  \lambda X\right)  .\label{10}%
\end{equation}
We denote $X=X(a)$ if necessary, because the eigenvalues $\left\{  \mu
_{n}\right\}  _{n\geq1}$ depends on the boundary $a.$

\begin{lemma}
\label{l15}Suppose the spectral measure $\sigma$ of an $m\in\mathcal{E}$
satisfies%
\[
p(t)=\int\nolimits_{0}^{\infty}e^{-t\xi}d\sigma\left(  \xi\right)
<\infty\text{ \ for any }t>0.
\]
Then, for any non-negative Borel measurable function $f$ on $[0,\infty)$%
\begin{equation}
\int\nolimits_{-\infty}^{l}Ef\left(  X\left(  x\right)  \right)
dx=\int\nolimits_{0}^{\infty}p(t)f(t)dt \label{26}%
\end{equation}
holds by permitting for the integrals to take the value $\infty$ simultaneously.
\end{lemma}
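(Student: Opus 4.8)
The plan is to read both sides of (\ref{26}) as the integral of $f$ against a measure on $[0,\infty)$ and to show that these two measures coincide. Writing $\rho_x$ for the law of $X(x)$, Tonelli's theorem gives
\[
\int_{-\infty}^{l}Ef(X(x))\,dx=\int_{-\infty}^{l}\int_{0}^{\infty}f(t)\,\rho_x(dt)\,dx=\int_{0}^{\infty}f(t)\,\nu(dt),\qquad \nu:=\int_{-\infty}^{l}\rho_x\,dx,
\]
while the right-hand side is $\int_{0}^{\infty}f(t)\,p(t)\,dt$. Thus it suffices to prove that $\nu(dt)=p(t)\,dt$ as measures on $[0,\infty)$; since both are non-negative it is enough to check that they agree on a class of test functions rich enough to separate measures, permitting the common value to be $+\infty$.

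First I would test against the exponentials $f(t)=e^{-st}$, $s>0$. By (\ref{10}) the Laplace transform of $\nu$ is $\int_{0}^{\infty}e^{-st}\,\nu(dt)=\int_{-\infty}^{l}Ee^{-sX(x)}\,dx=\int_{-\infty}^{l}\varphi_{-s}(x)^{-2}\,dx$, while Tonelli applied to $p$ gives $\int_{0}^{\infty}e^{-st}p(t)\,dt=\int_{0}^{\infty}(s+\xi)^{-1}\,d\sigma(\xi)$. Everything then reduces to the spatial identity $\int_{-\infty}^{l}\varphi_{-s}(x)^{-2}\,dx=\int_{0}^{\infty}(s+\xi)^{-1}\,d\sigma(\xi)$. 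To obtain it I would use (\ref{4}): since $\int_{x}^{l}\varphi_{-s}(y)^{-2}\,dy=f_{-s}(x)/\varphi_{-s}(x)$, the diagonal Green function equals $g_{-s}(x,x)=\varphi_{-s}(x)f_{-s}(x)=\varphi_{-s}(x)^{2}\int_{x}^{l}\varphi_{-s}(y)^{-2}\,dy$, whereas its spectral representation gives $g_{-s}(x,x)=\int_{0}^{\infty}\varphi_\xi(x)^{2}(s+\xi)^{-1}\,d\sigma(\xi)$. Letting $x$ approach the left end $l_{-}$ of $\mathrm{supp}\,dm$, where $\varphi_{-s}$ and each $\varphi_\xi$ tend to $1$, the first expression tends to $\int_{l_{-}}^{l}\varphi_{-s}(y)^{-2}\,dy$ and the second to $\int_{0}^{\infty}(s+\xi)^{-1}\,d\sigma(\xi)$, which is the desired identity.

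Once the exponentials are handled, I would pass to general non-negative Borel $f$ via the fact that the Laplace transform determines a measure on $[0,\infty)$: agreement of the transforms for all $s>0$ forces $\nu=p(t)\,dt$ whenever these transforms are finite, and then monotone convergence yields (\ref{26}) for every non-negative Borel $f$, with the two integrals being infinite together precisely when $\nu$ (equivalently $p(t)\,dt$) has infinite mass on the relevant set.

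The main obstacle is the last step of the spatial identity: the interchange of the limit $x\to l_{-}$ with the two integrals and the control of the left boundary. On the Green-function side one must justify $g_{-s}(x,x)\to\int_{0}^{\infty}(s+\xi)^{-1}\,d\sigma$ by monotone or dominated convergence, using $\varphi_\xi(x)\to1$ together with the bound of Lemma \ref{l1}; and one must account for the region to the left of $\mathrm{supp}\,dm$, where $X(x)=0$ and the integrand degenerates, which is exactly where the convention ``permitting the integrals to take the value $\infty$ simultaneously'' enters. When $\int_{0}^{\infty}(s+\xi)^{-1}\,d\sigma=\infty$ the plain Laplace transforms are both infinite and no longer pin down the measures, so there I would instead test against $f(t)=t^{k-1}e^{-st}$ with $k$ large enough that $\int_{0}^{\infty}(s+\xi)^{-k}\,d\sigma<\infty$ (possible under polynomial growth of $\sigma$); these functions vanish at $0$, render both sides finite, and still separate measures on $(0,\infty)$, after which monotone convergence completes the proof.
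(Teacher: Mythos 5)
Your overall strategy---read both sides of (\ref{26}) as integrals of $f$ against measures, compute Laplace transforms via (\ref{10}) and the Green-function identity coming from (\ref{4}), then invoke uniqueness---is the same circle of ideas as the paper's proof, and your spatial identity $\varphi_{\lambda}(x)^{2}\int_{x}^{l}\varphi_{\lambda}(y)^{-2}dy=g_{\lambda}(x,x)=\int_{0}^{\infty}\frac{\varphi_{\xi}(x)^{2}}{\xi-\lambda}d\sigma(\xi)$ is exactly the paper's starting point. The gap is in the order of operations: you pass to the limit $x\to-\infty$ \emph{first} and then try to identify the limiting measures $\nu=\int_{-\infty}^{l}\rho_{x}\,dx$ and $p(t)\,dt$ by their Laplace transforms. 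But in the cases this paper is actually about---$\sigma$ of polynomial growth of order $\alpha\geq1$, e.g.\ $d\sigma=c\,d\xi^{\alpha}$, so that $p(t)$ behaves like $t^{-\alpha}$ near $0$---one has $\int_{0}^{\infty}e^{-st}p(t)\,dt=\int_{0}^{\infty}(s+\xi)^{-1}d\sigma(\xi)=+\infty$ for \emph{every} $s>0$, so both transforms are identically infinite and determine nothing. Your fallback of testing against $f(t)=t^{k-1}e^{-st}$ would require the identity $\int_{-\infty}^{l}E\bigl(X(x)^{k-1}e^{-sX(x)}\bigr)dx=\Gamma(k)\int_{0}^{\infty}(s+\xi)^{-k}d\sigma(\xi)$, which cannot be obtained by differentiating your spatial identity in $s$, since that identity is by then an equality between two infinities; establishing it forces you back to an argument at finite $x$. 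That is precisely what the paper does: it first proves $\int_{x}^{l}E\bigl(f(X(y))e^{\lambda X(y)}\bigr)dy=\int_{0}^{\infty}E\bigl(f(X(x)+t)e^{\lambda(X(x)+t)}\bigr)p(t,x,x)\,dt$ for each fixed $x<l$, where both sides are \emph{finite} measures applied to $f$ (so the Laplace-transform plus monotone-class step is legitimate), and only afterwards lets $x\to-\infty$, using $X(x)\to0$, the domination $p(t,x,x)\leq p(t-2M(x))$ for $t>2M(x)$, and a restriction to $f$ vanishing on $[0,\epsilon)$.

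On the left boundary: you correctly flag the region to the left of $\mathrm{supp}\,dm$, but your resolution is not right. When $l_{-}>-\infty$, the identity $\int_{-\infty}^{l}\varphi_{-s}(x)^{-2}dx=\int_{0}^{\infty}(s+\xi)^{-1}d\sigma(\xi)$ you aim for is simply false: the left side contains $\int_{-\infty}^{l_{-}}1\,dx=\infty$, while the right side equals $\lim_{x\downarrow l_{-}}g_{-s}(x,x)=\int_{l_{-}}^{l}\varphi_{-s}(y)^{-2}dy$, which is finite whenever $l<\infty$. The two sides are then \emph{not} infinite simultaneously; $\nu$ carries an infinite atom at $t=0$ that $p(t)\,dt$ does not, so the identification can only hold against $f$ with $f(0)=0$. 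The convention about simultaneous infinities does not repair this; the correct repair is to restrict to $f$ vanishing at $0$, which is what the paper's proof implicitly does (it takes $f(t)=0$ for $t<\epsilon$ before letting $x\to-\infty$) and is how the lemma is used later, with $f=\phi(\cdot)e^{\lambda\cdot}$ and $\phi(0)=0$.
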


\begin{proof}
From (\ref{4}) it follows that for any $x<l$ and $\lambda<0$%
\[
\int\nolimits_{x}^{l}Ee^{\lambda X\left(  y\right)  }dy=Ee^{\lambda X\left(
x\right)  }\int\nolimits_{0}^{\infty}\frac{\varphi_{\xi}\left(  x\right)
^{2}}{\xi-\lambda}d\sigma\left(  \xi\right)  =\int\nolimits_{0}^{\infty
}Ee^{\lambda\left(  X\left(  x\right)  +t\right)  }p(t,x,x)dt,
\]
holds, where $p(t,x,y)$ is the transition probability density defined by%
\[
p(t,x,y)=\int\nolimits_{0}^{\infty}e^{-t\xi}\varphi_{\xi}\left(  x\right)
\varphi_{\xi}\left(  y\right)  d\sigma\left(  \xi\right)  ,
\]
hence a functional monotone class theorem shows that the identity below holds
for any non-negative bounded continuous function $f$ on $[0,\infty).$%
\begin{equation}
\int\nolimits_{x}^{l}E\left(  f\left(  X\left(  y\right)  \right)  e^{\lambda
X\left(  y\right)  }\right)  dy=\int\nolimits_{0}^{\infty}E\left(  f\left(
X\left(  x\right)  +t\right)  e^{\lambda\left(  X\left(  x\right)  +t\right)
}\right)  p(t,x,x)dt \label{27}%
\end{equation}
Since, for $t>2M(x)$%
\[
p(t,x,x)=\int\nolimits_{0}^{\infty}e^{-\xi t}\varphi_{\xi}\left(  x\right)
^{2}d\sigma\left(  \xi\right)  \leq\int\nolimits_{0}^{\infty}e^{-\xi t}e^{2\xi
M(x)}d\sigma\left(  \xi\right)  =p\left(  t-2M(x)\right)
\]
holds, assuming $f(t)=0$ for $t<\epsilon$, we see%
\[
\int\nolimits_{-\infty}^{l}E\left(  f\left(  X\left(  y\right)  \right)
e^{\lambda X\left(  y\right)  }\right)  dy=\int\nolimits_{0}^{\infty}f\left(
t\right)  e^{\lambda t}p(t)dt
\]
by letting $x\rightarrow-\infty.$ Here we have used the fact%
\[
X\left(  x\right)  \rightarrow0\text{ \ \ \ as }x\rightarrow-\infty.
\]
The rest of the proof is a routine.
\end{proof}

Now we define a scale function $\phi$ on $\left[  0,1\right]  $ as a function
satisfying the following properties.\medskip\newline(S.1) \ $\phi$ is strictly
increasing, convex and $\phi\left(  0\right)  =0,$ $\phi^{\prime}\left(
1-\right)  <\infty.$\newline(S.2) \ For each $x>0$%
\[
\overline{\lim_{y\downarrow0}}\frac{\phi\left(  xy\right)  }{\phi\left(
y\right)  }<\infty.
\]
(S.3) \ For each $x\in(0,1]$%
\[
\underset{y\downarrow0}{\underline{\lim}}\frac{\phi\left(  xy\right)  }%
{\phi\left(  y\right)  }>0
\]
The property (S.1) enables us to extend $\phi$ linearly to $[1,\infty)$,
namely%
\[
\phi\left(  x\right)  =\phi\left(  1\right)  +\phi^{\prime}\left(  1-\right)
\left(  x-1\right)
\]
for $x>1.$ Then $\phi$ becomes non-negative, convex and non-decreasing
function on $[0,\infty).$ Throughout the paper $\phi$ is always extended to
$[1,\infty)$ linearly in this way. A regularly varying function at $0$
satisfies the condition (S.2), (S.3). Set%
\[
C_{+}(x)=\sup_{y>0}\frac{\phi\left(  xy\right)  }{\phi\left(  y\right)
}<\infty,\text{ \ \ \ }C_{-}(x)=\inf_{y\in(0,1]}\frac{\phi\left(  xy\right)
}{\phi\left(  y\right)  }>0.
\]
Then $C_{+}$ becomes non-negative, convex and non-decreasing on $[0,\infty).$
It satisfies the submultiplicative property%
\[
C_{+}(xy)\leq C_{+}(x)C_{+}(y)
\]
for any $x,y>0,$ hence, setting%
\[
\alpha_{+}=\sup_{x>1}\frac{\log C_{+}\left(  e^{x}\right)  }{x}\in
\lbrack0,\infty)
\]
we see%
\[
C_{+}(x)\leq x^{\alpha_{+}}%
\]
holds for any $x\geq e.$ $\alpha_{+}$ should be not less than $1$ due to the
convexity of $C_{+}.$ Since%
\[
\phi\left(  xy\right)  \leq C_{+}(x)\phi\left(  y\right)
\]
holds for any $x,y>0,$ we have%
\[
\phi\left(  x\right)  \geq\frac{\phi\left(  1\right)  }{C_{+}\left(
1/x\right)  }\geq\phi\left(  1\right)  x^{\alpha_{+}}%
\]
for any $x\in\left[  0,1/e\right]  .$ Therefore the property (S.2) restricts
$\phi$ not to decay faster than with a power order.%
\begin{equation}
\phi\left(  xy\right)  \leq C_{+}(x)\phi\left(  y\right)  \label{22}%
\end{equation}
$C_{-}$ satisfies%
\[
C_{-}(xy)\geq C_{-}(x)C_{-}(y)
\]
for any $x,y\in\left[  0,1\right]  .$ Typical examples for functions
satisfying (S.1)$\thicksim$(S.3) are%
\[
\phi\left(  x\right)  =x^{\alpha},\text{ \ \ }x^{\alpha}\left(  c-\log
x\right)  ,
\]
where $\alpha\geq1$ and $c$ is a sufficiently large positive constant.

\begin{lemma}
\label{l9}Let $\left\{  Y_{n}\right\}  _{n\geq1}$ be a sequence of identically
distributed non-negative random variables with mean $\mu.$ and $\left\{
\lambda_{n}\right\}  _{n\geq1}$ be a non-negative sequence satisfying%
\[
\sum_{n=1}^{\infty}\lambda_{n}<\infty,
\]
and set%
\[
X=\sum_{n=1}^{\infty}\lambda_{n}Y_{n}.
\]
Then, we have\newline$(1)$ \ If $\phi$ satisfies $(S.1),$ then%
\[
\phi\left(  EX\right)  \leq E\phi\left(  X\right)  .
\]
$(2)$ \ If $\phi$ satisfies $(S.1),(S.2),$ then%
\[
E\phi\left(  X\right)  \leq\left(  EC_{+}\left(  \frac{Y_{1}}{\mu}\right)
\right)  \phi\left(  EX\right)  .
\]

\end{lemma}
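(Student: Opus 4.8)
The plan is to treat the two parts separately: part (1) is an immediate application of Jensen's inequality, while part (2) requires a two-step reduction that converts the weighted sum into a single copy of $Y_1$ and then applies the growth bound (\ref{22}).

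For part (1), note first that $X$ is integrable, since $EX=\sum_{n\geq1}\lambda_n EY_n=\mu\sum_{n\geq1}\lambda_n<\infty$. Because $\phi$ satisfies (S.1) it is convex on all of $[0,\infty)$ after the linear extension, so Jensen's inequality gives $\phi(EX)\leq E\phi(X)$ directly. The only point worth a word is that $\phi$ is finite-valued everywhere after the extension, so Jensen applies with no modification; if $E\phi(X)=\infty$ the inequality is trivial.

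For part (2), put $\Lambda=\sum_{n\geq1}\lambda_n$ (we may assume $\Lambda>0$, else $X=0$ and both bounds are trivial) and set $p_n=\lambda_n/\Lambda$, so that $\sum_{n\geq1}p_n=1$ and $X=\Lambda\sum_{n\geq1}p_nY_n$. Applying the convexity of $\phi$ to the probability weights $\{p_n\}$, pointwise in $\omega$, yields $\phi(X)=\phi\big(\sum_{n\geq1}p_n\,\Lambda Y_n\big)\leq\sum_{n\geq1}p_n\,\phi(\Lambda Y_n)$. Taking expectations, interchanging $E$ with the non-negative sum by Tonelli, and using that the $Y_n$ are identically distributed, the sum collapses: $E\phi(X)\leq\sum_{n\geq1}p_n\,E\phi(\Lambda Y_n)=E\phi(\Lambda Y_1)$. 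It then remains to estimate $E\phi(\Lambda Y_1)$ in terms of $\phi(EX)$. Since $EX=\mu\Lambda$, I rewrite $\Lambda Y_1=(Y_1/\mu)\cdot\mu\Lambda=(Y_1/\mu)\cdot EX$ and invoke (\ref{22}), that is $\phi(xy)\leq C_+(x)\phi(y)$, with $x=Y_1/\mu$ and $y=EX$, to get $\phi(\Lambda Y_1)\leq C_+(Y_1/\mu)\,\phi(EX)$ pointwise. Taking expectations gives $E\phi(\Lambda Y_1)\leq\big(EC_+(Y_1/\mu)\big)\phi(EX)$, and combining with the previous bound finishes part (2).

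The routine ingredients are the two interchanges of expectation and summation, justified by non-negativity via Tonelli, and the use of identical distribution. The one step that deserves the most care is the countable form of Jensen's inequality used in part (2): since the weights $p_n$ form a probability measure on the index set and $\phi$ is convex, non-decreasing, with $\phi(0)=0$, the inequality $\phi(\sum_n p_n z_n)\leq\sum_n p_n\phi(z_n)$ for non-negative $z_n$ follows by monotone convergence from the finite case, and it holds trivially when $\sum_n p_n z_n=\infty$. No use is made of property (S.3).
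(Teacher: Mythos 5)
Your proof is correct, and part (1) matches the paper exactly (Jensen). For part (2) you use the same two ingredients as the paper --- the defining inequality $\phi(xy)\leq C_{+}(x)\phi(y)$ of (\ref{22}) and a countable convexity step --- but in the opposite order, which makes the decomposition genuinely different. The paper writes $\phi(X)=\phi\bigl(m\mu\sum_{n}m_{n}Y_{n}/\mu\bigr)\leq\phi(m\mu)\,C_{+}\bigl(\sum_{n}m_{n}Y_{n}/\mu\bigr)$, i.e.\ it applies (\ref{22}) first to factor out $\phi(EX)$, and then uses the convexity of $C_{+}$ to distribute over the sum; you instead apply the countable Jensen inequality to $\phi$ itself, collapsing $E\phi(X)\leq E\phi(\Lambda Y_{1})$ by identical distribution, and only then invoke (\ref{22}) termwise on the single variable $\Lambda Y_{1}=(Y_{1}/\mu)\cdot EX$. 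Both routes land on the same constant $EC_{+}(Y_{1}/\mu)$. Your version has the small advantage of never needing the convexity of $C_{+}$ (which the paper asserts but does not prove; it follows since $C_{+}$ is a supremum of convex functions), only the convexity of $\phi$ guaranteed by (S.1); the paper's version avoids the countable Jensen step for $\phi$ at the price of using it for $C_{+}$. Your handling of the edge cases ($\Lambda=0$, the event $\{Y_{1}=0\}$ where $\phi(0)=0$ saves (\ref{22}), and the a.s.\ finiteness of $X$) is careful and correct.
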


\begin{proof}
Jensen's inequality implies the inequality in (1). To show the second
inequality we set%
\[
m_{n}=m^{-1}\lambda_{n},\text{ \ \ }m=\sum_{k=1}^{\infty}\lambda_{k}.
\]
Then (\ref{22}) implies%
\[
\phi\left(  X\right)  =\phi\left(  m\mu\sum_{n=1}^{\infty}m_{n}\frac{Y_{n}%
}{\mu}\right)  \leq\phi\left(  m\mu\right)  C_{+}\left(  \sum_{n=1}^{\infty
}m_{n}\frac{Y_{n}}{\mu}\right)
\]
Since the function $C_{+}$ is convex, we have%
\[
C_{+}\left(  \sum_{n=1}^{\infty}m_{n}\frac{Y_{n}}{\mu}\right)  \leq\sum
_{n=1}^{\infty}m_{n}C_{+}\left(  \frac{Y_{n}}{\mu}\right)  ,
\]
and%
\[
E\phi\left(  X\right)  \leq\phi\left(  m\mu\right)  \sum_{n=1}^{\infty}%
m_{n}EC_{+}\left(  \frac{Y_{n}}{\mu}\right)  =\phi\left(  EX\right)
EC_{+}\left(  \frac{Y_{1}}{\mu}\right)  .
\]

\end{proof}

Let $X$ be the same as was defined in (\ref{10}) and set%
\[
C_{\phi}=EC_{+}\left(  \frac{Y_{1}}{\mu}\right)  =\int_{0}^{\infty}%
te^{-t}C_{+}\left(  t/2\right)  dt<\infty.
\]

\begin{lemma}
\label{l7}We have\newline$(1)$ If $\phi$ satisfies $(S.1),$ then%
\[
E\left(  \phi\left(  X\right)  e^{\lambda X}\right)  \geq\varphi_{\lambda
}\left(  a\right)  ^{-2}\phi\left(  \int\nolimits_{-\infty}^{a}\varphi
_{\lambda}\left(  x\right)  ^{2}dm\left(  x\right)  \int\nolimits_{x}%
^{a}\varphi_{\lambda}\left(  y\right)  ^{-2}dy\right)  .
\]
$(2)$\ If $\phi$ satisfies $(S.1),(S.2),$ then%
\[
E\left(  \phi\left(  X\right)  e^{\lambda X}\right)  \leq C_{\phi}%
\varphi_{\lambda}\left(  a\right)  ^{-2}\phi\left(  \int\nolimits_{-\infty
}^{a}\varphi_{\lambda}\left(  x\right)  ^{2}dm\left(  x\right)  \int
\nolimits_{x}^{a}\varphi_{\lambda}\left(  y\right)  ^{-2}dy\right)  .
\]

\end{lemma}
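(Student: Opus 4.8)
The plan is to exploit the probabilistic representation (\ref{10}) together with an exponential change of measure, after which Lemma \ref{l9} does essentially all the work. Fix $a<l$ and $\lambda<0$. Since $Ee^{\lambda X}=\varphi_{\lambda}(a)^{-2}\in(0,\infty)$, I would introduce the tilted probability measure $Q$ with $dQ/dP=\varphi_{\lambda}(a)^{2}e^{\lambda X}$, so that for every non-negative $\phi$
\[
E\big(\phi(X)e^{\lambda X}\big)=\varphi_{\lambda}(a)^{-2}\,E_{Q}\phi(X).
\]
Both inequalities then reduce to sandwiching $E_{Q}\phi(X)$ between $\phi(E_{Q}X)$ and $C_{\phi}\,\phi(E_{Q}X)$, which is precisely the conclusion of Lemma \ref{l9}, provided the law of $X$ under $Q$ retains the required product structure.

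That structure is preserved. Writing $X=\sum_{n}\mu_{n}^{-1}Y_{n}$ with $Y_{n}$ independent, each $\mathrm{Gamma}(2,1)$, the tilt $e^{\lambda X}=\prod_{n}e^{\lambda\mu_{n}^{-1}Y_{n}}$ factorizes, so under $Q$ the summands stay independent and each $\mu_{n}^{-1}Y_{n}$ becomes $\mathrm{Gamma}(2,\mu_{n}-\lambda)$. Setting $\lambda_{n}'=(\mu_{n}-\lambda)^{-1}$ and $Y_{n}'=(\mu_{n}-\lambda)\mu_{n}^{-1}Y_{n}$, one has under $Q$ that $X=\sum_{n}\lambda_{n}'Y_{n}'$ with the $Y_{n}'$ i.i.d.\ $\mathrm{Gamma}(2,1)$ of mean $\mu=2$, and $\sum_{n}\lambda_{n}'\le\sum_{n}\mu_{n}^{-1}=M(a)<\infty$ by (\ref{35}). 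Lemma \ref{l9} therefore applies verbatim under $Q$, with the same constant $C_{\phi}=EC_{+}(Y_{1}/\mu)$, yielding $\phi(E_{Q}X)\le E_{Q}\phi(X)\le C_{\phi}\,\phi(E_{Q}X)$, hence the two asserted estimates once $E_{Q}X$ is identified.

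The remaining point, and the real content, is to evaluate $E_{Q}X$. Differentiating the cumulant generating function gives
\[
E_{Q}X=\frac{d}{d\lambda}\log Ee^{\lambda X}=-2\,\frac{\partial_{\lambda}\varphi_{\lambda}(a)}{\varphi_{\lambda}(a)}=2\sum_{n\ge1}\frac{1}{\mu_{n}-\lambda},
\]
the last equality from $\varphi_{\lambda}(a)=\prod_{n}(1-\lambda/\mu_{n})$. I would then recognize $\sum_{n}(\mu_{n}-\lambda)^{-1}=\mathrm{tr}(-L_{a}-\lambda)^{-1}$, where $-L_{a}$ denotes $-L$ on $(-\infty,a]$ with Dirichlet condition at $a$, and compute this trace as $\int_{-\infty}^{a}G_{\lambda}(x,x)\,dm(x)$. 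The Dirichlet Green function on $(-\infty,a]$ is built from $\varphi_{\lambda}$ and its companion solution $\varphi_{\lambda}(x)\int_{x}^{a}\varphi_{\lambda}(y)^{-2}dy$ (exactly as in (\ref{4}), but truncated at $a$), whose Wronskian is constant; its diagonal is then $G_{\lambda}(x,x)=\varphi_{\lambda}(x)^{2}\int_{x}^{a}\varphi_{\lambda}(y)^{-2}dy$, so that
\[
E_{Q}X=2\int_{-\infty}^{a}\varphi_{\lambda}(x)^{2}\Big(\int_{x}^{a}\varphi_{\lambda}(y)^{-2}dy\Big)\,dm(x).
\]

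This Green-function/trace identity is where the string structure genuinely enters, so I expect establishing it to be the main obstacle; the tilting and the invocation of Lemma \ref{l9} are routine by comparison. The factor $2=EY_{1}$ is harmless and is absorbed into the constants: in the lower bound it only helps, since $\phi$ is non-decreasing; in the upper bound it costs at most a factor $C_{+}(2)$ by the growth property (S.2), which can be folded into the constant. This completes the plan.
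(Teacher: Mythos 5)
Your proposal is essentially the paper's own proof: your tilted measure $Q$ is exactly the law of the auxiliary variable $Z$ that the paper introduces via $Ee^{\mu Z}=\varphi_{\lambda+\mu}(a)^{-2}\varphi_{\lambda}(a)^{2}$, and the remaining steps (applying Lemma \ref{l9} to this variable and identifying its mean with $\mathrm{tr}\,G_{\lambda}$ through the Dirichlet Green function on $(-\infty,a]$) coincide with the paper's. Your explicit tracking of the factor $2=EY_{1}$ is in fact more careful than the paper, whose proof writes $EZ=\sum_{j}(\mu_{j}-\lambda)^{-1}$ even though the stated moment generating function of $Z$ gives $EZ=2\sum_{j}(\mu_{j}-\lambda)^{-1}$; absorbing the $2$ as you do yields the upper bound with constant $C_{\phi}C_{+}(2)$ in place of $C_{\phi}$, which is harmless everywhere the lemma is used.
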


\begin{proof}
For a fixed $\lambda<0$ let $Z$ be a non-negative random variable satisfying%
\[
Ee^{\mu Z}=\varphi_{\lambda+\mu}\left(  a\right)  ^{-2}\varphi_{\lambda
}\left(  a\right)  ^{2}=\prod\limits_{n=1}^{\infty}\left(  1-\frac{\mu}%
{\mu_{n}-\lambda}\right)  ^{-2}.
\]
Then, note an identity%
\begin{equation}
E\left(  \phi\left(  X\right)  e^{\lambda X}\right)  =\varphi_{\lambda}\left(
a\right)  ^{-2}E\left(  \phi\left(  Z\right)  \right)  , \label{32}%
\end{equation}
which can be shown from the observation%
\[
\frac{\partial^{k}}{\partial\lambda^{k}}\varphi_{\lambda}\left(  a\right)
^{-2}=\varphi_{\lambda}\left(  a\right)  ^{-2}\left.  \frac{\partial^{k}%
}{\partial\mu^{k}}\left(  \varphi_{\lambda+\mu}\left(  a\right)  ^{-2}%
\varphi_{\lambda}\left(  a\right)  ^{2}\right)  \right\vert _{\mu=0}%
\]
for any $k\geq0,$ because this implies the identity when $\phi\left(
x\right)  =x^{k}.$ To apply Lemma \ref{l9} to $Z$ we need to compute $EZ.$ If
we denote the Green operator for $L$ on $(-\infty,a]$ with Dirichlet boundary
condition at $a$ by $G_{\lambda},$ then%
\[
G_{\lambda}\left(  x,y\right)  =\varphi_{\lambda}\left(  y\right)
\varphi_{\lambda}\left(  x\right)  \int_{x}^{a}\varphi_{\lambda}\left(
z\right)  ^{-2}dz\text{ \ \ for }x\geq y
\]
hence%
\[
EZ=\sum_{j=1}^{\infty}\frac{1}{\mu_{j}-\lambda}=\mathrm{tr}G_{\lambda}%
=\int\nolimits_{-\infty}^{a}\varphi_{\lambda}\left(  x\right)  ^{2}dm\left(
x\right)  \int\nolimits_{x}^{a}\varphi_{\lambda}\left(  y\right)  ^{-2}dy
\]
holds, and we have the inequalities in the statement.
\end{proof}

The right side of the inequalities in Lemma \ref{l7} can be estimated further.

\begin{lemma}
\label{l2}For $\lambda<0$ the following inequalities are valid.\newline$(1)$ $%
{\displaystyle\int\nolimits_{-\infty}^{a}}
\varphi_{\lambda}\left(  x\right)  ^{2}dm\left(  x\right)
{\displaystyle\int\nolimits_{x}^{a}}
\varphi_{\lambda}\left(  y\right)  ^{-2}dy\geq M(a)\varphi_{\lambda}\left(
a\right)  ^{-2}\medskip$\newline$(2)$ $%
{\displaystyle\int\nolimits_{-\infty}^{a}}
\varphi_{\lambda}\left(  x\right)  ^{2}dm\left(  x\right)
{\displaystyle\int\nolimits_{x}^{a}}
\varphi_{\lambda}\left(  y\right)  ^{-2}dy\leq M(a)\wedge\left(  \dfrac
{\log\varphi_{\lambda}\left(  a\right)  }{-\lambda}\right)  $
\end{lemma}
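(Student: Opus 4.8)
The plan is to treat the three inequalities in increasing order of difficulty, exploiting throughout that for $\lambda<0$ the function $\varphi_{\lambda}$ is positive, non-decreasing in $x$, and bounded below by $1$ (all visible from the non-negative series (\ref{6})), together with the differential relations obtained by differentiating the integral equation,
\[
\varphi_{\lambda}'(x)=-\lambda\int_{-\infty}^{x}\varphi_{\lambda}(y)\,dm(y),\qquad d\varphi_{\lambda}'(x)=-\lambda\,\varphi_{\lambda}(x)\,dm(x).
\]

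For the lower bound (1), I would simply discard the parts of the integrand that only help. Since $\varphi_{\lambda}(y)\le\varphi_{\lambda}(a)$ on $[x,a]$ one has $\int_{x}^{a}\varphi_{\lambda}(y)^{-2}dy\ge(a-x)\varphi_{\lambda}(a)^{-2}$, and since $\varphi_{\lambda}(x)^{2}\ge1$ the double integral is at least $\varphi_{\lambda}(a)^{-2}\int_{-\infty}^{a}(a-x)\,dm(x)=M(a)\varphi_{\lambda}(a)^{-2}$. The first half of (2), the bound by $M(a)$, is the mirror image: monotonicity gives $\varphi_{\lambda}(x)^{2}/\varphi_{\lambda}(y)^{2}\le1$ for $y\ge x$, hence $\varphi_{\lambda}(x)^{2}\int_{x}^{a}\varphi_{\lambda}(y)^{-2}dy\le a-x$, and integrating against $dm$ returns $M(a)$.

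The substantive part is the bound of the double integral by $(\log\varphi_{\lambda}(a))/(-\lambda)$. Here I would first convert the $dm$-integration into integration against $d\varphi_{\lambda}'$ via $\varphi_{\lambda}(x)^{2}\,dm(x)=(-\lambda)^{-1}\varphi_{\lambda}(x)\,d\varphi_{\lambda}'(x)$, so that, writing $F(x)=\int_{x}^{a}\varphi_{\lambda}(y)^{-2}dy$ and $G(x)=\varphi_{\lambda}(x)F(x)$, the double integral equals $(-\lambda)^{-1}\int_{-\infty}^{a}G(x)\,d\varphi_{\lambda}'(x)$. Then I integrate by parts. Since $G$ is absolutely continuous with $G'=\varphi_{\lambda}'F-\varphi_{\lambda}^{-1}$, and using $\int_{-\infty}^{a}\varphi_{\lambda}'/\varphi_{\lambda}\,dx=\log\varphi_{\lambda}(a)$ (recall $\varphi_{\lambda}(-\infty)=1$), the parts formula yields
\[
\int_{-\infty}^{a}G\,d\varphi_{\lambda}'=\bigl[G\varphi_{\lambda}'\bigr]_{-\infty}^{a}-\int_{-\infty}^{a}(\varphi_{\lambda}')^{2}F\,dx+\log\varphi_{\lambda}(a).
\]
As the middle term is $\le0$ and the boundary term will vanish, this gives $\int_{-\infty}^{a}G\,d\varphi_{\lambda}'\le\log\varphi_{\lambda}(a)$, whence the claim after dividing by $-\lambda$.

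The one step needing care, and the main obstacle, is the vanishing of $\bigl[G\varphi_{\lambda}'\bigr]_{-\infty}^{a}$. At $a$ it is immediate, since $F(a)=0$ forces $G(a)=0$. At $-\infty$ the term has the indeterminate form $\infty\cdot0$, because $F(x)\sim a-x\to\infty$ while $\varphi_{\lambda}'(x)\to0$; this is precisely where the entrance condition (\ref{1}) is used. I would estimate $\varphi_{\lambda}'(x)\le-\lambda\,\varphi_{\lambda}(x)m(x)$ and $F(x)\le a-x$, reducing matters to showing $(a-x)m(x)\to0$ as $x\to-\infty$, and then bound $(a-x)m(x)=\int_{-\infty}^{x}(a-x)\,dm(y)\le\int_{-\infty}^{x}(a-y)\,dm(y)$, whose right-hand side is the tail of the convergent integral $M(a)=\int_{-\infty}^{a}(a-y)\,dm(y)$ and hence tends to $0$. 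Finally I would note that the parts formula remains valid despite possible jumps of $\varphi_{\lambda}'$ at atoms of $dm$: $\varphi_{\lambda}$ is absolutely continuous and $G$ is continuous, so no atom of $d\varphi_{\lambda}'$ contributes a spurious term.
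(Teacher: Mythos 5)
Your proposal is correct and follows essentially the same route as the paper: monotonicity of $\varphi_{\lambda}$ together with $\varphi_{\lambda}\geq1$ for (1) and the $M(a)$ bound in (2), and for the $\log$ bound the substitution $\varphi_{\lambda}(x)^{2}dm(x)=(-\lambda)^{-1}\varphi_{\lambda}(x)\,d\varphi_{\lambda}'(x)$ followed by integration by parts, discarding the nonpositive term $-\int(\varphi_{\lambda}')^{2}F\,dx$ and using $\int_{-\infty}^{a}\varphi_{\lambda}'/\varphi_{\lambda}\,dx=\log\varphi_{\lambda}(a)$. Your treatment of the boundary term at $-\infty$ via $(a-x)m(x)\le\int_{-\infty}^{x}(a-y)\,dm(y)\to0$ is in fact slightly more detailed than the paper's one-line asymptotic claim.
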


\begin{proof}
The inequality (1) and the first inequality of (2) follow from the
monotonicity of $\varphi_{\lambda}\left(  z\right)  $, namely we have%
\[
\int\nolimits_{x}^{a}\varphi_{\lambda}\left(  y\right)  ^{-2}dy\geq
\varphi_{\lambda}\left(  a\right)  ^{-2}\left(  a-x\right)  ,\text{ \ }%
\int\nolimits_{x}^{a}\varphi_{\lambda}\left(  y\right)  ^{-2}dy\leq
\varphi_{\lambda}\left(  x\right)  ^{-2}\left(  a-x\right)  ,
\]
which implies%
\[
\int\nolimits_{-\infty}^{a}\varphi_{\lambda}\left(  x\right)  ^{2}dm\left(
x\right)  \int\nolimits_{x}^{a}\varphi_{\lambda}\left(  y\right)  ^{-2}%
dy\geq\varphi_{\lambda}\left(  a\right)  ^{-2}\int\nolimits_{-\infty}%
^{a}\left(  a-x\right)  dm\left(  x\right)  =\varphi_{\lambda}\left(
a\right)  ^{-2}M(a),
\]
and%
\[
\int\nolimits_{-\infty}^{a}\varphi_{\lambda}\left(  x\right)  ^{2}dm\left(
x\right)  \int\nolimits_{x}^{a}\varphi_{\lambda}\left(  y\right)  ^{-2}%
dy\leq\int\nolimits_{-\infty}^{a}\left(  a-x\right)  dm(x)=M(a).
\]
The second inequality of (2) follows by using the equation satisfied by
$\varphi_{\lambda}\left(  x\right)  $%
\[
d\varphi_{\lambda}^{\prime}\left(  y\right)  =-\lambda\varphi_{\lambda}\left(
y\right)  dm\left(  y\right)  ,
\]
which yields%
\begin{align*}
&  -\lambda\int\nolimits_{-\infty}^{a}\varphi_{\lambda}\left(  y\right)
^{2}dm\left(  y\right)  \int\nolimits_{y}^{a}\varphi_{\lambda}\left(
z\right)  ^{-2}dz\\
&  =\int\nolimits_{-\infty}^{a}\varphi_{\lambda}\left(  y\right)
d\varphi_{\lambda}^{\prime}\left(  y\right)  \int\nolimits_{y}^{a}%
\varphi_{\lambda}\left(  z\right)  ^{-2}dz\\
&  =\left.  \varphi_{\lambda}\left(  y\right)  \varphi_{\lambda}^{\prime
}\left(  y\right)  \int\nolimits_{y}^{a}\varphi_{\lambda}\left(  z\right)
^{-2}dz\right\vert _{-\infty}^{a}-\int\nolimits_{-\infty}^{a}\varphi_{\lambda
}^{\prime}\left(  y\right)  ^{2}dy\int\nolimits_{y}^{a}\varphi_{\lambda
}\left(  z\right)  ^{-2}dz+\int\nolimits_{-\infty}^{a}\frac{\varphi_{\lambda
}^{\prime}\left(  y\right)  }{\varphi_{\lambda}\left(  y\right)  }dy.
\end{align*}
Noting%
\[
\varphi_{\lambda}\left(  y\right)  \varphi_{\lambda}^{\prime}\left(  y\right)
\int\nolimits_{y}^{a}\varphi_{\lambda}\left(  z\right)  ^{-2}dz\underset
{y\rightarrow-\infty}{\thicksim}-\lambda m(y)\left(  a-y\right)
\underset{y\rightarrow-\infty}{\rightarrow}0,
\]
we see%
\begin{align*}
-\lambda\int\nolimits_{-\infty}^{a}\varphi_{\lambda}\left(  y\right)
^{2}dm\left(  y\right)  \int\nolimits_{y}^{a}\varphi_{\lambda}\left(
z\right)  ^{-2}dz  &  =\log\varphi_{\lambda}\left(  a\right)  -\int
\nolimits_{-\infty}^{a}\varphi_{\lambda}^{\prime}\left(  y\right)  ^{2}%
dy\int\nolimits_{y}^{a}\varphi_{\lambda}\left(  z\right)  ^{-2}dz\\
&  \leq\log\varphi_{\lambda}\left(  a\right)  ,
\end{align*}
which completes the proof.
\end{proof}

As the last lemma in this section we have

\begin{lemma}
\label{l11}The following two estimates hold.\newline$(1)$ For any function
$\phi$ satisfying $(S.1)$ it holds that%
\[%
{\displaystyle\int\nolimits_{0}^{\infty}}
p(t)\phi\left(  t\right)  e^{\lambda t}dt\geq%
{\displaystyle\int\nolimits_{-\infty}^{l}}
\phi\left(  M(x)\varphi_{\lambda}\left(  x\right)  ^{-2}\right)
\varphi_{\lambda}\left(  x\right)  ^{-2}dx
\]
$(2)$ For any function $\phi$ satisfying $(S.1),(S.2)$ it holds that%
\begin{align*}%
{\displaystyle\int\nolimits_{0}^{\infty}}
p(t)\phi\left(  t\right)  e^{\lambda t}dt  &  \leq C_{\phi}%
{\displaystyle\int\nolimits_{-\infty}^{l}}
\phi\left(  M(x)\wedge\left(  \dfrac{\log\varphi_{\lambda}\left(  x\right)
}{-\lambda}\right)  \right)  \varphi_{\lambda}\left(  x\right)  ^{-2}dx\\
&  \leq C_{\phi}%
{\displaystyle\int\nolimits_{-\infty}^{a}}
\phi\left(  M(x)\right)  \varphi_{\lambda}\left(  x\right)  ^{-2}dx+C_{\phi
}\dfrac{-\lambda}{\varphi_{\lambda}^{\prime}\left(  a\right)  }%
{\displaystyle\int\nolimits_{0}^{\infty}}
\phi\left(  t\right)  e^{\lambda t}dt,
\end{align*}

\end{lemma}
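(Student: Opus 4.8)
The plan is to reduce everything to the three preceding lemmas by recognizing the left-hand side as the integral transform supplied by Lemma \ref{l15}. Taking $f(t)=\phi(t)e^{\lambda t}$, which is non-negative and measurable (here $\lambda<0$ makes $e^{\lambda t}$ bounded and the linear extension of $\phi$ keeps $f$ integrable against $p$), identity (\ref{26}) gives
\[
\int_0^\infty p(t)\phi(t)e^{\lambda t}\,dt=\int_{-\infty}^l E\bigl(\phi(X(x))e^{\lambda X(x)}\bigr)\,dx.
\]
Thus both assertions follow once I bound the integrand $E(\phi(X(x))e^{\lambda X(x)})$ from below and above, with $x$ now playing the role of the boundary $a$ in Lemmas \ref{l7} and \ref{l2}.

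For (1) I would chain Lemma \ref{l7}(1) with Lemma \ref{l2}(1): since $\phi$ is non-decreasing by (S.1), the lower bound $M(x)\varphi_\lambda(x)^{-2}$ for the inner double integral passes through $\phi$, yielding $E(\phi(X(x))e^{\lambda X(x)})\ge\varphi_\lambda(x)^{-2}\phi(M(x)\varphi_\lambda(x)^{-2})$, and integrating over $x\in(-\infty,l)$ gives the claimed inequality. For the first line of (2) I would symmetrically chain Lemma \ref{l7}(2) with Lemma \ref{l2}(2), again using monotonicity of $\phi$ to push the upper bound $M(x)\wedge(\log\varphi_\lambda(x)/(-\lambda))$ inside, obtaining $E(\phi(X(x))e^{\lambda X(x)})\le C_\phi\varphi_\lambda(x)^{-2}\phi(M(x)\wedge(\log\varphi_\lambda(x)/(-\lambda)))$, and then integrating.

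The second line of (2) is where the real work lies. I would split $\int_{-\infty}^l=\int_{-\infty}^a+\int_a^l$. On $(-\infty,a]$ the minimum is bounded by $M(x)$, producing the first term directly. On $[a,l)$ I would replace the minimum by $\log\varphi_\lambda(x)/(-\lambda)$ and change variables $u=\varphi_\lambda(x)$, then $u=e^{-\lambda t}$. The crucial input is that $\varphi_\lambda'$ is non-decreasing (from $d\varphi_\lambda'=-\lambda\varphi_\lambda\,dm$ with $\lambda<0$ and $\varphi_\lambda>0$), so $\varphi_\lambda'(x)\ge\varphi_\lambda'(a)$ for $x\ge a$; multiplying and dividing the integrand by $\varphi_\lambda'(x)$ then gives
\[
\int_a^l\phi\Bigl(\tfrac{\log\varphi_\lambda(x)}{-\lambda}\Bigr)\varphi_\lambda(x)^{-2}\,dx\le\frac{1}{\varphi_\lambda'(a)}\int_{\varphi_\lambda(a)}^\infty\phi\Bigl(\tfrac{\log u}{-\lambda}\Bigr)u^{-2}\,du,
\]
and the substitution $u=e^{-\lambda t}$, under which $u^{-2}=e^{2\lambda t}$ and $du=-\lambda e^{-\lambda t}\,dt$, collapses this to $\frac{-\lambda}{\varphi_\lambda'(a)}\int_{t_a}^\infty\phi(t)e^{\lambda t}\,dt$ with $t_a=\log\varphi_\lambda(a)/(-\lambda)\ge0$, which is bounded by extending the lower limit to $0$. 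The main obstacle I anticipate is the rigorous justification of this change of variables, since $\varphi_\lambda'$ is only a generalized one-sided derivative with possible jumps across atoms of $dm$; I would handle it by working with the continuous strictly increasing function $\varphi_\lambda$ on $[a,l)$ and the monotone lower bound $\varphi_\lambda'(x)\ge\varphi_\lambda'(a)$, rather than treating $\varphi_\lambda'$ as a genuine density.
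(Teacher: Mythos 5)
Your proposal is correct and follows essentially the same route as the paper: combine the identity of Lemma \ref{l15} (applied to $f(t)=\phi(t)e^{\lambda t}$) with Lemmas \ref{l7} and \ref{l2} and the monotonicity of $\phi$ to get (1) and the first line of (2), then handle the tail $\int_a^l$ by the substitution $z=\varphi_\lambda(x)$, the bound $\varphi_\lambda'\ge\varphi_\lambda'(a)$, and $z=e^{-\lambda t}$ — which is exactly the only step the paper writes out explicitly. Your extra caution about justifying the change of variables when $dm$ has atoms is a reasonable refinement that the paper silently omits.
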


\begin{proof}
All we have to show is an estimate of the integral%
\[%
{\displaystyle\int\nolimits_{a}^{l}}
\phi\left(  \frac{\log\varphi_{\lambda}\left(  x\right)  }{-\lambda}\right)
\varphi_{\lambda}\left(  x\right)  ^{-2}dx.
\]
Noting the monotonicity of $\varphi_{\lambda}\left(  x\right)  ,\varphi
_{\lambda}^{\prime}\left(  x\right)  $ and $\varphi_{\lambda}\left(  x\right)
\geq1,$ we see%
\begin{align*}
&
{\displaystyle\int\nolimits_{a}^{l}}
\phi\left(  \frac{\log\varphi_{\lambda}\left(  x\right)  }{-\lambda}\right)
\varphi_{\lambda}\left(  x\right)  ^{-2}dx\\
&  =%
{\displaystyle\int\nolimits_{\varphi_{\lambda}\left(  a\right)  }%
^{\varphi_{\lambda}\left(  l\right)  }}
\phi\left(  \frac{\log z}{-\lambda}\right)  \frac{1}{z^{2}\varphi_{\lambda
}^{\prime}\left(  \varphi_{\lambda}^{-1}\left(  z\right)  \right)  }dz\\
&  \leq\frac{1}{\varphi_{\lambda}^{\prime}\left(  \varphi_{\lambda}%
^{-1}\left(  \varphi_{\lambda}\left(  a\right)  \right)  \right)  }%
{\displaystyle\int\nolimits_{\varphi_{\lambda}\left(  a\right)  }%
^{\varphi_{\lambda}\left(  l\right)  }}
\phi\left(  \frac{\log z}{-\lambda}\right)  \frac{dz}{z^{2}}\leq\frac
{-\lambda}{\varphi_{\lambda}^{\prime}\left(  a\right)  }%
{\displaystyle\int\nolimits_{0}^{\infty}}
\phi\left(  t\right)  e^{\lambda t}dt.
\end{align*}

\end{proof}

\section{Continuity of the correspondence from $\mathcal{S}$ to $\mathcal{E}$}

In this section we give a partial converse of Theorem \ref{t9}. The lemma
below will be useful later.

\begin{lemma}
\label{l4}Let $m_{n}\in\mathcal{E}$ and $\sigma_{n}$ be its spectral function.
Suppose%
\begin{equation}
\lim_{n\rightarrow\infty}M_{n}\left(  l_{n}\right)  =0 \label{8}%
\end{equation}
holds. Then $\sigma_{n}\left(  \xi\right)  \rightarrow0$ for any $\xi>0.$
\end{lemma}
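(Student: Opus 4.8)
The plan is to read the hypothesis $M_n(l_n)\to 0$ spectrally, through exactly the trace identity already exploited in (\ref{35}). Recall that (\ref{35}) identifies $M(a)$ with $\mathrm{tr}(-L)^{-1}$ for the operator on $(-\infty,a]$ with Dirichlet condition at $a$. I would run the same computation for the whole string: at $\lambda=0$ one has $\varphi_0\equiv 1$, so by (\ref{4}) $f_0(x)=l_n-x$ and hence $g_0^{(n)}(x,x)=\varphi_0(x)f_0(x)=l_n-x$. First note that since $m_n$ is non-decreasing and not identically $0$, one has $M_n(\infty)=\infty$ unless $l_n<\infty$; thus finiteness of $M_n(l_n)$ (guaranteed for large $n$ by (\ref{8})) forces $l_n<\infty$, so the kernel above is finite and
\[
\mathrm{tr}(-L_n)^{-1}=\int_{-\infty}^{l_n}g_0^{(n)}(x,x)\,dm_n(x)=\int_{-\infty}^{l_n}(l_n-x)\,dm_n(x)=M_n(l_n).
\]
A non-negative self-adjoint operator whose inverse has finite trace has purely discrete spectrum $\{\nu_k^{(n)}\}_{k\ge1}$ with $\sum_k(\nu_k^{(n)})^{-1}=M_n(l_n)$, so the bottom eigenvalue satisfies $\nu_1^{(n)}\ge M_n(l_n)^{-1}$. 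Since $\sigma_n$ is supported on the spectrum of $-L_n$, I conclude $\mathrm{supp}\,\sigma_n\subset[M_n(l_n)^{-1},\infty)$, i.e. $\sigma_n(\xi)=0$ for every $\xi<M_n(l_n)^{-1}$.

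The lemma then follows at once: for a fixed $\xi>0$, (\ref{8}) gives $M_n(l_n)^{-1}\to\infty$, so $M_n(l_n)^{-1}>\xi$ for all large $n$ and therefore $\sigma_n(\xi)=0$ eventually; in particular $\sigma_n(\xi)\to 0$. (This even yields the stronger statement that $\sigma_n(\xi)$ is eventually $0$.)

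If one prefers to stay strictly within the tools already assembled and avoid quoting the trace-class criterion, the same lower bound on the spectrum can be obtained by truncation, which I would present as the rigorous route. For $a<l_n$ the string obtained by setting $m_n=\infty$ on $(a,\infty)$ realises the Dirichlet problem at $a$; its spectral function $\sigma_n^{(a)}$ is supported on the zeros $\{\mu_k^{(n)}(a)\}$ of $\varphi_\lambda^{(n)}(a)$, and (\ref{35}) gives $\sum_k\mu_k^{(n)}(a)^{-1}=M_n(a)\le M_n(l_n)$, whence $\mu_k^{(n)}(a)\ge M_n(l_n)^{-1}$ for all $k$. Thus each $\sigma_n^{(a)}$ vanishes on $[0,M_n(l_n)^{-1})$. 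Letting $a\uparrow l_n$ along a sequence, these truncated strings converge to $m_n$ in the sense (A),(B) — condition (B) holds because $M_n^{(a)}(x)=M_n(x)$ for $x<a$, uniformly dominated by $M_n(x)\to0$ — so Theorem \ref{t9} gives $\sigma_n^{(a)}(\xi)\to\sigma_n(\xi)$ at continuity points, and the vanishing on $[0,M_n(l_n)^{-1})$ is inherited by $\sigma_n$.

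The one point needing genuine care — and the main obstacle — is the justification that $g_0^{(n)}(x,x)=l_n-x$ really is the diagonal of the kernel of $(-L_n)^{-1}$ under the boundary condition implicit in $f_\lambda$ at $l_+$, so that the trace identity $\mathrm{tr}(-L_n)^{-1}=M_n(l_n)$ is legitimate; in the truncation variant the corresponding care is in verifying the convergence hypotheses of Theorem \ref{t9} for the truncated strings and in identifying their spectra with the zeros of $\varphi_\lambda^{(n)}(a)$. Once this bookkeeping is settled, the spectral conclusion, and hence $\sigma_n(\xi)\to0$, is immediate.
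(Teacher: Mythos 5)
Your argument is correct, but it takes a genuinely different route from the paper. The paper's proof is soft: it translates the string by $l_{n}$ (using the translation invariance of the spectral measure established in Section 2), observes that $\widetilde{M}_{n}(0)=M_{n}(l_{n})\rightarrow0$ forces the translated strings to converge in $\mathcal{E}$ to the degenerate string $(\ref{13})$, whose spectral measure vanishes identically, and then simply invokes the forward continuity Theorem \ref{t9} --- two lines, no new operator theory. You instead extract a quantitative spectral gap: extending the trace identity $(\ref{35})$ from the Dirichlet problem on $(-\infty,a]$ to the full string gives $\mathrm{tr}\left(-L_{n}\right)^{-1}=M_{n}(l_{n})$, hence the bottom of the spectrum is at least $M_{n}(l_{n})^{-1}$ and $\mathrm{supp}\,d\sigma_{n}\subset[M_{n}(l_{n})^{-1},\infty)$. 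This yields the strictly stronger conclusion that $\sigma_{n}(\xi)$ is eventually exactly zero on every compact set, with an explicit rate, at the price of the bookkeeping you rightly flag (trace-class/compactness of the resolvent and the identification of the diagonal of the kernel; safest done at $\lambda<0$ followed by a monotone limit $\lambda\uparrow0$, which also rules out $0$ being an eigenvalue). Your truncation variant is a sensible middle ground: it stays with the Dirichlet trace identity $(\ref{35})$ exactly as the paper states it and borrows Theorem \ref{t9} only for the inner limit $a\uparrow l_{n}$ at fixed $n$, which is a legitimate if unorthodox use of that theorem. Both of your versions go through; the paper's is shorter because it reuses Theorem \ref{t9} for the outer limit in $n$ directly, while yours buys the sharper localization of $\mathrm{supp}\,d\sigma_{n}$.
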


\begin{proof}
Since $M_{n}\left(  l_{n}\right)  <\infty,$ we have $l_{n}<\infty.$ Set
$\widetilde{m}_{n}\left(  x\right)  =m_{n}\left(  x+l_{n}\right)  .$ Then%
\begin{equation}
\widetilde{M}_{n}\left(  0\right)  \rightarrow0 \label{11}%
\end{equation}
and its spectral measure coincides with $\sigma_{n}.$ Since the condition
(\ref{11}) implies
\[
\widetilde{m}_{n}\left(  x\right)  \rightarrow\left\{
\begin{array}
[c]{ccc}%
0 & \text{for} & x<0\\
\infty & \text{for} & x>0
\end{array}
\right.
\]
in $\mathcal{E}$, Theorem \ref{t9} shows $\sigma_{n}\rightarrow0.$
\end{proof}

\begin{theorem}
\label{t8}Let $m_{n}\in\mathcal{E}$ and $\sigma_{n}$ be its spectral function
satisfying%
\[
p_{n}(t)=\int\nolimits_{0}^{\infty}e^{-t\xi}d\sigma_{n}\left(  \xi\right)
<\infty\text{ \ for any }t>0
\]
and%
\begin{equation}
\sup_{n\geq1}\int\nolimits_{0}^{1}p_{n}(t)\phi(t)dt<\infty\label{28}%
\end{equation}
for a function $\phi$ satisfying $(S.1).$ Assume there exists a non-trivial
measure $\sigma$ on $[0,\infty)$ satisfying%
\[
\sigma_{n}\left(  \xi\right)  \rightarrow\sigma\left(  \xi\right)
\]
at every point of continuity of $\sigma.$ Then%
\[
\lim_{n\rightarrow\infty}p_{n}(t)=p(t),\text{ \ \ }\underset{n\rightarrow
\infty}{\underline{\lim}}M_{n}\left(  l_{n}\right)  >0
\]
hold. Choose $c$ such that%
\[
0<c<\underset{n\rightarrow\infty}{\underline{\lim}}M_{n}\left(  l_{n}\right)
\]
and define $a_{n}$ by the solution $M_{n}\left(  a_{n}\right)  =c.$ Then there
exists a unique $m\in\mathcal{E}^{\left(  c\right)  }$ with spectral measure
$\sigma$ and it holds that $m_{n}\left(  \cdot+a_{n}\right)  \rightarrow m$ in
$\mathcal{E},$ hence $\sigma\in\mathcal{S}.$
\end{theorem}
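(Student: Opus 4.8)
The plan is to establish the three assertions in turn, the convergence $p_n\to p$, the lower bound $\underline{\lim}_n M_n(l_n)>0$, and then the existence, convergence and uniqueness of the limiting string. For $p_n(t)\to p(t)$ I would split $p_n(t)=\int_0^\infty e^{-t\xi}\,d\sigma_n(\xi)$ over $[0,R]$ and $(R,\infty)$. On $[0,R]$, with $R$ a continuity point of $\sigma$, the hypothesis $\sigma_n\to\sigma$ gives $\int_0^R e^{-t\xi}d\sigma_n\to\int_0^R e^{-t\xi}d\sigma$ since $e^{-t\xi}$ is bounded and continuous. For the tail the idea is to exploit (\ref{28}): writing $\Phi(\xi)=\int_0^1\phi(s)e^{-s\xi}\,ds$ and interchanging the order of integration, (\ref{28}) becomes $\sup_n\int_0^\infty\Phi(\xi)\,d\sigma_n(\xi)=:K<\infty$. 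Because $\phi$ is strictly increasing with $\phi(0)=0$, we have $\phi>0$ on $(0,1]$, and for any $\delta\in(0,\min(t,1))$ a crude estimate gives $\Phi(\xi)\ge\tfrac{\delta}{2}\phi(\tfrac{\delta}{2})e^{-\delta\xi}$, so that $e^{-t\xi}/\Phi(\xi)\to0$ as $\xi\to\infty$. Hence $\sup_n\int_R^\infty e^{-t\xi}d\sigma_n\le K\sup_{\xi\ge R}\big(e^{-t\xi}/\Phi(\xi)\big)\to0$, which yields $p(t)<\infty$ by Fatou and $p_n(t)\to p(t)$.

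For $\underline{\lim}_n M_n(l_n)>0$ I would argue by contradiction: if a subsequence had $M_{n_k}(l_{n_k})\to0$, then Lemma \ref{l4} would force $\sigma_{n_k}\to0$, contradicting $\sigma_{n_k}\to\sigma$ with $\sigma$ non-trivial. Fixing $c$ with $0<c<\underline{\lim}_n M_n(l_n)$, for all large $n$ the function $M_n$ is continuous, strictly increasing and exceeds $c$ before $l_n$, so $a_n$ defined by $M_n(a_n)=c$ is well defined.

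The core step is to verify condition (B) for the translated strings $\tilde m_n(x)=m_n(x+a_n)\in\mathcal E^{(c)}$, which carry the same spectral measure $\sigma_n$ and satisfy $\tilde M_n(x)=M_n(x+a_n)$, $\tilde M_n(0)=c$. First, since $p_n(1)\to p(1)<\infty$ and $\phi$ grows only linearly on $[1,\infty)$, splitting at $t=1$ and using (\ref{28}) on $[0,1]$ gives $\sup_n\int_0^\infty p_n(t)\phi(t)e^{-t}\,dt<\infty$. As each $p_n$ is finite, Lemma \ref{l11}$(1)$ applies to $\tilde m_n$ with $\lambda=-1$; retaining only $x\le0$ on the right, where Lemma \ref{l1} gives $1\le\tilde\varphi_{-1}^{(n)}(x)\le\tilde\varphi_{-1}^{(n)}(0)\le e^{c}$, the monotonicity of $\phi$ produces the uniform bound $\int_{-\infty}^0\phi\big(e^{-2c}\tilde M_n(x)\big)\,dx\le C'$. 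Since $\tilde M_n$ is non-decreasing, if (B) failed there would be $\delta>0$ with $\sup_n\tilde M_n(x)\ge\delta$ for every $x$; choosing for each large $|x_0|$ an index $n_0$ with $\tilde M_{n_0}(x_0)\ge\delta/2$ would give $C'\ge|x_0|\,\phi(e^{-2c}\delta/2)\to\infty$, a contradiction. Thus (B) holds, and this translation of the purely spectral smallness (\ref{28}) into the geometric left-tail control (B) is the main obstacle of the whole argument.

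Finally I would extract a limit and identify it. Helly's selection theorem applied to $\hat{\tilde m}_n\in[0,1]$ yields a subsequence with $\tilde m_{n_k}\to m$ in the sense (A); together with (B) this is exactly the convergence hypothesis of Theorem \ref{t9}, so the spectral measure of $m$ equals $\lim_k\sigma_{n_k}=\sigma$. The bound (B) forces $M(-\infty)=0$, so $m$ satisfies the entrance condition (\ref{1}); passing to the limit in the normalization gives $M(0)=c$ (using continuity of $M$ on $[l_-,l)$ and $l\ge0$, which follows from $M(0)=c<\infty$), whence $m\in\mathcal E^{(c)}$ and $\sigma\in\mathcal S$. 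Since $\sigma$ determines a string of $\mathcal E$ uniquely up to translation by Theorem \ref{t1}, and the normalization $M(0)=c$ fixes the translate, the limit $m$ is independent of the chosen subsequence; therefore the entire sequence $m_n(\cdot+a_n)$ converges to $m$ in $\mathcal E$. Everything here beyond the tightness step is either soft (Helly, Theorem \ref{t9}, Theorem \ref{t1}) or a routine Laplace-transform tail estimate.
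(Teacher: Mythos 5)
Your proposal is correct and follows essentially the same route as the paper: the uniform exponential tail bound on $\sigma_{n}$ derived from (\ref{28}) to get $p_{n}\rightarrow p$, Lemma \ref{l4} for $\underline{\lim}_{n}M_{n}\left(  l_{n}\right)  >0$, and then Lemma \ref{l11}(1) combined with the bound $\varphi_{\lambda}^{\left(  n\right)  }\left(  x\right)  ^{-2}\geq e^{2\lambda M_{n}(x)}\geq e^{2\lambda c}$ on $(-\infty,0]$ to convert (\ref{28}) into the tightness condition (B), finishing with Helly, Theorem \ref{t9} and the uniqueness of the normalized string. The only cosmetic difference is that you fix $\lambda=-1$ where the paper keeps $\lambda<0$ general.
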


\begin{proof}
For any $\epsilon>0$ and any $N>0$%
\[
\int\nolimits_{0}^{\epsilon}p_{n}(t)\phi(t)dt=\int\nolimits_{0}^{\epsilon}%
\phi(t)dt\int\nolimits_{0}^{\infty}e^{-t\xi}d\sigma_{n}\left(  \xi\right)
\geq e^{\epsilon N}\int\nolimits_{N}^{\infty}e^{-2\epsilon\xi}d\sigma
_{n}\left(  \xi\right)  \int\nolimits_{0}^{\epsilon}\phi(t)dt
\]
holds, and the condition (\ref{28}) implies that there exists a constant
$C_{\epsilon}$ such that%
\[
\int\nolimits_{N}^{\infty}e^{-2\epsilon\xi}d\sigma_{n}\left(  \xi\right)  \leq
e^{-\epsilon N}\frac{\int\nolimits_{0}^{\epsilon}p_{n}(t)\phi(t)dt}%
{\int\nolimits_{0}^{\epsilon}\phi(t)dt}\leq e^{-\epsilon N}C_{\epsilon}%
\]
is valid for any $n,N,$ which yields%
\begin{equation}
p_{n}(t)\rightarrow p(t)=\int\nolimits_{0}^{\infty}e^{-t\xi}d\sigma\left(
\xi\right)  \label{30}%
\end{equation}
as $n\rightarrow\infty.$ Applying (\ref{26}) to $\phi\left(  t\right)
e^{\lambda t}$ for $\lambda<0$ shows%
\begin{align*}
\int\nolimits_{-\infty}^{l_{n}}E\left(  \phi\left(  X_{n}\left(  x\right)
\right)  e^{\lambda X_{n}\left(  x\right)  }\right)  dx &  =\int
\nolimits_{0}^{\infty}p_{n}(t)e^{\lambda t}\phi(t)dt\\
&  \leq\int\nolimits_{0}^{1}p_{n}(t)e^{\lambda t}\phi(t)dt+p_{n}%
(1)\int\nolimits_{1}^{\infty}e^{\lambda t}\phi(t)dt\leq C
\end{align*}
with a constant $C$, where $X_{n}\left(  x\right)  $ is defined by the
eigenvalues $\left\{  \mu_{j}\left(  x\right)  \right\}  _{j\geq1}$
corresponding to $m_{n}.$ Since we assume the limiting spectral measure
$\sigma$ is non-trivial, Lemma \ref{l4} shows%
\[
\underset{n\rightarrow\infty}{\underline{\lim}}M_{n}\left(  l_{n}\right)  >0.
\]
For $c$ such that%
\[
0<c<\underset{n\rightarrow\infty}{\underline{\lim}}M_{n}\left(  l_{n}\right)
\]
define $a_{n}$ by the solution $M_{n}\left(  a_{n}\right)  =c$ and set%
\[
\widetilde{m}_{n}\left(  x\right)  =m_{n}\left(  x+a_{n}\right)  .
\]
Then $\widetilde{m}_{n}\in\mathcal{E}^{\left(  c\right)  }$ and an inequality
(1) of Lemma \ref{l11} implies%
\[
\int\nolimits_{-\infty}^{l_{n}}E\left(  \phi\left(  X_{n}\left(  x\right)
\right)  e^{\lambda X_{n}\left(  x\right)  }\right)  dx\geq\int
\nolimits_{-\infty}^{\widetilde{l}_{n}}\widetilde{\varphi}_{\lambda}^{\left(
n\right)  }\left(  x\right)  ^{-2}\phi\left(  \widetilde{M}_{n}\left(
x\right)  \widetilde{\varphi}_{\lambda}^{\left(  n\right)  }\left(  x\right)
^{-2}\right)  dx,
\]
and from Lemma \ref{l1} we have%
\begin{align*}
\int\nolimits_{-\infty}^{\widetilde{l}_{n}}\widetilde{\varphi}_{\lambda
}^{\left(  n\right)  }\left(  x\right)  ^{-2}\phi\left(  \widetilde{M}%
_{n}\left(  x\right)  \widetilde{\varphi}_{\lambda}^{\left(  n\right)
}\left(  x\right)  ^{-2}\right)  dx &  \geq\int\nolimits_{-\infty}^{l_{n}%
}e^{2\lambda\widetilde{M}_{n}\left(  x\right)  }\phi\left(  \widetilde{M}%
_{n}\left(  x\right)  e^{2\lambda\widetilde{M}_{n}\left(  x\right)  }\right)
dx\\
&  \geq\int\nolimits_{-\infty}^{0}e^{2\lambda c}\phi\left(  \widetilde{M}%
_{n}\left(  x\right)  e^{2\lambda c}\right)  dx.
\end{align*}
Thus%
\[
\int\nolimits_{-\infty}^{0}e^{2\lambda c}\phi\left(  \widetilde{M}_{n}\left(
x\right)  e^{2\lambda c}\right)  dx\leq C
\]
is valid for any $n\geq1$. Therefore, for any $x<0$%
\[
e^{2\lambda c}\left(  -x\right)  \phi\left(  \widetilde{M}_{n}\left(
x\right)  e^{2\lambda c}\right)  \leq\int\nolimits_{x}^{0}e^{2\lambda c}%
\phi\left(  \widetilde{M}_{n}\left(  y\right)  e^{2\lambda c}\right)  dy\leq C
\]
which implies that $\left\{  \widetilde{m}_{n}\right\}  _{n\geq1}$ has a
convergent subsequence in the sense of the convergence in $\mathcal{E},$
namely the convergence under the conditions (A), (B). Since we have proved
(\ref{30}), the uniqueness of the spectral measure in $\mathcal{E}^{\left(
c\right)  }$ and Theorem \ref{t9} complete the proof.
\end{proof}

\begin{corollary}
\label{c1}Suppose a measure $\sigma$ on $[0,\infty)$ satisfies%
\[
\int\nolimits_{0}^{1}p(t)\phi(t)dt<\infty
\]
with a function $\phi$ on $\left[  0,1\right]  $ satisfying $(S.1)$. Then,
there exists an $m\in\mathcal{E}$ with spectral measure $\sigma$ in
$\mathcal{S}.$
\end{corollary}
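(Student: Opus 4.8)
The plan is to obtain the corollary as an immediate consequence of Theorem \ref{t8}, by exhibiting the given $\sigma$ as a limit of measures that are already known to lie in $\mathcal{S}$ and that inherit the integrability bound (\ref{28}) directly from the hypothesis. The natural device is the family of truncations $\sigma_{n}$, defined as the restriction of $\sigma$ to $[0,n]$; as distribution functions one has $\sigma_{n}(\xi)=\sigma(\xi)$ for $\xi\le n$ and $\sigma_{n}(\xi)=\sigma(n)$ for $\xi>n$, so that $\sigma_{n}(\xi)\to\sigma(\xi)$ at every $\xi$, in particular at every point of continuity of $\sigma$.

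First I would check that each $\sigma_{n}$ actually belongs to $\mathcal{S}$. Since $\int_{0}^{1}p(t)\phi(t)\,dt<\infty$ and $\phi$ is strictly increasing with $\phi(0)=0$ (so $\phi>0$ on $(0,1]$), the non-increasing function $p$ is finite on a set of full measure in $(0,1]$, and in particular $p(1)<\infty$. Hence $\sigma([0,n])\le e^{n}p(1)<\infty$, so each $\sigma_{n}$ is a finite, compactly supported measure. A bounded compactly supported measure has (trivially) polynomial growth at $\infty$, and by the remark in Section~2 every such measure is the spectral measure of some string $m_{n}\in\mathcal{E}$; thus $\sigma_{n}\in\mathcal{S}$. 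Writing $p_{n}(t)=\int_{0}^{n}e^{-t\xi}\,d\sigma(\xi)$, one has $p_{n}(t)\le p(t)$ for all $t>0$, while $p_{n}(t)<\infty$ for every $t>0$ because $\sigma_{n}$ is finite.

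With these facts in hand I would verify the hypotheses of Theorem \ref{t8} for the sequence $\{m_{n}\}$. The pointwise bound $p_{n}\le p$ gives $\sup_{n}\int_{0}^{1}p_{n}(t)\phi(t)\,dt\le\int_{0}^{1}p(t)\phi(t)\,dt<\infty$, which is precisely (\ref{28}), and $\sigma_{n}(\xi)\to\sigma(\xi)$ at every point of continuity of $\sigma$. If $\sigma$ is non-trivial, Theorem \ref{t8} then produces, after the normalising shifts $m_{n}(\cdot+a_{n})$, a string $m\in\mathcal{E}^{(c)}\subset\mathcal{E}$ whose spectral measure is $\sigma$, which is the assertion. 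The remaining case $\sigma\equiv0$ is disposed of at once: by the conventions recorded in Section~2 the zero measure is the spectral function of any string of the form (\ref{13}), so such an $m\in\mathcal{E}$ exists trivially.

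The only step demanding genuine care is the claim that the truncations belong to $\mathcal{S}$: since no usable characterisation of $\mathcal{S}$ is available, one must appeal precisely to the membership of polynomial-growth measures in $\mathcal{S}$ asserted in Section~2, rather than trying to construct the $m_{n}$ by hand. Everything else is inherited mechanically from $p_{n}\le p$ together with the finiteness of $\sigma_{n}$, so the real content of the corollary is carried entirely by Theorem \ref{t8}; the truncation is merely the approximation that feeds $\sigma$ into that theorem.
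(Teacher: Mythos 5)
Your proposal is correct and follows exactly the route of the paper's own proof: truncate $\sigma$ at level $n$, observe that the truncations lie in $\mathcal{S}$ and satisfy the hypotheses of Theorem \ref{t8} (with the uniform bound (\ref{28}) inherited from $p_{n}\leq p$), and apply that theorem. The paper states this in two sentences; your version merely fills in the routine verifications, including the trivial case $\sigma\equiv0$, all of which are sound.
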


\begin{proof}
Define $\sigma_{n}$ by%
\[
\sigma_{n}\left(  \xi\right)  =\left\{
\begin{array}
[c]{ccc}%
\sigma\left(  \xi\right)  & \text{for} & \xi<n\\
\sigma\left(  n\right)  & \text{for} & \xi\geq n
\end{array}
.\right.
\]
Then this $\sigma_{n}$ satisfies all the conditions of Theorem \ref{t8}.
Applying the theorem we easily obtain the corollary.
\end{proof}

This corollary provides a plenty of spectral measures in $\mathcal{S}$ growing
faster than any power order at $\infty$.

\section{Continuity of the correspondence between $\mathcal{E}_{\phi}$ and
$\mathcal{S}_{\phi}$}

In this section we give a necessary and sufficient condition for the
continuity of the correspondence by restricting the order of growth of
spectral measures at $\infty.$ We call $\phi$ to be a scale function if it
satisfies the conditions (S.1), (S.2), (S.3). For a scale function $\phi$ set%
\[
\mathcal{E}_{\phi}=\left\{  m\in\mathcal{E};\text{ }%
{\displaystyle\int\nolimits_{-\infty}^{a}}
\phi\left(  M\left(  x\right)  \right)  dx<\infty\text{ \ for }\exists
a\in\left(  l_{-},l_{+}\right)  \right\}  ,
\]
and%
\[
\mathcal{S}_{\phi}=\left\{  \sigma;\text{ }%
{\displaystyle\int\nolimits_{1}^{\infty}}
\widetilde{\phi}\left(  \xi\right)  \sigma\left(  d\xi\right)  <\infty
\right\}  ,
\]
where%
\[
\widetilde{\phi}\left(  \xi\right)  =%
{\displaystyle\int\nolimits_{0}^{\infty}}
e^{-t\xi}\phi\left(  t\right)  dt.
\]
It is easy to see that%
\[%
{\displaystyle\int\nolimits_{1}^{\infty}}
\widetilde{\phi}\left(  \xi\right)  \sigma\left(  d\xi\right)  <\infty
\Longleftrightarrow%
{\displaystyle\int\nolimits_{0}^{1}}
p(t)\phi\left(  t\right)  dt<\infty.
\]
Moreover, from the properties of scales$,$ it is always valid that for
$\sigma\in\mathcal{S}_{\phi}$%
\[%
{\displaystyle\int\nolimits_{1}^{\infty}}
\xi^{-\alpha-1}\sigma\left(  d\xi\right)  <\infty
\]
for an $\alpha\geq1.$ On the other hand, if $m\in\mathcal{E}_{\phi}$, (2) of
Lemma \ref{l11} yields%
\[%
\begin{array}
[c]{l}%
{\displaystyle\int\nolimits_{0}^{\infty}}
p(t)\phi\left(  t\right)  e^{\lambda t}dt\leq C_{\phi}%
{\displaystyle\int\nolimits_{-\infty}^{a}}
\phi\left(  M(x)\right)  \varphi_{\lambda}\left(  x\right)  ^{-2}dx+C_{\phi
}\dfrac{-\lambda}{\varphi_{\lambda}^{\prime}\left(  a\right)  }%
{\displaystyle\int\nolimits_{0}^{\infty}}
\phi\left(  t\right)  e^{\lambda t}dt\\
\text{ \ \ \ \ \ \ \ \ \ \ \ \ \ \ \ \ \ \ \ }\leq C_{\phi}%
{\displaystyle\int\nolimits_{-\infty}^{a}}
\phi\left(  M(x)\right)  dx+C_{\phi}\dfrac{-\lambda}{\varphi_{\lambda}%
^{\prime}\left(  a\right)  }%
{\displaystyle\int\nolimits_{0}^{\infty}}
\phi\left(  t\right)  e^{\lambda t}dt.
\end{array}
\]
Therefore it holds that%
\[%
{\displaystyle\int\nolimits_{0}^{\infty}}
p(t)\phi\left(  t\right)  e^{\lambda t}dt<\infty,
\]
which shows $\sigma\in\mathcal{S}_{\phi}.$ Conversely assume $\sigma
\in\mathcal{S}_{\phi}.$ Then%
\begin{align*}%
{\displaystyle\int\nolimits_{0}^{\infty}}
p(t)\phi\left(  t\right)  e^{\lambda t}dt &  =%
{\displaystyle\int\nolimits_{0}^{1}}
p(t)\phi\left(  t\right)  e^{\lambda t}dt+%
{\displaystyle\int\nolimits_{1}^{\infty}}
p(t)\phi\left(  t\right)  e^{\lambda t}dt\\
&  \leq%
{\displaystyle\int\nolimits_{0}^{1}}
p(t)\phi\left(  t\right)  dt+p(1)%
{\displaystyle\int\nolimits_{1}^{\infty}}
\phi\left(  t\right)  e^{\lambda t}dt<\infty
\end{align*}
for any $\lambda<0.$ Therefore%
\[%
{\displaystyle\int\nolimits_{-\infty}^{l}}
\phi\left(  M(x)\varphi_{\lambda}\left(  x\right)  ^{-2}\right)
\varphi_{\lambda}\left(  x\right)  ^{-2}dx<\infty
\]
holds. Since $\phi$ satisfies the condition (S.3)%
\[
\phi\left(  M(x)\varphi_{\lambda}\left(  x\right)  ^{-2}\right)  \geq
C_{-}\left(  \varphi_{\lambda}\left(  x\right)  ^{-2}\right)  \phi\left(
M(x)\right)
\]
holds. Due to%
\[
\varphi_{\lambda}\left(  x\right)  ^{-2}\geq e^{2\lambda M(x)}%
\]
and $M(x)\rightarrow0$ as $x\rightarrow-\infty,$ we easily see%
\[%
{\displaystyle\int\nolimits_{-\infty}^{a}}
\phi\left(  M(x)\right)  dx<\infty,
\]
which implies $m\in\mathcal{E}_{\phi}$. Therefore, a string belongs to
$\mathcal{E}_{\phi}$ if and only if its spectral measure is an element of
$\mathcal{S}_{\phi}.$

For $\phi$ let $m_{n},m$\ be strings of $\mathcal{E}_{\phi}$ and define the
convergence of $m_{n}$ to $m$ in $\mathcal{E}_{\phi}$ by\medskip\newline(C)
\ $\lim\limits_{x\rightarrow-\infty}\sup\limits_{n\geq1}%
{\displaystyle\int\nolimits_{-\infty}^{x}}
\phi\left(  M_{n}(y)\right)  dy=0,\medskip$\newline in addition to the
condition (A). The convergence of spectral measures in $\mathcal{S}_{\phi}$ is
defined by\medskip\newline$(A^{\prime})$ \ $\sigma_{n}\left(  \xi\right)
\rightarrow\sigma\left(  \xi\right)  $ at every point of continuity of
$\sigma.\smallskip$\newline$(C^{\prime})$ \ $\underset{N\rightarrow\infty
}{\lim}\underset{n\geq1}{\text{ }\sup}%
{\displaystyle\int\nolimits_{N}^{\infty}}
\widetilde{\phi}\left(  \xi\right)  \sigma_{n}\left(  d\xi\right)
=0.\medskip$\newline An equivalent statement is possible by $p(t).$%
\medskip\newline$(A^{\prime\prime})$ \ $p_{n}\left(  t\right)  \rightarrow
p\left(  t\right)  $ for any $t>0.\smallskip$\newline$(C^{\prime\prime})$
\ $\underset{\epsilon\downarrow0}{\lim}\underset{n\geq1}{\text{ }\sup}%
{\displaystyle\int\nolimits_{0}^{\epsilon}}
p_{n}\left(  t\right)  \phi\left(  t\right)  dt=0.\medskip$\newline

Set%
\[
\mathcal{E}_{\phi}^{\left(  c\right)  }=\mathcal{E}_{\phi}\cap\mathcal{E}%
^{\left(  c\right)  }.
\]

\begin{theorem}
\label{t3}Let $\left\{  \sigma_{n}\right\}  _{n\geq1},\sigma$ be elements of
$\mathcal{S}_{\phi}$ and $m_{n},m\in\mathcal{E}_{\phi}^{\left(  c\right)  }$
be the strings corresponding to $\sigma_{n},$ $\sigma$ respectively. Then,
$m_{n}\rightarrow m$ in $\mathcal{E}_{\phi}^{\left(  c\right)  }$ if and only
if \ $\sigma_{n}\rightarrow\sigma$ in $\mathcal{S}_{\phi}.$
\end{theorem}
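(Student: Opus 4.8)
The plan is to prove the equivalence by establishing each direction separately, using the already-proven machinery as the backbone. The forward direction ($m_n\to m$ in $\mathcal{E}_\phi^{(c)}$ implies $\sigma_n\to\sigma$ in $\mathcal{S}_\phi$) should be the easier one: condition (A) is part of the definition of convergence in $\mathcal{E}_\phi^{(c)}$, and since all strings share the normalization $M_n(0)=c$, the tightness at $-\infty$ built into (C) immediately gives condition (B) (because $\phi$ is strictly increasing with $\phi(0)=0$, control of $\int\phi(M_n)$ near $-\infty$ controls $M_n$ itself there). Hence Theorem \ref{t9} applies and yields $\sigma_n\to\sigma$ at continuity points, which is $(A')$. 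It remains to produce the uniform tail bound $(C')$, equivalently $(C'')$. For this I would invoke inequality (1) of Lemma \ref{l11}, which bounds $\int_0^\infty p_n(t)\phi(t)e^{\lambda t}\,dt$ from below by $\int_{-\infty}^{l_n}\phi\!\left(M_n(x)\varphi_\lambda^{(n)}(x)^{-2}\right)\varphi_\lambda^{(n)}(x)^{-2}\,dx$, and run the argument in the opposite direction from Theorem \ref{t8}: the uniform smallness of $\int_{-\infty}^x\phi(M_n)$ transfers, via Lemma \ref{l1} and the normalization, to uniform smallness of $\int_0^\epsilon p_n(t)\phi(t)\,dt$.

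For the converse direction ($\sigma_n\to\sigma$ in $\mathcal{S}_\phi$ implies $m_n\to m$ in $\mathcal{E}_\phi^{(c)}$) I would lean heavily on Theorem \ref{t8}. The hypotheses $(A')$ and $(C'')$ are exactly $(28)$ together with convergence of $\sigma_n$ to a nontrivial $\sigma$, so Theorem \ref{t8} already gives a subsequence along which $m_n(\cdot+a_n)$ converges in $\mathcal{E}$ to the unique $m\in\mathcal{E}^{(c)}$ with spectral measure $\sigma$; since the $m_n$ are assumed already normalized in $\mathcal{E}_\phi^{(c)}$, the recentering shift $a_n$ must converge to $0$, so in fact $m_n\to m$ in $\mathcal{E}$, i.e.\ conditions (A) and (B) hold. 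The work that remains is to upgrade this $\mathcal{E}$-convergence to convergence in $\mathcal{E}_\phi$, which means verifying the stronger tightness condition (C), and I expect this to be the main obstacle. The plan here is to bound $\int_{-\infty}^x\phi(M_n(y))\,dy$ uniformly in $n$ by estimating it against $\int_0^\epsilon p_n(t)\phi(t)\,dt$ using inequality (1) of Lemma \ref{l11} once more: fixing $\lambda<0$, the lower bound in that lemma together with $\varphi_\lambda^{(n)}(x)^{-2}\ge e^{2\lambda M_n(x)}$ from Lemma \ref{l1} and the fact that $M_n\to0$ as $x\to-\infty$ shows that the left tail of $\int\phi(M_n)$ is dominated by the left tail of $\int p_n(t)\phi(t)e^{\lambda t}\,dt$, and the latter is controlled uniformly by $(C'')$.

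The single delicate point, which I flag as the hardest step, is the interchange between the small-$t$ (i.e.\ small-$\epsilon$) behavior of $p_n$ and the far-left behavior of $M_n$: the condition (S.3) and the constant $C_-$ enter precisely here, mirroring the computation already used in Section 5 to show $m\in\mathcal{E}_\phi$ from $\sigma\in\mathcal{S}_\phi$. Concretely, one uses $\phi(M_n(x)\varphi_\lambda^{(n)}(x)^{-2})\ge C_-(\varphi_\lambda^{(n)}(x)^{-2})\phi(M_n(x))$ and the fact that $\varphi_\lambda^{(n)}(x)^{-2}\to1$ as $x\to-\infty$ uniformly in $n$ (again by Lemma \ref{l1}, since $M_n(x)\to0$ there uniformly under the tightness we are propagating), so that $C_-$ of that argument is bounded below by a positive constant on the relevant region. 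Once this uniform comparison is in place, $(C'')$ forces the uniform left-tail smallness (C), and the equivalence is complete. I would organize the write-up as two blocks, ``Necessity'' and ``Sufficiency,'' invoking Theorems \ref{t9} and \ref{t8} as black boxes and reserving the genuine new estimate for the tightness transfer via Lemma \ref{l11}(1).
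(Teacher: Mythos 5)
Your converse direction (from $\sigma_n\rightarrow\sigma$ to $m_n\rightarrow m$) is essentially the paper's argument: Theorem \ref{t8} gives (A), and the transfer of $(C^{\prime\prime})$ to (C) goes exactly through Lemma \ref{l11}(1), the bound $\phi\left(M_n(x)\varphi_\lambda^{(n)}(x)^{-2}\right)\geq C_-\left(\varphi_\lambda^{(n)}(x)^{-2}\right)\phi\left(M_n(x)\right)$ from (S.3), and the uniform convergence $\varphi_\lambda^{(n)}(x)\rightarrow1$ as $x\rightarrow-\infty$. You correctly identified $C_-$ as the crux there. (One small remark: since the $m_n$ are already normalized by $M_n(0)=c$, no recentering is needed at all; the shifts are identically zero rather than merely tending to zero.)

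The forward direction, however, has a genuine gap: you propose to establish $(C^{\prime\prime})$ by invoking inequality (1) of Lemma \ref{l11}, which bounds $\int_0^\infty p_n(t)\phi(t)e^{\lambda t}\,dt$ \emph{from below} by a string quantity. A lower bound on the spectral side cannot yield the uniform smallness of $\int_0^\epsilon p_n(t)\phi(t)\,dt$ that $(C^{\prime\prime})$ demands; you need the \emph{upper} bound, which is inequality (2) of Lemma \ref{l11} (this is where (S.2) and the constant $C_\phi$ enter). Moreover, once you use (2) there are two further terms you must control and which your sketch does not address: the middle integral $\int_a^0\phi\left(M_n(x)\right)\varphi_\lambda^{(n)}(x)^{-2}\,dx$, which the paper makes small uniformly in $n$ by taking $-\lambda$ large and using the uniform convergence of $M_n$ and $\varphi_\lambda^{(n)}$ on $(-\infty,0]$ together with $\varphi_\lambda^{(n)}(0)\geq1-\lambda c$; and the boundary term $C_\phi\frac{-\lambda}{\varphi_\lambda^{(n)\prime}(0)}\int_0^\infty\phi(t)e^{\lambda t}\,dt$, which requires the lower bound $\underline{\lim}_{n}\,m_n(0)\geq m(0-)>0$ (a consequence of $c>0$ and condition (A)). Only after all three pieces are handled does one get $\sup_n\int_0^\infty p_n(t)\phi(t)e^{\lambda t}\,dt\leq3\epsilon$ and hence $(C^{\prime\prime})$ via $\int_0^{-1/\lambda}p_n\phi\,dt\leq e\int_0^\infty p_n\phi e^{\lambda t}\,dt$. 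So the architecture of your proof is right, but the necessity half as written rests on the wrong inequality and omits the two estimates that make the correct one usable.
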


\begin{proof}
Suppose $m_{n}\rightarrow m$ in $\mathcal{E}_{\phi}^{\left(  c\right)  }.$
Then, Theorem \ref{t9} shows the validity of the condition $(A^{\prime}).$
Therefore we have only to check the condition $(C^{\prime\prime}).$ From (2)
of Lemma \ref{l11}%
\[%
{\displaystyle\int\nolimits_{0}^{\infty}}
p_{n}(t)\phi\left(  t\right)  e^{\lambda t}dt\leq C_{\phi}%
{\displaystyle\int\nolimits_{-\infty}^{0}}
\phi\left(  M_{n}(x)\right)  \varphi_{\lambda}^{\left(  n\right)  }\left(
x\right)  ^{-2}dx+C_{\phi}\dfrac{-\lambda}{\varphi_{\lambda}^{\left(
n\right)  \prime}\left(  0\right)  }%
{\displaystyle\int\nolimits_{0}^{\infty}}
\phi\left(  t\right)  e^{\lambda t}dt
\]
is valid$.$ Fix $\epsilon>0$ and choose $a<0$ such that%
\[
C_{\phi}%
{\displaystyle\int\nolimits_{-\infty}^{a}}
\phi\left(  M_{n}(x)\right)  dx<\epsilon
\]
for any $n\geq1.$ Since $M_{n}(x),\varphi_{\lambda}^{\left(  n\right)
}\left(  x\right)  $ converge to $M(x),\varphi_{\lambda}\left(  x\right)  $
uniformly on $(-\infty,0]$ and the estimate%
\[
\varphi_{\lambda}^{\left(  n\right)  }\left(  0\right)  \geq1-\lambda
M_{n}(0)=1-\lambda c
\]
show that if $-\lambda$ is sufficiently large, then%
\[
C_{\phi}%
{\displaystyle\int\nolimits_{a}^{0}}
\phi\left(  M_{n}(x)\right)  \varphi_{\lambda}^{\left(  n\right)  }\left(
x\right)  ^{-2}dx<\epsilon
\]
is valid for any $n\geq1.$ Moreover, due to $c>0$%
\[
\underset{n\rightarrow\infty}{\underline{\lim}}m_{n}\left(  0\right)  \geq
m\left(  0-\right)  >0
\]
holds, hence%
\[
C_{\phi}\dfrac{-\lambda}{\varphi_{\lambda}^{\left(  n\right)  \prime}\left(
0\right)  }%
{\displaystyle\int\nolimits_{0}^{\infty}}
\phi\left(  t\right)  e^{\lambda t}dt\leq C_{\phi}\dfrac{1}{m_{n}\left(
0\right)  }%
{\displaystyle\int\nolimits_{0}^{\infty}}
\phi\left(  t\right)  e^{\lambda t}dt<\epsilon
\]
also holds for any $n\geq1$ if we choose sufficiently large $-\lambda,$ which
implies%
\[
\sup_{n\geq1}%
{\displaystyle\int\nolimits_{0}^{\infty}}
p_{n}(t)\phi\left(  t\right)  e^{\lambda t}dt\leq3\epsilon.
\]
From%
\[%
{\displaystyle\int\nolimits_{0}^{-1/\lambda}}
p_{n}(t)\phi\left(  t\right)  dt\leq e%
{\displaystyle\int\nolimits_{0}^{\infty}}
p_{n}(t)\phi\left(  t\right)  e^{\lambda t}dt\leq3e\epsilon
\]
the condition $(C^{\prime\prime})$ is confirmed. Conversely assume $\sigma
_{n}\rightarrow\sigma$ in $\mathcal{S}_{\phi}.$ Then Theorem \ref{t8} shows
the condition (A) holds. Hence we have only to check the condition $(C)$. From
(1) of Lemma \ref{l11}%
\[%
{\displaystyle\int\nolimits_{0}^{\infty}}
p_{n}(t)\phi\left(  t\right)  e^{\lambda t}dt\geq%
{\displaystyle\int\nolimits_{-\infty}^{l}}
\phi\left(  M_{n}(x)\varphi_{\lambda}^{\left(  n\right)  }\left(  x\right)
^{-2}\right)  \varphi_{\lambda}^{\left(  n\right)  }\left(  x\right)  ^{-2}dx
\]
follows. The property (S.3) implies%
\[
\phi\left(  M_{n}(x)\varphi_{\lambda}^{\left(  n\right)  }\left(  x\right)
^{-2}\right)  \geq C_{-}\left(  \varphi_{\lambda}^{\left(  n\right)  }\left(
x\right)  ^{-2}\right)  \phi\left(  M_{n}(x)\right)
\]
and, as was pointed out in the proof of Theorem \ref{t8}, $\varphi_{\lambda
}^{\left(  n\right)  }\left(  x\right)  \rightarrow1$ as $x\rightarrow-\infty$
uniformly with respect to $n.$ Therefore, the condition $(C^{\prime\prime})$
guarantees the condition $(C)$.
\end{proof}

The last theorem can be restated as the convergence in $\mathcal{E}_{\phi}.$

\begin{theorem}
\label{t5}Let $\left\{  \sigma_{n}\right\}  _{n\geq1},\sigma$ be elements of
$\mathcal{S}_{\phi}$ and $m_{n},m\in\mathcal{E}_{\phi}$ be the strings
corresponding to $\sigma_{n},$ $\sigma$ respectively. Assume $\sigma
_{n}\rightarrow\sigma$ in $\mathcal{S}_{\phi}$ and $\sigma$ is non-trivial$.$
Then, there exist a sequence $\left\{  a_{n}\right\}  _{n\geq1}$ in
$\boldsymbol{R}$ and $c>0$ with $m_{n}\left(  \cdot+a_{n}\right)
\in\mathcal{E}_{\phi}^{\left(  c\right)  },$ and
\[
m_{n}\left(  \cdot+a_{n}\right)  \rightarrow m
\]
holds in $\mathcal{E}_{\phi}.$
\end{theorem}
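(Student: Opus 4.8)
The plan is to deduce Theorem~\ref{t5} from the normalized statement already proved in Theorem~\ref{t3}, the only additional work being to manufacture, by translations, representatives that all lie in one class $\mathcal{E}_\phi^{(c)}$. First I would unwind the hypothesis $\sigma_n\to\sigma$ in $\mathcal{S}_\phi$, which by definition is $(A')$ and $(C')$, equivalently $(A'')$ and $(C'')$. The tightness condition $(C'')$ together with $p_n(t)\to p(t)$ produces the uniform bound $\sup_{n\geq1}\int_0^1 p_n(t)\phi(t)\,dt<\infty$: the portion $\int_0^\epsilon$ is uniformly small by $(C'')$, while on $[\epsilon,1]$ the monotonicity of $p_n$ and the convergence $p_n(\epsilon)\to p(\epsilon)<\infty$ bound the rest. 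Hence the hypothesis (\ref{28}) of Theorem~\ref{t8} is in force, and as $\sigma$ is assumed non-trivial all of that theorem's hypotheses hold.

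Next I would invoke Theorem~\ref{t8}, which yields $\underline{\lim}_{n\to\infty}M_n(l_n)>0$. I then fix $c>0$ below this liminf and below $M(l)$ for the limiting string, and, after a harmless translation, normalize $m$ so that $M(0)=c$, i.e. $m\in\mathcal{E}_\phi^{(c)}$. For all large $n$ one has $M_n(l_n)>c$, so by continuity and strict monotonicity of $M_n$ there is a unique $a_n$ with $M_n(a_n)=c$; put $\tilde m_n(x)=m_n(x+a_n)$. By construction $\tilde M_n(0)=c$, so $\tilde m_n\in\mathcal{E}^{(c)}$. Since a translation leaves the spectral measure unchanged (the identity $\sigma_a=\sigma$ of the Preliminaries), $\tilde m_n$ still has spectral measure $\sigma_n\in\mathcal{S}_\phi$; by the equivalence $m\in\mathcal{E}_\phi\Leftrightarrow\sigma\in\mathcal{S}_\phi$ established at the beginning of this section, $\tilde m_n\in\mathcal{E}_\phi$, whence $\tilde m_n\in\mathcal{E}_\phi^{(c)}$.

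Now $\{\tilde m_n\}_{n\geq1}$ and $m$ all lie in $\mathcal{E}_\phi^{(c)}$, with spectral measures $\sigma_n$ and $\sigma$, and $\sigma_n\to\sigma$ in $\mathcal{S}_\phi$ by hypothesis. Theorem~\ref{t3} applies verbatim and gives $\tilde m_n\to m$ in $\mathcal{E}_\phi^{(c)}$; as convergence in $\mathcal{E}_\phi^{(c)}$ is exactly convergence in $\mathcal{E}_\phi$ (the conditions (A) and (C) coincide, the normalization merely pinning down the representative), this is the asserted $m_n(\cdot+a_n)\to m$. The main obstacle is the consistency of the normalization: one must select a single $c$ admissible both for the entire tail of the sequence and for the limit, check that each $a_n$ is well defined, and verify that the shifted strings genuinely land in $\mathcal{E}_\phi^{(c)}$. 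Once this bookkeeping is in place the reduction to Theorem~\ref{t3} is immediate, since the substantive analysis—the two-sided estimates of Lemma~\ref{l11} and the liminf lower bound from Theorem~\ref{t8}—has already been performed.
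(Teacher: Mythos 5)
Your proposal is correct and takes essentially the same route as the paper, whose own proof of Theorem~\ref{t5} is just the two-line observation that non-triviality of $\sigma$ allows one to invoke Lemma~\ref{l4} (equivalently, the liminf bound $\underline{\lim}_{n}M_{n}(l_{n})>0$ recorded in Theorem~\ref{t8}) to produce the normalizing shifts $a_{n}$, after which Theorem~\ref{t3} finishes. You merely write out more of the bookkeeping than the paper does --- deriving the uniform bound (\ref{28}) from $(A^{\prime\prime})$ and $(C^{\prime\prime})$, choosing an admissible $c$, and checking that the shifted strings land in $\mathcal{E}_{\phi}^{\left(  c\right)  }$ --- and this is all sound.
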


\begin{proof}
Since $\sigma$ is non-trivial, we can apply Lemma \ref{l4}. The rest of the
proof is clear from Theorem \ref{t3}.
\end{proof}

For applications it will be helpful to rewrite the condition $(C)$ as
Kasahara-Watanabe did in \cite{k-w3}.

\begin{lemma}
\label{l5}Assume $\phi$ satisfies $(S.1)$. Then a sequence $\left\{
m_{n}\right\}  _{n\geq1}$ converges to $m$ in $\mathcal{E}_{\phi},$ if and
only if $\left\{  m_{n}\right\}  _{n\geq1}$ and $m$ satisfy the condition
below.\smallskip\newline$(D)$ For any $x\in\boldsymbol{R}$%
\[%
{\displaystyle\int\nolimits_{-\infty}^{x}}
\phi\left(  M_{n}(y)\right)  dy\rightarrow%
{\displaystyle\int\nolimits_{-\infty}^{x}}
\phi\left(  M(y)\right)  dy.
\]
Similarly the set of conditions $\left(  A^{\prime}\right)  $ and $(C^{\prime
})$ is equivalent to $(D^{\prime})$, and that of $(A^{\prime\prime})$ and
$(C^{\prime\prime})$ is equivalent to $(D^{\prime\prime})$.\smallskip
\newline$(D^{\prime})$ For any $\lambda<0$%
\[%
{\displaystyle\int\nolimits_{0}^{\infty}}
\widetilde{\phi}\left(  \xi-\lambda\right)  \sigma_{n}\left(  d\xi\right)
\rightarrow%
{\displaystyle\int\nolimits_{0}^{\infty}}
\widetilde{\phi}\left(  \xi-\lambda\right)  \sigma\left(  d\xi\right)  .
\]
$(D^{\prime\prime})$ For any $\lambda<0$%
\[%
{\displaystyle\int\nolimits_{0}^{\infty}}
p_{n}\left(  t\right)  \phi\left(  t\right)  e^{t\lambda}dt\rightarrow%
{\displaystyle\int\nolimits_{0}^{\infty}}
p\left(  t\right)  \phi\left(  t\right)  e^{t\lambda}dt.
\]

\end{lemma}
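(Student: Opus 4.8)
The plan is to reduce the three asserted equivalences to two genuinely different ones and then treat those by parallel ``monotone function'' arguments. First, the identity $\widetilde{\phi}(\xi-\lambda)=\int_{0}^{\infty}e^{-t\xi}e^{t\lambda}\phi(t)\,dt$ together with Tonelli's theorem gives $\int_{0}^{\infty}\widetilde{\phi}(\xi-\lambda)\sigma_{n}(d\xi)=\int_{0}^{\infty}p_{n}(t)\phi(t)e^{t\lambda}\,dt$, so $(D')$ and $(D'')$ are literally the same condition. Since the text already records that $(A')\wedge(C')$ and $(A'')\wedge(C'')$ are equivalent reformulations, it suffices to prove $[(A)\wedge(C)]\Leftrightarrow(D)$ and $[(A'')\wedge(C'')]\Leftrightarrow(D'')$. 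Both have the same shape: on one side, pointwise convergence of a monotone profile ($m_{n}$, resp.\ $p_{n}$) together with a tightness condition at one endpoint; on the other side, convergence of a one-sided integral cutting off the opposite endpoint (by truncation at $x$ in $(D)$, by the weight $e^{t\lambda}$ in $(D'')$). I would therefore prove each by a ``middle $+$ tail'' splitting in one direction and a ``monotonicity $+$ derivative/continuity'' argument in the other.

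For $[(A)\wedge(C)]\Leftrightarrow(D)$, write $G_{n}(x)=\int_{-\infty}^{x}\phi(M_{n}(y))\,dy$. In the forward direction, monotonicity of $M_{n}$ gives $\phi(M_{n}(x-1))\le\int_{x-1}^{x}\phi(M_{n}(y))\,dy\le G_{n}(x)$, so $(C)$ forces $\sup_{n}M_{n}(x)\to0$ as $x\to-\infty$; combined with $(A)$ (which bounds $m_{n}$ on compacta via $m_{n}(y)\le m_{n}(x')\to m(x')$ at a continuity point $x'$) this yields $M_{n}(y)\to M(y)$ for every $y<l$, and then $G_{n}(x)\to G(x)$ by splitting at $-K$ (bounded convergence on $[-K,x]$, uniform tail bound $(C)$ on $(-\infty,-K]$). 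Conversely, $(D)$ says the convex functions $G_{n}$ converge pointwise to the continuous convex $G$ with $G(-\infty)=0$; then $(C)$ drops out from monotonicity of $G_{n}$ (pick $x_{0}$ with $G(x_{0})<\varepsilon/2$, use $G_{n}(x)\le G_{n}(x_{0})\to G(x_{0})$ for $x\le x_{0}$, absorb finitely many early $n$). For $(A)$ I would use that pointwise limits of convex functions converge in derivative at points of differentiability of the limit: since $G'=\phi(M(\cdot))$ and $M$ are continuous, $\phi(M_{n}(x))\to\phi(M(x))$ for all $x<l$, hence $M_{n}\to M$ pointwise; applying the same principle once more to $M_{n}\to M$ gives $m_{n}(x)\to m(x)$ at every continuity point of $m$, which is $(A)$.

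For $[(A'')\wedge(C'')]\Leftrightarrow(D'')$ the forward implication is the same splitting: fix $\lambda<0$, bound $\int_{0}^{\varepsilon}p_{n}\phi e^{\lambda t}\,dt\le\int_{0}^{\varepsilon}p_{n}\phi\,dt$ uniformly by $(C'')$, use bounded convergence on $[\varepsilon,T]$ (where the decreasing $p_{n}$ satisfy $p_{n}(t)\le p_{n}(\varepsilon)\to p(\varepsilon)$), and use the decay of $e^{\lambda t}$ against the linear growth of $\phi$ on $[T,\infty)$. For the converse, fix a reference $\lambda_{0}<0$ and set $\nu_{n}(dt)=p_{n}(t)\phi(t)e^{\lambda_{0}t}\,dt$; then $(D'')$ says $\int_{0}^{\infty}e^{-st}\nu_{n}(dt)\to\int_{0}^{\infty}e^{-st}\nu(dt)$ for all $s>0$, so by the continuity theorem for Laplace transforms $\nu_{n}\to\nu$ vaguely on $(0,\infty)$, whence $p_{n}(t)\,dt\to p(t)\,dt$ vaguely after dividing by the continuous positive factor $\phi(t)e^{\lambda_{0}t}$. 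Since each $p_{n}$ is completely monotone, hence decreasing, vague convergence upgrades to pointwise convergence at every continuity point of the continuous limit $p$, giving $(A'')$. Finally $(C'')$ follows from Scheff\'e's lemma: $(A'')$ gives pointwise convergence of $p_{n}\phi e^{\lambda t}$, $(D'')$ gives convergence of its integral to the finite value $\int_{0}^{\infty}p\phi e^{\lambda t}\,dt$ (finite since $\sigma\in\mathcal{S}_{\phi}$), so $p_{n}\phi e^{\lambda t}\to p\phi e^{\lambda t}$ in $L^{1}(0,\infty)$; the resulting uniform integrability kills $\sup_{n}\int_{0}^{\varepsilon}p_{n}\phi e^{\lambda t}\,dt$ as $\varepsilon\to0$, and multiplying by the bounded factor $e^{-\lambda\varepsilon}$ yields $(C'')$.

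The routine content is the forward splitting arguments and the tail bound for $(C)$; the real work is in the backward directions, where one must recover pointwise convergence of the monotone profiles from mere integral convergence. The main obstacle will be exactly this upgrade: in the first equivalence it rests on the convex-function derivative-convergence principle applied twice (using that $M$ is continuous, so the exceptional set is empty), while in the third it rests on the Laplace continuity theorem together with complete monotonicity of $p_{n}$ to turn vague convergence into pointwise convergence, after which Scheff\'e's lemma delivers the tightness condition $(C'')$.
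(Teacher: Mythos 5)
Your proposal is correct. For the equivalence $[(A)\wedge(C)]\Leftrightarrow(D)$ it follows essentially the paper's own route: the paper likewise extracts the uniform bound $\phi\left(M_{n}(x)\right)(c-x)\leq\int_{-\infty}^{c}\phi\left(M_{n}(y)\right)dy$ from monotonicity to get pointwise convergence of $M_{n}$, and disposes of the converse with the remark that $(A)$ follows ``by the monotonicity of $\phi$ and $M_{n}$'' --- your explicit appeal to the derivative-convergence principle for convex functions (applied to $G_{n}$ and then to $M_{n}$) is exactly the content of that remark, spelled out. Where you genuinely diverge is the second half: the paper simply writes ``We omit the proof for $(D^{\prime})$ and $(D^{\prime\prime})$,'' whereas you supply a complete argument --- Tonelli to identify $(D^{\prime})$ with $(D^{\prime\prime})$, the Laplace continuity theorem plus complete monotonicity of $p_{n}$ to recover $(A^{\prime\prime})$ from $(D^{\prime\prime})$, and Scheff\'e's lemma to recover $(C^{\prime\prime})$. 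That argument is sound and is a real addition relative to the paper. Two small points to tidy up: (i) you rely on the paper's unproved assertion that $(A^{\prime})\wedge(C^{\prime})$ is equivalent to $(A^{\prime\prime})\wedge(C^{\prime\prime})$; this is legitimate (the paper states it as background, and the needed bound $e^{-t\xi}=o(\widetilde{\phi}(\xi))$ as $\xi\to\infty$ is elementary), but a sentence acknowledging it would make the reduction airtight; (ii) in $(D)$ the statement ranges over all $x\in\boldsymbol{R}$, so the points $x>l$, where $G(x)=\infty$ and condition $(A)$ requires $\widehat{m}_{n}(x)\to1$, need the one-line supplement $m_{n}(x)\geq\left(M_{n}(x)-M_{n}(y)\right)/(x-y)\to\infty$ for fixed $y<l$; the paper is equally silent on this, so it is not a defect specific to your write-up.
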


\begin{proof}
Assume $m_{n}\rightarrow m$ in $\mathcal{E}_{\phi}.$ Then, $(C)$ implies that
there exists $c<l$ such that%
\[%
{\displaystyle\int\nolimits_{-\infty}^{c}}
\phi\left(  M_{n}(y)\right)  dy\leq1,
\]
hence for any $x<c$%
\[
\phi\left(  M_{n}(x)\right)  \left(  c-x\right)  \leq%
{\displaystyle\int\nolimits_{x}^{c}}
\phi\left(  M_{n}(y)\right)  dy\leq%
{\displaystyle\int\nolimits_{-\infty}^{c}}
\phi\left(  M_{n}(y)\right)  dy\leq1,
\]
which shows%
\[
M_{n}(x)\leq\phi^{-1}\left(  \frac{1}{c-x}\right)
\]
for any $n\geq1$ and $x<c.$ Then it is easy to see that $M_{n}(x)\rightarrow
M(x)$ at every point $x,$ and this together with $(C)$ implies $(D)$.
Conversely, for any $\epsilon>0,$ choose $c<l$ such that%
\[%
{\displaystyle\int\nolimits_{-\infty}^{c}}
\phi\left(  M(y)\right)  dy<\epsilon.
\]
Then, clearly $(C)$ follows from $(D)$. The condition $(A)$ can be derived
from $(D)$ by the monotonicity of $\phi$ and $M_{n}.$ We omit the proof for
$(D^{\prime})$ and $(D^{\prime\prime})$.
\end{proof}

\section{Application}

Typical examples of $m$ belonging to $\mathcal{E}$ are%
\[
m_{\alpha}(x)=\left\{
\begin{array}
[c]{lll}%
C_{\alpha}x^{-\beta} & x>0 & \text{if \ }0<\alpha<1\\
e^{x} & x\in\mathbf{R} & \text{if \ }\alpha=1\\
C_{\alpha}\left(  -x\right)  ^{-\beta} & x<0 & \text{if \ }\alpha>1.
\end{array}
\right.  \text{ \ with \ }\beta=\frac{\alpha}{\alpha-1}%
\]
and the spectral measures and $p(t)$ are%
\[
\sigma_{\alpha}(d\xi)=\frac{\alpha^{2\alpha}}{\Gamma\left(  1+\alpha\right)
^{2}}d\xi^{\alpha},\text{ \ }p_{\alpha}\left(  t\right)  =\frac{\alpha
^{2\alpha}}{\Gamma\left(  1+\alpha\right)  }t^{-\alpha},
\]
where%
\[
C_{\alpha}=\left\{
\begin{array}
[c]{lll}%
\left(  \dfrac{1-\alpha}{\alpha}\right)  ^{\frac{\alpha}{1-\alpha}} & , &
0<\alpha<1\\
\left(  \dfrac{\alpha-1}{\alpha}\right)  ^{-\frac{\alpha}{\alpha-1}} & , &
1<\alpha
\end{array}
.\right.
\]
In this section we consider the asymptotic behavior of the spectral measures
and the transition probability densities when strings are close to the above
typical ones. If $\alpha\in\left(  0,2\right)  ,$ the following results are
already known. Here we denote%
\[
f\left(  x\right)  \thicksim g\left(  x\right)  \text{\ \ as \ }%
x\uparrow0,\left(  x\rightarrow\infty\right)
\]
if%
\[
\lim_{x\uparrow0}\frac{f\left(  x\right)  }{g\left(  x\right)  }=1,\text{
\ }\left(  \lim_{x\rightarrow\infty}\frac{f\left(  x\right)  }{g\left(
x\right)  }=1\right)
\]
hold respectively. Let $\varphi$ be a function regularly varying at $0$ with
exponent $\alpha-1.$

\begin{theorem}
\label{t2}$\left(  \text{Kasahara\cite{ka}, Kasahara-Watanabe\cite{k-w3}%
}\right)  $The following asymptotic relationship between $m$ and $p$ is
valid.\newline$(1)$ If $\alpha\in\left(  0,1\right)  $, then%
\[
m(x)\thicksim\frac{\left(  -\beta\right)  ^{\beta}}{x\varphi^{-1}\left(
x\right)  }\ \ \text{as}\ x\uparrow\infty
\]
holds if and only if%
\[
p(t)\thicksim\frac{\alpha^{2\alpha}}{\Gamma\left(  1+\alpha\right)  }\frac
{1}{t}\varphi\left(  \frac{1}{t}\right)  \text{ \ \ as \ }t\rightarrow\infty
\]
\newline$(2)$ If $\alpha\in\left(  1,2\right)  $, then%
\[
m(x)\thicksim\frac{\beta^{\beta}}{-x\varphi^{-1}\left(  -x\right)
}\ \ \text{as}\ x\uparrow0
\]
holds if and only if%
\[
p(t)\thicksim\frac{\alpha^{2\alpha}}{\Gamma\left(  1+\alpha\right)  }\frac
{1}{t}\varphi\left(  \frac{1}{t}\right)  \text{ \ \ as \ }t\rightarrow\infty
\]

\end{theorem}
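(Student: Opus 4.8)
The plan is to derive both equivalences from the two–sided continuity of Krein's correspondence (Theorem \ref{t3}), fed by a scaling argument and classical regular–variation theory. The first ingredient is the exact behaviour of the correspondence under the dilation $\tilde m(x)=\gamma\,m(x/\delta)$ with $\gamma,\delta>0$. Directly from the defining integral equation one obtains $\tilde\varphi_\mu(x)=\varphi_{\gamma\delta^{2}\mu}(x/\delta)$, and inserting this into Lemma \ref{l15} (equivalently into the Green–function identity) yields the companion rule
\[
\tilde p(t)=\frac{1}{\gamma\delta}\,p\!\left(\frac{t}{\gamma\delta^{2}}\right).
\]
One then checks that the typical pair $(m_\alpha,p_\alpha)$ is self–similar for this group: $m_\alpha$ is fixed by the choice $\gamma=\delta^{-\beta}$, while $p_\alpha$ is merely rescaled by the factor $\delta^{\alpha-1}$, so the power $x^{-\beta}$ of $m_\alpha$ and the power $t^{-\alpha}$ of $p_\alpha$ keep their shape simultaneously.

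Next I would recast the two asymptotic statements as convergence of a dilated family toward this self–similar pair. Writing the regularly varying $\varphi$ as $\varphi(s)=s^{\alpha-1}\ell(s)$ with $\ell$ slowly varying, the hypothesis on $m$ asserts exactly that $m$ is regularly varying of index $-\beta$ (at $+\infty$ when $\alpha\in(0,1)$, and at the singular endpoint after recentering when $\alpha\in(1,2)$) with slowly varying part tied to $\ell$. By the uniform convergence theorem for regularly varying functions, this is precisely the statement that, for a suitable normalizer $a(u)$, the dilated strings $m^{[u]}(x)=a(u)^{-1}m(ux)$ converge pointwise to a constant multiple of $m_\alpha$ as $u\to\infty$; applying the same theorem to the Laplace transform, the asserted asymptotic for $p$ is equivalent to the pointwise convergence of the correspondingly dilated $p^{[u]}$ to $p_\alpha$ (the passage between $p$ and $\sigma$, where needed, being Karamata's Tauberian theorem). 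The dilation rule above guarantees that $m^{[u]}$ and $p^{[u]}$ are the two sides of Krein's correspondence for one and the same family of strings.

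The bridge is then Theorem \ref{t3}. After normalizing each $m^{[u]}$ by a translation so that the whole family sits in a common $\mathcal{E}_\phi^{(c)}$ — absorbing the endpoint normalization exactly as in Theorem \ref{t5}, which also renders the stray factor $\delta^{\alpha-1}$ harmless — convergence $m^{[u]}\to m_\alpha$ in $\mathcal{E}_\phi$ is \emph{equivalent} to convergence $\sigma^{[u]}\to\sigma_\alpha$ in $\mathcal{S}_\phi$, i.e. to $p^{[u]}\to p_\alpha$ together with the tightness condition $(C^{\prime\prime})$. Running this equivalence forwards gives the implication from the asymptotic for $m$ to that for $p$, and running it backwards gives the converse. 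Since $\alpha\in(0,2)$ one may simply take the scale function $\phi(t)=t$: for it $m_\alpha\in\mathcal{E}_\phi$ and $\sigma_\alpha\in\mathcal{S}_\phi$, because $\beta>2$ in case $(2)$ and the spectral index $\alpha-1$ is subcritical in case $(1)$, so no finer choice is needed. Finally one reads the convergence of the dilated family back into the claimed one–sided asymptotics through the converse halves of the uniform convergence theorem, extracting the constants $(-\beta)^{\beta}$ and $\alpha^{2\alpha}/\Gamma(1+\alpha)$ from the self–similar pair.

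The hard part will be showing that the dilated family converges \emph{in} $\mathcal{E}_\phi$ (resp. $\mathcal{S}_\phi$) and not merely pointwise, since Theorem \ref{t3} applies only once the tightness conditions (C)$/(C^{\prime\prime})$ hold uniformly in the dilation parameter $u$. Pointwise convergence is automatic, but the uniform control of the boundary contribution — condition (C) at $-\infty$ when $\alpha\in(1,2)$, and condition $(C^{\prime\prime})$ near $t=0$ when $\alpha\in(0,1)$ — must be wrung out of the slowly varying factor $\ell$. This is exactly where Potter's bounds enter: they dominate $\ell(ux)/\ell(u)$ by $C\max(x^{\epsilon},x^{-\epsilon})$ uniformly for large $u$, and since the choice $\phi(t)=t$ leaves a strict gap $2-\alpha>0$ in the relevant integrability exponent, this produces a single integrable majorant for $\phi(M^{[u]})$ (equivalently, via the inequalities of Lemma \ref{l11} that already drive the proofs of Theorems \ref{t8} and \ref{t3}, for $p^{[u]}\phi$) valid for all large $u$. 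Verifying this uniform domination, together with checking that the recentering keeps the family inside one $\mathcal{E}_\phi^{(c)}$, is the only genuinely technical step; the remainder is the assembly of the dilation rule, Theorem \ref{t3}, and the standard regular–variation toolkit.
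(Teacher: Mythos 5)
The paper does not actually prove Theorem \ref{t2}: it is quoted from Kasahara and Kasahara--Watanabe, and the paper's own contribution is the extension to $\alpha\geq2$ (Theorem \ref{t7}, via Propositions \ref{p1} and \ref{p2} and Lemmas \ref{l13} and \ref{l12}). Your strategy --- dilate, pass through the two--sided continuity of the correspondence, and invoke regular--variation theory --- is exactly the strategy of that extension and of the original proofs, so the plan is the right one. Two concrete points need repair, however. First, the scaling rule is miscomputed: for $\tilde m(x)=\gamma\,m(x/\delta)$ the integral equation gives $\tilde\varphi_\mu(x)=\varphi_{\gamma\delta\mu}(x/\delta)$ and hence $\tilde p(t)=\gamma^{-1}p\left(t/(\gamma\delta)\right)$, which is the paper's rule $(\ref{21})$ with $a=1/\delta$, $b=\gamma\delta$; your exponent $\gamma\delta^{2}$ is wrong, and under the correct rule the pair $(m_\alpha,p_\alpha)$ is \emph{exactly} invariant when $\gamma=\delta^{-\beta}$ --- there is no stray factor $\delta^{\alpha-1}$ to absorb. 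That exact self--similarity is what makes the method close up, so the slip is worth fixing even in a sketch.

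Second, and more seriously, in case $(2)$ the converse direction does not end where you end it. Theorems \ref{t8} and \ref{t3} deliver convergence of the dilated strings only \emph{after recentering} by shifts $a_\nu$ defined through $M_\nu(a_\nu)=c$, and the statement $m_\nu(\cdot+a_\nu)\rightarrow m_\alpha$ does not by itself yield $m(x)\thicksim\beta^{\beta}/(-x\varphi^{-1}(-x))$ as $x\uparrow0$: the unknown shifts must be eliminated. In the paper's analogous argument this is precisely the derivation of $(\ref{17})$ (passing to the inverse function $M^{-1}$) combined with Lemma \ref{l12} (monotone density theorem), and it is a genuine piece of work, not a consequence of the converse half of the uniform convergence theorem as you suggest. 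Relatedly, your Potter--bound majorization controls only the portion of the tightness integral where the asymptotics of $p$ at $\infty$ (resp.\ of $M$ at the singular end) is in force; the remaining portion --- $p$ near $t=0$ after undoing the dilation, the term $I_{2}$ in Lemma \ref{l13} --- is governed not by the slowly varying factor but by an a priori integrability $\int_{0}^{1}p(t)\phi(t)\,dt<\infty$. For $\phi(t)=t$ and $\alpha\in(0,2)$ this does follow from the standing regular--string (case $(1)$) or limit--circle (case $(2)$) hypotheses, but it is an input that must be stated, not something wrung out of $\ell$.
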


They showed an analogous result in case $\alpha=1$ in
Kasahara-Watanabe\cite{k-w3}. In this section we extend their results to the
case $\alpha\geq2$ by applying Theorem \ref{t3}. The basic idea, which was
first employed by Kasahara\cite{ka}, is to use the continuity between $m$ and
$p$ and the scaling relationship%
\begin{equation}
abm(ax)\leftrightarrow\frac{1}{ab}p\left(  b^{-1}t\right)  \label{21}%
\end{equation}
for any $a,b>0.$ The proof proceeds just like Kasahara-Watanabe\cite{k-w3},
especially the case $\alpha=1$.

Let $m\in\mathcal{E}$ be a non-decreasing function with $l=0,$ namely%
\[
m(x)<\infty\text{ \ on }\left(  -\infty,0\right)  \text{ \ and }%
m(x)=\infty\text{ \ on }\left(  0,\infty\right)  .
\]
Let $\varphi$ be a regularly varying function at $0$ with exponent $\alpha-1$
and set%
\[
m_{\nu}\left(  x\right)  =\nu\varphi^{-1}\left(  \nu\right)  m(\nu x).
\]
Then from (\ref{21}) we have%
\begin{equation}
\left\{
\begin{array}
[c]{l}%
M_{\nu}\left(  x\right)  =\varphi^{-1}\left(  \nu\right)  M\left(  \nu
x\right) \\
p_{\nu}\left(  t\right)  =\nu^{-1}\varphi^{-1}\left(  \nu\right)
^{-1}p\left(  \varphi^{-1}\left(  \nu\right)  ^{-1}t\right) \\
\sigma_{\nu}\left(  \xi\right)  =\nu^{-1}\varphi^{-1}\left(  \nu\right)
^{-1}\sigma\left(  \varphi^{-1}\left(  \nu\right)  \xi\right)
\end{array}
.\right.  \label{15}%
\end{equation}
To consider an extension of Theorem \ref{t2} we introduce conditions on $m$
and $\sigma$ ;%
\begin{equation}
m(x)\thicksim\frac{\beta^{\beta}}{-x\varphi^{-1}\left(  -x\right)  }\text{
\ \ as \ }x\uparrow0, \label{19}%
\end{equation}
which means%
\begin{equation}
M(x)\thicksim\frac{\beta^{\beta}}{\left(  \beta-1\right)  \varphi^{-1}\left(
-x\right)  }\text{ \ \ \ as \ }x\uparrow0, \label{20}%
\end{equation}
and%
\begin{equation}
p\left(  t\right)  =%
{\displaystyle\int\nolimits_{0}^{\infty}}
e^{-t\xi}d\sigma\left(  \xi\right)  <\infty\text{ \ \ for any }t>0. \label{16}%
\end{equation}

\begin{proposition}
\label{p1}If $m\in\mathcal{E}$ satisfies $(\ref{19}),$ then it holds that%
\begin{equation}
\sigma\left(  \xi\right)  \thicksim\frac{\alpha^{2\alpha}}{\Gamma\left(
1+\alpha\right)  ^{2}}\xi^{\alpha}\text{ \ \ as \ }\xi\downarrow0. \label{23}%
\end{equation}
Moreover, if $m$ satisfies $(\ref{16})$ as well, then $(\ref{24})$ below
holds.%
\begin{equation}
p\left(  t\right)  \thicksim\dfrac{\alpha^{2\alpha}}{\Gamma\left(
1+\alpha\right)  }\dfrac{1}{t}\varphi\left(  \dfrac{1}{t}\right)  \text{ \ as
\ }t\rightarrow\infty. \label{24}%
\end{equation}

\end{proposition}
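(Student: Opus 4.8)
The plan is to exploit the scaling relation (\ref{15}) together with the forward continuity result, Theorem \ref{t9}, for the first assertion, and a Tauberian argument for the second. First I would fix the regularly varying $\varphi$ and introduce the scaled strings $m_{\nu}(x)=\nu\varphi^{-1}(\nu)m(\nu x)$, whose $M$-function, spectral measure and Laplace transform are given by (\ref{15}). The natural candidate for the limit as $\nu\downarrow0$ is the typical string $m_{\alpha}$ of index $\alpha$, whose $M$-function is $M_{\alpha}(x)=\beta^{\beta}(\alpha-1)\left\vert x\right\vert ^{-1/(\alpha-1)}$ and whose spectral data are $\sigma_{\alpha}(\xi)=\frac{\alpha^{2\alpha}}{\Gamma(1+\alpha)^{2}}\xi^{\alpha}$ and $p_{\alpha}(t)=\frac{\alpha^{2\alpha}}{\Gamma(1+\alpha)}t^{-\alpha}$; a direct computation confirms that $m_{\alpha}$ realizes equality in (\ref{19}) when $\varphi$ is a pure power.

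Next comes the key convergence step: using the hypothesis in the form (\ref{20}) and the fact that $\varphi^{-1}$ is regularly varying at $0$ with exponent $1/(\alpha-1)$, I would show $M_{\nu}(x)\to M_{\alpha}(x)$ for every $x<0$. Indeed $M_{\nu}(x)=\varphi^{-1}(\nu)M(\nu x)\sim\frac{\beta^{\beta}}{\beta-1}\varphi^{-1}(\nu)/\varphi^{-1}(\nu\left\vert x\right\vert )$, and the slowly varying parts cancel in the ratio, leaving $\left\vert x\right\vert ^{-1/(\alpha-1)}$. Since the $M_{\nu}$ are convex with a continuous convex limit, this pointwise convergence upgrades (by the uniform convergence theorem for regularly varying functions on compact subsets of $(-\infty,0)$) to convergence of $m_{\nu}$ at continuity points, which is condition (A). The delicate point is the tightness condition (B), that is $\lim_{x\to-\infty}\sup_{0<\nu<\nu_{0}}M_{\nu}(x)=0$: here (\ref{19}) constrains $m$ only near $0$, so I would control $M_{\nu}(x)=\varphi^{-1}(\nu)M(\nu x)$ by splitting according to whether $\nu\left\vert x\right\vert$ is small or large, using Potter's inequalities for the regularly varying $\varphi^{-1}$ in the first regime and the monotonicity of $M$ together with $M(-\infty)=0$ in the second. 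Granting (A) and (B), Theorem \ref{t9} yields $\sigma_{\nu}(\xi)\to\sigma_{\alpha}(\xi)$ at every $\xi>0$ along any sequence $\nu\downarrow0$, and undoing the scaling in (\ref{15}) (take $\xi=1$ and put $\eta=\varphi^{-1}(\nu)$, so $\nu=\varphi(\eta)$) converts this into the asymptotic (\ref{23}) for $\sigma$ at the origin, a regularly varying asymptotic of index $\alpha$.

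For the second assertion I would bring in the extra hypothesis (\ref{16}), whose sole role is to guarantee $p(t)<\infty$ for every $t>0$, so that $p$ is the honest Laplace transform of $\sigma$. Since the behaviour of $p(t)$ as $t\to\infty$ is determined only by $\sigma$ near the origin and is insensitive to the growth of $\sigma$ at $\infty$, I would feed the regular variation of $\sigma$ at $0$ obtained above into Karamata's Tauberian theorem; keeping track of the constants and of the slowly varying factor carried by $\varphi$ yields exactly (\ref{24}). This is the Tauberian counterpart of the continuity theorem \ref{t3}: were one to know in addition that $\sigma\in\mathcal{S}_{\phi}$ for a scale $\phi$ adapted to $\alpha$, the same conclusion would follow by establishing $m_{\nu}\to m_{\alpha}$ in $\mathcal{E}_{\phi}$ and reading $p_{\nu}\to p_{\alpha}$ off Theorem \ref{t3} and the scaling (\ref{15}).

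I expect the tightness estimate (B) to be the main obstacle, precisely because (\ref{19}) controls $m$ only at its right endpoint $x=0$ while (B) is a statement at $-\infty$; the resolution is that the regular variation forced by (\ref{19}), via Potter's bounds, transfers the required decay to the whole scaled family uniformly in small $\nu$, the far region being absorbed by $M(-\infty)=0$. A secondary technical point is checking that the pointwise limit $M_{\nu}\to M_{\alpha}$ is strong enough to pass to the densities $m_{\nu}$, which is handled by the convexity of the $M_{\nu}$ and the uniform convergence theorem for regularly varying functions.
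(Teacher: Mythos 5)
Your proposal follows essentially the same route as the paper: rescale via (\ref{15}), show $M_{\nu}(x)\rightarrow\frac{\beta^{\beta}}{\beta-1}\left(-x\right)^{1-\beta}$ from (\ref{20}) and regular variation, invoke Theorem \ref{t9} to get $\sigma_{\nu}\rightarrow\sigma_{\alpha}$, undo the scaling to obtain (\ref{23}), and then pass from $\sigma$ near $0$ to $p$ at $\infty$ by the Karamata-type limit theorem for Laplace transforms (the paper calls this last, easy direction the Abelian theorem, which is the more accurate name for what you are using). Your extra care with conditions (A) and (B) via convexity and Potter bounds is sound but not needed beyond what monotonicity and the pointwise convergence of $M_{\nu}$ already give.
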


\begin{proof}
Since%
\[
M_{\nu}\left(  x\right)  =%
{\displaystyle\int\nolimits_{-\infty}^{x}}
m_{\nu}\left(  y\right)  dy=\varphi^{-1}\left(  \nu\right)  M\left(  \nu
x\right)
\]
holds, from (\ref{20}) we know%
\[
M_{\nu}\left(  x\right)  \rightarrow\frac{\beta^{\beta}}{\beta-1}\left(
-x\right)  ^{1-\beta}\ \ \text{as}\ \nu\rightarrow0
\]
for any $x<0$, which means that $\left\{  M_{\nu}\right\}  $ satisfies the
condition (B). Applying Theorem \ref{t9} yields%
\[
\sigma_{\nu}\left(  \xi\right)  \rightarrow\frac{\alpha^{2\alpha}}%
{\Gamma\left(  1+\alpha\right)  ^{2}}\xi^{\alpha}\text{ \ for any }\xi>0
\]
as$\ \nu\rightarrow0$, which is equivalent to (\ref{23}) due to (\ref{15}). If
we assume the condition (\ref{16}) as well on $\sigma$, the Abelian theorem
for Laplace transform shows the property (\ref{24}).
\end{proof}

To obtain a converse statement to the above proposition we need

\begin{lemma}
\label{l13}Assume $\sigma\in\mathcal{S}$ satisfies the condition $(\ref{16})$
and a condition%
\begin{equation}
\int_{0}^{1}p(t)\phi\left(  t\right)  dt<\infty\label{25}%
\end{equation}
for a positive function $\phi$ on $\left[  0,1\right]  $ satisfying%
\begin{equation}
\phi\left(  st\right)  \leq Ct^{k}\phi\left(  s\right)  \text{ \ for any
}s,t\leq1\text{ \ for some }k>\alpha-1. \label{29}%
\end{equation}
Then $\left\{  p_{\nu}\left(  t\right)  \right\}  $ satisfies the condition
$(28),$ namely%
\begin{equation}
\sup_{\nu>0}\int_{0}^{1}p_{\nu}\left(  t\right)  \phi\left(  t\right)
dt<\infty. \label{31}%
\end{equation}

\end{lemma}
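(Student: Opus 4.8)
The plan is to use the scaling relations (\ref{15}) to pull everything back to the fixed pair $(m,\sigma)$ and to isolate the one place where genuine decay of $p$ is required. Put $b=\varphi^{-1}(\nu)$; since $\varphi$ has positive exponent $\alpha-1$ we have $b\downarrow0$ as $\nu\downarrow0$, and this is the binding regime. Inserting $p_\nu(t)=\nu^{-1}b^{-1}p(t/b)$ and substituting $s=t/b$ gives
\[
\int_0^1 p_\nu(t)\phi(t)\,dt=\nu^{-1}\int_0^{1/b}p(s)\phi(bs)\,ds .
\]
I would split this integral at $s=1$ and treat the two ranges separately. On $0\le s\le1$ (and $\nu$ small enough that $b\le1$) both factors lie in $[0,1]$, so (\ref{29}) applies as $\phi(bs)\le C\,b^{k}\phi(s)$, whence
\[
\nu^{-1}\int_0^{1}p(s)\phi(bs)\,ds\le C\,\nu^{-1}b^{k}\int_0^1 p(s)\phi(s)\,ds .
\]
The last integral is finite by (\ref{25}), and the prefactor $\nu^{-1}b^{k}=\nu^{-1}\varphi^{-1}(\nu)^{k}$ is regularly varying at $0$ with exponent $(k-\alpha+1)/(\alpha-1)$, positive exactly because $k>\alpha-1$; hence it stays bounded (indeed tends to $0$) as $\nu\downarrow0$. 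This range is under control.

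The range $1\le s\le1/b$ is the main obstacle, since $p$ is now sampled at large arguments. After $u=bs$,
\[
\nu^{-1}\int_1^{1/b}p(s)\phi(bs)\,ds=\nu^{-1}b^{-1}\int_b^1 p(u/b)\,\phi(u)\,du .
\]
Boundedness here hinges on the decay of $p$ at infinity being matched to the index of $\varphi$ — concretely, on $p(s)$ behaving like the self-similar $s^{-1}\varphi(1/s)$ of (\ref{24}). Granting such matched decay, $p(u/b)$ supplies a factor $\varphi(b/u)$, the prefactor $\nu^{-1}b^{-1}$ collapses against it by the very choice of normalization $\varphi^{-1}(\nu)$ in (\ref{15}) (the slowly varying parts cancel), and one is left with $\int_b^1 u^{-\alpha}\phi(u)\,du\le\int_0^1 u^{-\alpha}\phi(u)\,du<\infty$, the finiteness being once again exactly the statement $k>\alpha-1$ via $\phi(u)\le Cu^{k}\phi(1)$. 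Thus the whole difficulty is to produce this matched decay uniformly in $\nu$.

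To obtain the matched decay from the stated hypotheses rather than assume it, I would run the estimate through the paper's own tool. Passing from $\int_0^1 p_\nu\phi$ to $\int_0^\infty p_\nu\phi\,e^{\lambda t}\,dt$ (using $e^{\lambda t}\ge e^{\lambda}$ on $[0,1]$ for a fixed $\lambda<0$) and applying the second inequality of Lemma \ref{l11}(2) to $m_\nu$ bounds the latter by $C_\phi\int_{-\infty}^{a_\nu}\phi(M_\nu)\,dx$ plus a boundary term proportional to $\varphi_\lambda^{(\nu)\prime}(a_\nu)^{-1}$. The first piece is controlled exactly as the range $[0,1]$ above: (\ref{25}) places $\sigma$ in $\mathcal{S}_\phi$, hence $m\in\mathcal{E}_\phi$ by the equivalence proved in Section 5, and the scaling $M_\nu(x)=\varphi^{-1}(\nu)M(\nu x)$ together with (\ref{29}) reinstates the bounded factor $\nu^{-1}\varphi^{-1}(\nu)^{k}$. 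The boundary term is the far range in disguise; with $\varphi_\lambda^{(\nu)\prime}(a_\nu)\ge-\lambda\,m_\nu(a_\nu)$ it reduces to a lower bound on $m_\nu(a_\nu)=\nu\varphi^{-1}(\nu)m(\nu a_\nu)$, uniform in $\nu$, secured by fixing $a_\nu$ through a normalization as at $x=0$ in the proof of Theorem \ref{t3}. Showing that this lower bound survives uniformly — equivalently, that the rescaled strings neither spread out nor concentrate in a way that destroys the self-similar decay rate of $p$ — is the step I expect to be genuinely hard; granting it, continuity of $\nu\mapsto\int_0^1 p_\nu\phi$ disposes of compact ranges of $\nu$ and (\ref{31}) follows.
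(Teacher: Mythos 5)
Your decomposition is the right one, and your treatment of the near range $0\le s\le 1$ is exactly the paper's estimate of its term $I_{2}$: the factor $\nu^{-1}\varphi^{-1}(\nu)^{k}$ extracted via (\ref{29}) is regularly varying at $0$ with exponent $k/(\alpha-1)-1>0$, hence bounded. The gap is in the far range. You correctly identify that boundedness there requires $p(s)\lesssim s^{-1}\varphi(1/s)$ at infinity, but you then treat this matched decay as something to be \emph{derived} from (\ref{16}) and (\ref{25}), and your sketch of that derivation (through Lemma \ref{l11}(2) and a uniform lower bound on $m_{\nu}(a_{\nu})$) is explicitly left open. That step cannot be closed from the stated hypotheses: (\ref{25}) constrains $p$ only near $t=0$ and (\ref{16}) gives no rate at all, while the crude bound $p(u/b)\le p(1)$ leaves the divergent prefactor $\nu^{-1}\varphi^{-1}(\nu)^{-1}$ intact. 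The paper does not derive the decay; it simply invokes (\ref{24}), i.e.\ the asymptotics $p(t)\thicksim\frac{\alpha^{2\alpha}}{\Gamma(1+\alpha)}t^{-1}\varphi(1/t)$ is an unstated but essential hypothesis of the lemma, available in the only place it is used (Proposition \ref{p2}, where (\ref{23})$\Leftrightarrow$(\ref{24}) is assumed). So your diagnosis of the difficulty is accurate and in fact exposes an imprecision in the lemma's statement, but your proof is not complete as written.

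Two smaller points. First, even granting (\ref{24}), the far range does not close by saying the slowly varying parts ``cancel'': one needs a quantitative bound on the ratio of slowly varying factors. The paper splits at $t=N\varphi^{-1}(\nu)$ with $N$ large and uses the Karamata representation (\ref{34}) to get $l(\varphi^{-1}(\nu)^{-1}t)/l(\varphi^{-1}(\nu)^{-1})\le t^{-\delta}$ uniformly on that range, with $\delta<k-(\alpha-1)$ chosen so that $\int_{0}^{1}t^{-\alpha-\delta+k}\,dt<\infty$; splitting at $s=1$ as you do would not give this uniformity. Second, the collapse of the prefactor is the exact identity $\nu^{-1}\varphi^{-1}(\nu)^{-1+\alpha}l(\varphi^{-1}(\nu)^{-1})=1$, which is where the normalization in (\ref{15}) enters and is worth writing out.
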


\begin{proof}
Since $\varphi$ is a regularly varying function at $0$ with exponent
$\alpha-1,$ $t^{-1}\varphi\left(  t^{-1}\right)  $ is regularly varying at
$\infty$ with exponent $-\alpha$, and there exists a slowly varying function
$l(t)$ such that%
\[
\frac{1}{t}\varphi\left(  \frac{1}{t}\right)  =t^{-\alpha}l(t).
\]
Generally a slowly varying function $l(t)$ has an expression%
\begin{equation}
l(t)=c(t)\exp\left(  \int_{a}^{t}\frac{\epsilon\left(  u\right)  }%
{u}du\right)  \label{34}%
\end{equation}
with a positive constant $a$ and functions $c(t),$ $\epsilon\left(  t\right)
$ behaving as%
\[
c(t)\rightarrow c>0,\text{ \ }\epsilon\left(  t\right)  \rightarrow0\text{
\ as \ }t\rightarrow\infty.
\]
Now we decompose the integral in (\ref{31}) into two parts:%
\[
\int_{0}^{1}p_{\nu}\left(  t\right)  \phi\left(  t\right)  dt=\nu^{-1}%
\varphi^{-1}\left(  \nu\right)  ^{-1}\int_{0}^{1}p\left(  \varphi^{-1}\left(
\nu\right)  ^{-1}t\right)  \phi\left(  t\right)  dt=I_{1}+I_{2}%
\]
with%
\[
\left\{
\begin{array}
[c]{l}%
I_{1}=\nu^{-1}\varphi^{-1}\left(  \nu\right)  ^{-1}\int_{N\varphi^{-1}\left(
\nu\right)  }^{1}p\left(  \varphi^{-1}\left(  \nu\right)  ^{-1}t\right)
\phi\left(  t\right)  dt,\\
I_{2}=\nu^{-1}\varphi^{-1}\left(  \nu\right)  ^{-1}\int_{0}^{N\varphi
^{-1}\left(  \nu\right)  }p\left(  \varphi^{-1}\left(  \nu\right)
^{-1}t\right)  \phi\left(  t\right)  dt,
\end{array}
\right.
\]
where $N$ is chosen so that%
\[
\left\vert \epsilon\left(  u\right)  \right\vert \leq\delta\text{ \ for any
}u\geq N
\]
holds with a positive $\delta$ satisfying $\delta<k-\left(  \alpha-1\right)
.$ Since the condition (\ref{24}) implies%
\[
0<\frac{p\left(  \varphi^{-1}\left(  \nu\right)  ^{-1}t\right)  }{\varphi
^{-1}\left(  \nu\right)  ^{\alpha}t^{-\alpha}l\left(  \varphi^{-1}\left(
\nu\right)  ^{-1}t\right)  }\leq C^{\prime}%
\]
for any $t\geq N\varphi^{-1}\left(  \nu\right)  $ with some constant
$C^{\prime}$, we have%
\[
I_{1}\leq C^{\prime}\nu^{-1}\varphi^{-1}\left(  \nu\right)  ^{-1+\alpha
}l\left(  \varphi^{-1}\left(  \nu\right)  ^{-1}\right)  \int_{N\varphi
^{-1}\left(  \nu\right)  }^{1}t^{-\alpha}\frac{l\left(  \varphi^{-1}\left(
\nu\right)  ^{-1}t\right)  }{l\left(  \varphi^{-1}\left(  \nu\right)
^{-1}\right)  }\phi\left(  t\right)  dt.
\]
First note%
\[
\nu^{-1}\varphi^{-1}\left(  \nu\right)  ^{-1+\alpha}l\left(  \varphi
^{-1}\left(  \nu\right)  ^{-1}\right)  =\nu^{-1}\varphi^{-1}\left(
\nu\right)  ^{-1+\alpha}\varphi^{-1}\left(  \nu\right)  ^{1-\alpha}%
\varphi\left(  \varphi^{-1}\left(  \nu\right)  \right)  =1,
\]
and the (\ref{34}) shows for $t\geq N\varphi^{-1}\left(  \nu\right)  $%
\begin{align*}
\frac{l\left(  \varphi^{-1}\left(  \nu\right)  ^{-1}t\right)  }{l\left(
\varphi^{-1}\left(  \nu\right)  ^{-1}\right)  }  &  =\exp\left(  \int
_{a}^{\varphi^{-1}\left(  \nu\right)  ^{-1}t}\frac{\epsilon\left(  u\right)
}{u}du-\int_{a}^{\varphi^{-1}\left(  \nu\right)  ^{-1}}\frac{\epsilon\left(
u\right)  }{u}du\right) \\
&  =\exp\left(  -\int_{\varphi^{-1}\left(  \nu\right)  ^{-1}t}^{\varphi
^{-1}\left(  \nu\right)  ^{-1}}\frac{\epsilon\left(  u\right)  }{u}du\right)
\\
&  \leq\exp\left(  \delta\log t^{-1}\right)  =t^{-\delta}.
\end{align*}
In (\ref{29}) setting $s=1,$ we have $\phi\left(  t\right)  \leq C\phi\left(
1\right)  t^{k},$ hence%
\[
I_{1}\leq C\int_{N\varphi^{-1}\left(  \nu\right)  }^{1}t^{-\alpha}t^{-\delta
}\phi\left(  t\right)  dt\leq CC^{\prime}\phi\left(  1\right)  \int_{0}%
^{1}t^{-\alpha-\delta+k}dt
\]
is valid. Due to (\ref{29}) $I_{2}$ can be estimated as%
\begin{align*}
I_{2}  &  =\nu^{-1}\int_{0}^{N}p\left(  s\right)  \phi\left(  \varphi
^{-1}\left(  \nu\right)  s\right)  ds\\
&  \leq C\nu^{-1}\left(  \varphi^{-1}\left(  \nu\right)  \right)  ^{k}\int
_{0}^{N}p\left(  s\right)  \phi\left(  s\right)  ds\leq C^{\prime\prime}%
\nu^{-1+\frac{k}{\alpha-1}-\delta^{\prime}}\int_{0}^{N}p\left(  s\right)
\phi\left(  s\right)  ds,
\end{align*}
where $\delta^{\prime}>0$ can be chosen so that%
\[
-1+\frac{k}{\alpha-1}-\delta^{\prime}>0
\]
holds. Consequently we have%
\[
\int_{0}^{1}p_{\nu}\left(  t\right)  \phi\left(  t\right)  dt\leq CC^{\prime
}\phi\left(  1\right)  \int_{0}^{1}t^{-\alpha-\delta+k}dt+C^{\prime\prime}%
\nu^{-1+\frac{k}{\alpha-1}-\delta^{\prime}}\int_{0}^{N}p\left(  s\right)
\phi\left(  s\right)  ds,
\]
and (\ref{25}) implies the second assertion of (\ref{31}).
\end{proof}

\begin{remark}
\label{r1}The property $(\ref{29})$ are satisfied not only by $\phi\left(
t\right)  =t^{k}$ with $k>\alpha-1$ but also by subexponential functions: for
$p>1,c>0$%
\[
\phi\left(  t\right)  =\exp\left(  -c\left(  -\log t\right)  ^{p}\right)  .
\]

\end{remark}

Within the knowledge of the previous sections the best converse statement to
Proposition \ref{p1} is as follows.

\begin{proposition}
\label{p2}Let $m\in\mathcal{E}$ be a non-decreasing function with $l=0$ and
$M\left(  0\right)  =\infty.$ Assume $\sigma\in\mathcal{S}$ satisfies the
conditions $(\ref{16})$ and $(\ref{25})$ with a positive function $\phi$ on
$\left[  0,1\right]  $ satisfying $(S.1)$ and $(\ref{29})$. Then the property
$(\ref{23})$ $($equivalently $(\ref{24}))$ implies $(\ref{19})$.
\end{proposition}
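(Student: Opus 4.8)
The plan is to run the now-standard ``scaling plus continuity'' argument of Kasahara--Watanabe in the regime $\alpha\geq2$, with Theorem \ref{t8} and Lemma \ref{l13} as the two analytic inputs. Starting from the asymptotic (\ref{24}) of $p$ (which on the spectral side is the content of (\ref{23}), via the Abelian/Tauberian link already used in Proposition \ref{p1}), I would rescale $m$ by
\[
m_{\nu}(x)=\nu\varphi^{-1}(\nu)m(\nu x),
\]
so that the transforms (\ref{15}) apply, and aim to show that $m_{\nu}$ converges, as $\nu\downarrow0$, to the self-similar typical string $m_{\alpha}$, whose primitive is $M_{\alpha}(x)=\frac{\beta^{\beta}}{\beta-1}(-x)^{1-\beta}$. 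Once the pointwise convergence $M_{\nu}\to M_{\alpha}$ is in hand, evaluating at $x=-1$ gives $\varphi^{-1}(\nu)M(-\nu)\to\frac{\beta^{\beta}}{\beta-1}$, which is exactly (\ref{20}); and (\ref{20}) is equivalent to the desired (\ref{19}) by the monotone density theorem for regularly varying functions. So the whole proof reduces to establishing the convergence of the rescaled strings.

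On the spectral side this convergence is immediate. Inserting (\ref{24}) into the second line of (\ref{15}) and using that $\varphi$ is regularly varying at $0$ with exponent $\alpha-1$, so that $\varphi(\varphi^{-1}(\nu)/t)\sim t^{-(\alpha-1)}\nu$, a short computation gives
\[
p_{\nu}(t)=\nu^{-1}\varphi^{-1}(\nu)^{-1}p\bigl(\varphi^{-1}(\nu)^{-1}t\bigr)\longrightarrow\frac{\alpha^{2\alpha}}{\Gamma(1+\alpha)}\,t^{-\alpha}=p_{\alpha}(t)
\]
for every $t>0$, whence $\sigma_{\nu}\to\sigma_{\alpha}$ at continuity points by the continuity theorem for Laplace transforms, $\sigma_{\alpha}$ being non-trivial. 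The uniform control needed to invert the correspondence is supplied by Lemma \ref{l13}: conditions (\ref{16}), (\ref{25}), (\ref{29}) give precisely (\ref{31}), i.e. the hypothesis (\ref{28}) of Theorem \ref{t8} for the family $\{p_{\nu}\}$. Applying Theorem \ref{t8} (any fixed $c$ is admissible, since $M_{\nu}(l_{\nu})=\varphi^{-1}(\nu)M(0)=\infty$) then yields shifts $a_{\nu}$, determined by $M_{\nu}(a_{\nu})=c$, and a unique $m^{\ast}\in\mathcal{E}^{(c)}$ with spectral measure $\sigma_{\alpha}$ such that $m_{\nu}(\cdot+a_{\nu})\to m^{\ast}$ in $\mathcal{E}$; here $m^{\ast}$ is the shift of $m_{\alpha}$ given by $M^{\ast}(x)=M_{\alpha}(x+\tau)$, where $\tau<0$ is the unique solution of $M_{\alpha}(\tau)=c$.

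The main obstacle is that Theorem \ref{t8} delivers convergence only \emph{modulo the normalizing shift} $a_{\nu}$: by Theorem \ref{t1} the limit is pinned only up to translation, and to read off (\ref{20}) I must show that these shifts settle down, namely $a_{\nu}\to\tau$, so that the translation is undone and $M_{\nu}\to M_{\alpha}$ unshifted. Lemma \ref{l3} already yields one inequality, $\limsup_{\nu\to0}a_{\nu}\leq\tau$; the reverse is where the self-similarity of the limit must enter. I would exploit the exact scale covariance inherited from (\ref{21}),
\[
M_{\rho\nu}(x)=\frac{\varphi^{-1}(\rho\nu)}{\varphi^{-1}(\nu)}\,M_{\nu}(\rho x),\qquad\frac{\varphi^{-1}(\rho\nu)}{\varphi^{-1}(\nu)}\longrightarrow\rho^{1/(\alpha-1)}\ \ (\nu\downarrow0).
\]
If along a subsequence the unshifted $M_{\nu}$ converge to a limit $L$, this relation shows $M_{\rho\nu}\to\rho^{1/(\alpha-1)}L(\rho\,\cdot\,)$, so every dilate $\rho^{1/(\alpha-1)}L(\rho\,\cdot\,)$ is again a subsequential limit of the family, hence (by Theorem \ref{t8}) of the shifted form $M_{\alpha}(\cdot-a'_{\ast}+\tau)$. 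Matching the homogeneous profile produced by dilation against these translated copies of $M_{\alpha}$ forces the shift to be $\tau$ and $L=M_{\alpha}$; since every subsequence then gives the same limit, the full family converges, $M_{\nu}\to M_{\alpha}$, and the proof finishes as in the first paragraph.

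The delicate point inside this last step is to guarantee that the unshifted $M_{\nu}$ admit finite subsequential limits at all, equivalently that the shifts $a_{\nu}$ stay bounded below; otherwise the dilation argument degenerates by pushing $a'_{\ast}\to-\infty$ as $\rho\downarrow0$. I would secure this boundedness from the tightness built into $\mathcal{E}$-convergence to a \emph{non-trivial} limit, i.e. condition (B) for the shifted family $\{m_{\nu}(\cdot+a_{\nu})\}$ together with $M^{\ast}(0)=c>0$, which prevents mass from escaping to $-\infty$ and so confines $a_{\nu}$ to a bounded window around $\tau$. Combining this with the upper bound from Lemma \ref{l3} closes the argument and identifies the shift limit as exactly $\tau$.
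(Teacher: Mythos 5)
Your setup (the rescaling $m_{\nu}$, the spectral convergence $p_{\nu}\to p_{\alpha}$, Lemma \ref{l13} supplying the uniform bound (\ref{31}), and Theorem \ref{t8} giving $m_{\nu}(\cdot+a_{\nu})\to m^{\ast}$) coincides with the paper's, and you have correctly isolated the crux: Theorem \ref{t8} identifies the limit only modulo the normalizing shifts $a_{\nu}$. The gap lies in your resolution of this. First, the dilation argument cannot by itself pin down the shift: the family of translates $\left\{  M_{\alpha}\left(  \cdot-u\right)  \right\}  _{u}$ of the homogeneous profile $M_{\alpha}(x)=\frac{\beta^{\beta}}{\beta-1}(-x)^{1-\beta}$ is itself invariant under the dilation $L\mapsto\rho^{1/(\alpha-1)}L(\rho\,\cdot)$ (a dilate of a translate of $M_{\alpha}$ is again a translate, the parameter transforming as $u\mapsto u/\rho$), so ``matching against translated copies'' yields no contradiction for a nonzero shift unless one already knows the set of subsequential shift-limits is bounded and then lets $\rho\downarrow0$. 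Second --- and this is the genuine gap --- the lower boundedness of $a_{\nu}$ does not follow from condition (B) for the shifted family: (B) controls $M_{\nu}(\cdot+a_{\nu})$ uniformly to the left of $a_{\nu}$, i.e.\ only \emph{relative} to the shift, and says nothing about where $a_{\nu}$ itself sits on the line. Since $a_{\nu}=\nu^{-1}M^{-1}\left(  c\,\varphi^{-1}\left(  \nu\right)  ^{-1}\right)  $, confining $a_{\nu}$ to a compact subset of $(-\infty,0)$ is already equivalent to a two-sided-bound version of the conclusion (\ref{20}); this step is the heart of the proposition and cannot be obtained by appealing to tightness of the shifted family.

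The paper sidesteps the shift problem entirely. From $\widetilde{M}_{\nu}(x)=M_{\nu}(x+a_{\nu}+1)\to c(-x)^{1-\beta}$ it passes to the inverse function: with $\lambda=c\varphi^{-1}\left(  \nu\right)  ^{-1}$ the unknown shift enters only through $M^{-1}(\lambda)=\nu a_{\nu}$, which cancels in the increment $M^{-1}(\lambda y)-M^{-1}(\lambda)$, producing the de Haan--type relation (\ref{17}). Lemma \ref{l12} then derives (\ref{19}) from (\ref{17}) via the monotone density theorem together with an integration anchored at $M(0)=\infty$, which is precisely where the absolute normalization lost in taking increments is recovered. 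To salvage your route you would need an independent proof that $\underline{\lim}_{\nu\to0}\,a_{\nu}>-\infty$; absent that, the increment argument is the way to close the proof.
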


\begin{proof}
First note (\ref{23}) is equivalent to%
\[
\sigma_{\nu}\left(  \xi\right)  \rightarrow\frac{\alpha^{2\alpha}}%
{\Gamma\left(  1+\alpha\right)  ^{2}}\xi^{\alpha}\text{ \ \ for any }\xi>0,
\]
as $\nu\rightarrow0.$ Since we are assuming $M\left(  0\right)  =\infty,$%
\[
M_{\nu}\left(  0\right)  =\varphi^{-1}\left(  \nu\right)  M\left(  0\right)
=\infty
\]
holds for any $\nu>0$, and there exists uniquely $a_{\nu}<0$ such that%
\[
M_{\nu}\left(  a_{\nu}\right)  =\frac{\beta^{\beta}}{\beta-1}\equiv c.
\]
Set%
\[
\widetilde{M}_{\nu}\left(  x\right)  =M_{\nu}\left(  x+a_{\nu}+1\right)  .
\]
Then, taking $-1$ instead of $0$ as a normalization point, Lemma \ref{l13}
makes it possible to apply Theorem \ref{t8} and we have%
\[
\widetilde{M}_{\nu}\left(  x\right)  \rightarrow\left\{
\begin{array}
[c]{ll}%
c\left(  -x\right)  ^{1-\beta} & \text{for }x<0\\
\infty & \text{for }x>0
\end{array}
\right.
\]
holds in $\mathcal{E}$ as $\nu\rightarrow0,$ from which%
\begin{equation}
\varphi^{-1}\left(  \nu\right)  M\left(  \nu\left(  x+a_{\nu}+1\right)
\right)  \rightarrow\left\{
\begin{array}
[c]{ll}%
c\left(  -x\right)  ^{1-\beta} & \text{for }x<0\\
\infty & \text{for }x>0
\end{array}
\right.  \label{12}%
\end{equation}
follows. To simplify the involved formula (\ref{12}) we take their inverse.
Set%
\[
u=M\left(  \nu\left(  x+a_{\nu}+1\right)  \right)  ,\text{ \ \ }%
\lambda=c\varphi^{-1}\left(  \nu\right)  ^{-1}.
\]
Since $\varphi^{-1}\left(  \nu\right)  M\left(  \nu a_{\nu}\right)  =c,$ we
easily see%
\[
\varphi\left(  c\lambda^{-1}\right)  \left(  x+1\right)  +M^{-1}\left(
\lambda\right)  =M^{-1}\left(  u\right)  .
\]
Denoting $y=\lambda^{-1}u,$ (\ref{12}) is equivalent to%
\[
y\rightarrow\left(  -x\right)  ^{1-\beta},
\]
from which%
\[
\frac{M^{-1}\left(  \lambda y\right)  -M^{-1}\left(  \lambda\right)  }%
{\varphi\left(  c\lambda^{-1}\right)  }=x+1\rightarrow1-y^{-\left(
\beta-1\right)  }%
\]
follows for any $y>0$ as $\lambda\rightarrow\infty$. Since $\varphi$ is
regularly varying at $0$ with exponent $\alpha-1$,%
\[
\frac{\varphi\left(  c\lambda^{-1}\right)  }{\varphi\left(  \lambda
^{-1}\right)  }\rightarrow c^{\alpha-1}=\left(  \alpha-1\right)  ^{-1}%
\alpha^{\alpha}\text{ \ \ as }\lambda\rightarrow\infty.
\]
and%
\begin{equation}
\lim_{\lambda\rightarrow\infty}\frac{M^{-1}\left(  \lambda x\right)
-M^{-1}\left(  \lambda\right)  }{\varphi\left(  \lambda^{-1}\right)  }=\left(
\alpha-1\right)  ^{-1}\alpha^{\alpha}\left(  1-x^{-\left(  \alpha-1\right)
}\right)  \label{17}%
\end{equation}
follow. Then Lemma \ref{l12} below shows (\ref{19})$.$
\end{proof}

\begin{lemma}
\label{l12}$(\ref{17})$ implies $(\ref{19})$.
\end{lemma}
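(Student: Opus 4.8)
The plan is to read $(\ref{17})$ as a de Haan-type statement about the monotone function $M^{-1}$ and to convert it into a genuine asymptotic equivalence, from which $(\ref{20})$ (equivalently $(\ref{19})$) follows by inversion together with a check of constants. Write $g(\lambda)=M^{-1}(\lambda)$. Since $M(0)=\infty$, the function $M$ maps $[l_{-},0)$ strictly increasingly onto $[0,\infty)$, so $g$ is increasing on $(0,\infty)$ with $g(\lambda)\uparrow 0$ as $\lambda\to\infty$; hence $G(\lambda):=-g(\lambda)=-M^{-1}(\lambda)$ is positive, decreasing, and tends to $0$. Setting $c=(\alpha-1)^{-1}\alpha^{\alpha}$, the relation $(\ref{17})$ reads
\[
\frac{G(\lambda)-G(\lambda x)}{\varphi(\lambda^{-1})}\longrightarrow c\bigl(1-x^{-(\alpha-1)}\bigr)\qquad(\lambda\to\infty)
\]
for every $x>0$, with auxiliary function $\varphi(\lambda^{-1})$, which is regularly varying at $\infty$ of index $1-\alpha<0$.

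First I would upgrade this difference relation to the equivalence $G(\lambda)=-M^{-1}(\lambda)\sim c\,\varphi(\lambda^{-1})$ as $\lambda\to\infty$. Fixing $x>1$ and applying the displayed relation with $\lambda$ replaced by $\lambda x^{k}$, and using that $\varphi$ is regularly varying of index $\alpha-1$ at $0$ so that $\varphi((\lambda x^{k})^{-1})\sim x^{-k(\alpha-1)}\varphi(\lambda^{-1})$, one gets
\[
G(\lambda x^{k})-G(\lambda x^{k+1})\sim c\bigl(1-x^{-(\alpha-1)}\bigr)x^{-k(\alpha-1)}\varphi(\lambda^{-1}).
\]
Summing over $k\ge 0$ telescopes the left side to $G(\lambda)$ (because $G(\lambda x^{k})\to 0$), while the geometric series on the right sums to $c\,\varphi(\lambda^{-1})$, giving the claimed equivalence. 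The one delicate point, and the main obstacle, is the interchange of $\lambda\to\infty$ with the infinite summation: this is exactly where the monotonicity of $M^{-1}$ enters, furnishing the uniform control needed. Concretely I would invoke the uniform convergence theorem for the class $\Pi$ (or, equivalently, de Haan's theorem that a $\Pi$-function with an auxiliary function of negative index and a finite limit has a regularly varying tail) to legitimize the passage to the limit.

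With $G(\lambda)\sim c\,\varphi(\lambda^{-1})$ in hand the remainder is routine. Substituting $\lambda=M(x)$ and letting $x\uparrow 0$ yields $-x\sim c\,\varphi(M(x)^{-1})$, hence $M(x)^{-1}\sim\varphi^{-1}(-x/c)$; and since $\varphi^{-1}$ is regularly varying at $0$ of index $1/(\alpha-1)$, we have $\varphi^{-1}(-x/c)\sim c^{-1/(\alpha-1)}\varphi^{-1}(-x)$, so that
\[
M(x)\sim\frac{c^{\,1/(\alpha-1)}}{\varphi^{-1}(-x)}\qquad(x\uparrow 0).
\]
Finally I would verify the constant using $\beta=\alpha/(\alpha-1)$ and $\beta-1=1/(\alpha-1)$:
\[
c^{\,1/(\alpha-1)}=\bigl((\alpha-1)^{-1}\alpha^{\alpha}\bigr)^{1/(\alpha-1)}=(\alpha-1)^{-1/(\alpha-1)}\alpha^{\alpha/(\alpha-1)}=\frac{\beta^{\beta}}{\beta-1},
\]
so that $M(x)\sim\beta^{\beta}\bigl((\beta-1)\varphi^{-1}(-x)\bigr)^{-1}$, which is precisely $(\ref{20})$ and hence $(\ref{19})$, completing the argument.
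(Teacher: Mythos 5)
Your argument is correct, but it takes a genuinely different route from the paper's. The paper \emph{differentiates} (\ref{17}) in $x$: since $M^{-1}$ has the monotone density $1/m(M^{-1}(\cdot))$, the monotone density theorem turns (\ref{17}) into $\lim_{u\uparrow 0} M(u)\bigl(\varphi(M(u)^{-1})\,m(u)\bigr)^{-1}=\alpha^{\alpha}$; this is then integrated against $dM$ over $(x,0)$, and Karamata's theorem for $\int_0^y z^{-1}\varphi(z)\,dz\sim\varphi(y)/(\alpha-1)$ yields $\varphi(M(x)^{-1})\sim(\alpha-1)\alpha^{-\alpha}(-x)$, giving (\ref{20}) and (\ref{19}) simultaneously because $m$ has been carried along throughout. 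You instead \emph{integrate} (\ref{17}) over a geometric progression of scales — the standard de Haan telescoping argument for an auxiliary function of negative index — to upgrade the incremental relation to the equivalence $-M^{-1}(\lambda)\sim c\,\varphi(\lambda^{-1})$, and then invert; your constant bookkeeping $c^{1/(\alpha-1)}=\beta^{\beta}/(\beta-1)$ is right, and you correctly identify that the interchange of $\lambda\to\infty$ with the summation is the one point needing justification (Potter bounds or the uniform convergence theorem for $\Pi$-type limits do supply the needed uniform geometric domination, using $1-\alpha<0$). The one step you elide is the very last one: your route produces only the asymptotics of $M$, i.e.\ (\ref{20}), and passing from (\ref{20}) to (\ref{19}) requires a further application of the monotone density theorem (legitimate here because $m$ is the nondecreasing density of $M$ and $1/\varphi^{-1}(-x)$ is regularly varying at $0$ of nonzero index $1-\beta$); the paper also treats (\ref{19}) and (\ref{20}) as interchangeable without comment, so this is a presentational rather than a substantive gap, but you should name the tool. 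In exchange for that extra step, your approach is arguably more conceptual — it is exactly the statement that a function in the de Haan class with auxiliary function of negative index differs from its finite limit by an asymptotic multiple of the auxiliary function — whereas the paper's is more hands-on and keeps the density $m$ visible from the start.
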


\begin{proof}
Assume (\ref{17}). Since $M^{-1}\left(  x\right)  $ has a monotone density%
\[
\left(  M^{-1}\left(  x\right)  \right)  ^{\prime}=\frac{1}{m\left(
M^{-1}\left(  x\right)  \right)  },
\]
the monotone density theorem implies%
\[
\lim_{\lambda\rightarrow\infty}\left(  \frac{M^{-1}\left(  \lambda x\right)
-M^{-1}\left(  \lambda\right)  }{\varphi\left(  \lambda^{-1}\right)  }\right)
^{\prime}=\left(  \left(  \alpha-1\right)  ^{-1}\alpha^{\alpha}\left(
1-x^{-\left(  \alpha-1\right)  }\right)  \right)  ^{\prime},
\]
which is%
\[
\lim_{\lambda\rightarrow\infty}\frac{\lambda}{\varphi\left(  \lambda
^{-1}\right)  m\left(  M^{-1}\left(  \lambda x\right)  \right)  }%
=\alpha^{\alpha}x^{-\alpha}.
\]
Setting $x=1$ and $u=M^{-1}\left(  \lambda\right)  ,$ we have%
\[
\lim_{u\rightarrow0}\frac{M(u)}{\varphi\left(  M(u)^{-1}\right)  m\left(
u\right)  }=\alpha^{\alpha}.
\]
For any $\epsilon>0$ there exists $\delta>0$ such that for any $u\in\left(
-\delta,0\right)  $%
\[
\alpha^{-\alpha}-\epsilon\leq\frac{\varphi\left(  M(u)^{-1}\right)  m\left(
u\right)  }{M(u)}\leq\alpha^{-\alpha}+\epsilon
\]
are valid. Noting $m\left(  u\right)  =M(u)^{\prime},$ we see%
\[
\left(  \alpha^{-\alpha}-\epsilon\right)  \left(  -x\right)  \leq%
{\displaystyle\int\nolimits_{x}^{0}}
\frac{\varphi\left(  M(u)^{-1}\right)  }{M(u)}dM\left(  u\right)  \leq\left(
\alpha^{-\alpha}+\epsilon\right)  \left(  -x\right)
\]
for any $x\in\left(  -\delta,0\right)  ,$ hence%
\[
\left(  \alpha^{-\alpha}-\epsilon\right)  \left(  -x\right)  \leq%
{\displaystyle\int\nolimits_{M\left(  x\right)  }^{\infty}}
\frac{\varphi\left(  y^{-1}\right)  }{y}dy=%
{\displaystyle\int\nolimits_{0}^{M\left(  x\right)  ^{-1}}}
\frac{\varphi\left(  z\right)  }{z}dz\leq\left(  \alpha^{-\alpha}%
+\epsilon\right)  \left(  -x\right)  .
\]
Since $\varphi\left(  z\right)  /z$ is a regularly varying function at $0$
with exponent $\alpha-2,$%
\[%
{\displaystyle\int\nolimits_{0}^{y}}
\frac{\varphi\left(  z\right)  }{z}dz\thicksim\frac{\varphi\left(  y\right)
}{\alpha-1}\text{ \ as }y\downarrow0
\]
is valid, which implies%
\[
\frac{\varphi\left(  M\left(  x\right)  ^{-1}\right)  }{\alpha-1}%
\thicksim\alpha^{-\alpha}\left(  -x\right)  \text{ \ \ as }x\uparrow0,
\]
hence%
\[
M(x)\thicksim\frac{\beta^{\beta}}{\beta-1}\varphi^{-1}\left(  -x\right)
^{-1}\text{\ \ as }x\uparrow0.
\]
This is equivalent to (\ref{19}).
\end{proof}

In the above two propositions we stated the conditions which should be
satisfied by $m\in\mathcal{E}$ in terms of its spectral function $\sigma.$ It
may be preferable to describe the result by $m$ itself directly. To do so,
unfortunately we have to impose a more restrictive condition on $m,$ and
combining Proposition \ref{p1} and Proposition \ref{p2} we have

\begin{theorem}
\label{t7}Let $\alpha\geq2$, $k>\alpha-1$ and $\varphi$ is a regularly varying
function at $0$ with exponent $\alpha-1.$ Let $m\in\mathcal{E}$ be a
non-decreasing function with $l=0$ and $M\left(  0\right)  =\infty.$ Assume
$m$ satisfies%
\[
\int\nolimits_{-\infty}^{-1}M(x)^{k}dx<\infty.
\]
Then, the property%
\[
p\left(  t\right)  \thicksim\frac{\alpha^{2\alpha}}{\Gamma\left(
1+\alpha\right)  }\frac{1}{t}\varphi\left(  \frac{1}{t}\right)  \text{ \ as
\ }t\rightarrow\infty
\]
holds if and only if the asymptotics below is valid.%
\[
m(x)\thicksim\frac{\beta^{\beta}}{-x\varphi^{-1}\left(  -x\right)  }\text{
\ \ as \ }x\uparrow0.
\]

\end{theorem}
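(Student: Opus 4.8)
The whole statement should follow by feeding the two propositions of this section the scale function $\phi(t)=t^{k}$ and reading the hypothesis $\int_{-\infty}^{-1}M(x)^{k}\,dx<\infty$ as membership of $m$ in $\mathcal{E}_{\phi}$. So the first step is to record that $\phi(t)=t^{k}$ is admissible: since $k>\alpha-1\geq1$ it is strictly increasing, convex, satisfies $\phi(0)=0$ and $\phi^{\prime}(1-)=k<\infty$, hence obeys $(S.1)$, while $\phi(st)=t^{k}\phi(s)$ gives the bound $(\ref{29})$ with constant $1$ and with the required exponent $k>\alpha-1$. Because $M$ is finite and continuous on $(-\infty,0)$ and $l=0$, the assumed bound $\int_{-\infty}^{-1}M(x)^{k}\,dx<\infty$ is the same as $\int_{-\infty}^{a}\phi(M(x))\,dx<\infty$ for $a\in(l_{-},0)$, that is, $m\in\mathcal{E}_{\phi}$.

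The two propositions require two auxiliary facts about $\sigma$: the finiteness $(\ref{16})$ (needed in both) and the integrability $(\ref{25})$, i.e. $\int_{0}^{1}p(t)\phi(t)\,dt<\infty$ (needed in Proposition \ref{p2}). Both ought to be consequences of $m\in\mathcal{E}_{\phi}$: by the equivalence proved in Section 5, $m\in\mathcal{E}_{\phi}$ forces $\sigma\in\mathcal{S}_{\phi}$, which is exactly $(\ref{25})$; and for $\phi(t)=t^{k}$ the relation $\sigma\in\mathcal{S}_{\phi}$ reads $\int_{1}^{\infty}\xi^{-(k+1)}\,d\sigma(\xi)<\infty$, from which an integration by parts yields $\sigma([1,R])=o(R^{k+1})$, so $\sigma$ grows at most polynomially and $p(t)=\int_{0}^{\infty}e^{-t\xi}\,d\sigma(\xi)<\infty$ for every $t>0$, giving $(\ref{16})$. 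The delicate point, and the one I expect to be the main obstacle, is that the forward half of the Section 5 equivalence runs through Lemma \ref{l11}, whose proof invokes Lemma \ref{l15} under the standing assumption $(\ref{16})$ --- precisely what we are trying to produce. To break this circularity I would truncate the string: put $m_{\epsilon}=m$ on $(-\infty,-\epsilon)$ and $m_{\epsilon}=\infty$ on $[-\epsilon,\infty)$, so that $l_{\epsilon}=-\epsilon$ and $M_{\epsilon}(l_{\epsilon})=M(-\epsilon)<\infty$; for such a regular string $(\ref{16})$ is automatic (its spectral measure is atomic with atoms $\mu_{n}\to\infty$, $\sum_{n}\mu_{n}^{-1}<\infty$ by $(\ref{35})$), so Lemma \ref{l11}$(2)$ applies and, since $M_{\epsilon}=M$ on $(-\infty,-\epsilon)$, bounds $\int_{0}^{\infty}p_{\epsilon}(t)\phi(t)e^{\lambda t}\,dt$ for each fixed $\lambda<0$ by a constant independent of $\epsilon<1$, whence $\int_{0}^{1}p_{\epsilon}(t)\phi(t)\,dt$ is uniformly bounded. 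Letting $\epsilon\downarrow0$ one checks $m_{\epsilon}\to m$ in $\mathcal{E}$ (conditions (A),(B) hold with the supremum taken over small $\epsilon$), so Theorem \ref{t9} gives $\sigma_{\epsilon}\to\sigma$, and Fatou's lemma transfers the uniform bound to $\sigma$, establishing $(\ref{25})$ and hence $(\ref{16})$.

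With $(\ref{16})$ and $(\ref{25})$ in hand the equivalence splits cleanly into the two propositions. For the implication $(\ref{19})\Rightarrow(\ref{24})$, i.e. from $m(x)\thicksim\beta^{\beta}/(-x\varphi^{-1}(-x))$ to the stated asymptotics of $p$, I would apply Proposition \ref{p1}, whose second assertion is exactly this implication once $(\ref{16})$ is available. For the converse $(\ref{24})\Rightarrow(\ref{19})$ I would apply Proposition \ref{p2}: its hypotheses $l=0$ and $M(0)=\infty$ are assumed in the theorem, $\phi(t)=t^{k}$ meets $(S.1)$ and $(\ref{29})$, and $(\ref{16})$, $(\ref{25})$ have just been secured, so $(\ref{24})$ (equivalently $(\ref{23})$) yields $(\ref{19})$. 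Thus the genuine content of the argument is not the two limiting asymptotics themselves --- those are delivered by the propositions --- but the extraction of the finiteness $(\ref{16})$ and the integrability $(\ref{25})$ from the single moment bound on $M$, which is where the truncation-and-limit step of the second paragraph does the work.
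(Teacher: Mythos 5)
Your proposal follows the same route as the paper, whose entire proof is the observation that $\phi(t)=t^{k}$ meets the requirements of Propositions \ref{p1} and \ref{p2}, so the theorem is immediate from those two results. The extra work in your second paragraph --- deriving $(\ref{16})$ and $(\ref{25})$ from the moment bound on $M$ via the Section 5 equivalence, and the truncation argument to avoid assuming $p(t)<\infty$ before it is proved --- is a legitimate and careful filling-in of a step the paper treats as immediate, not a departure from its method.
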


\begin{proof}
The proof is immediate from the above two propositions if we observe
$\phi\left(  t\right)  =t^{k}$ satisfies all the requirements needed in
Proposition \ref{p2}.
\end{proof}

\noindent\textbf{Acknowledgement.} \textit{The author would like to express
his hearty thanks to Professor Y.Kasahara who allowed him to read a preprint,
which was very helpful in the course of proving Proposition \ref{p2} and Lemma
\ref{l12}}.


\begin{thebibliography}{9}                                                                                                %


\bibitem {ka}Y.Kasahara. Spectral theory of generalized second order
differential operators and its application to Markov processes, Japan. J.
Math.\textbf{ 1} (1975), 67-84

\bibitem {k-w}Y.Kasahara and S.Watanabe. Brownian representation of a class of
L\'{e}vy processes and its application to occupation times of diffusion
processes, Illinois J. Math., 50 (2006), 515-539

\bibitem {k-w2}Y.Kasahara and S.Watanabe. Remarks on Krein-Kotani's
correspondence between strings and Herglotz functions, Proc. Japan. Acad. 85
Ser.A (2009), 22-26

\bibitem {k-w3}Y.Kasahara and S.Watanabe. Asymptotic behavior of spectral
measures and Krein's and Kotani's strings, Kyoto Journal of Math. 50 (2010), 623-644

\bibitem {k1}S.Kotani. On a generalized Sturm-Liouville operator with a
singular boundary, J. Math. Kyoto Univ. \textbf{15} (1975), 423-454

\bibitem {k2}S.Kotani. A remark to the ordering theorem of L.de Branges, J.
Math. Kyoto Univ. \textbf{16} (1976), 665-674

\bibitem {k3}S.Kotani. Krein's strings with singular left boundary, Rep. Math.
Phys. 59 (2007), 305-316

\bibitem {kr1}M.G.Krein. On a generalization of investigation of Stieltjes,
Dokl. Akad. Nauk SSSR \textbf{87} (1952), 881-884 (Russian)

\bibitem {yano}K.Yano. Excursion measure away from an exit boundary of
one-dimensional diffusion processes, Publ. RIMS \textbf{42}, (2006), 837-878
\end{thebibliography}
\end{document}